\definecolor{colComments}{rgb}{1,0,0}
\theoremstyle{plain}  
\numberwithin{equation}{section} 
\newtheorem{Definition}[equation]{Definition} 
\newtheorem{Definition/Lemma}[equation]{Definition/Lemma}
\newtheorem{Lemma}[equation]{Lemma}
\newtheorem{Theorem}[equation]{Theorem}
\newtheorem{Proposition}[equation]{Proposition}
\newtheorem{Corollary}[equation]{Corollary}
\newtheorem{Remark}[equation]{Remark}
\newtheorem{Notation}[equation]{Notation}
\newtheorem{Notation/Lemma}[equation]{Notation/Lemma}
\newtheorem{Comparison}[equation]{Comparison}
\font\triangles=beta
\newcommand{\Squar}{\mbox{\triangles 3}}
\newcommand{\UR}{\mbox{\triangles 1}}
\newcommand{\UP}{\mbox{\triangles 2}}
\font\trianglesb=betab
\newcommand{\Squarb}{\mbox{\trianglesb 3}}
\newcommand{\URb}{\mbox{\trianglesb 1}}
\newcommand{\UPb}{\mbox{\trianglesb 2}}
\title{A supercharacter theory for Sylow $p$-subgroups\\
         of the Steinberg triality groups
         }
\author{Yujiao Sun}
\affil{\footnotesize School of Mathematics and Statistics,
         Beijing Institute of Technology,\\
      Beijing 100081, PR China}
\affil{\footnotesize E-mail:
           yujiaosun@bit.edu.cn}
\date{}
\begin{document}
\maketitle

\begin{abstract}
We determine a supercharacter theory for the matrix Sylow $p$-subgroup ${^3}D_4^{syl}(q^3)$
of the Steinberg triality group ${^3}D_4(q^3)$, and establish the supercharacter table
of ${^3}D_4^{syl}(q^3)$.
\end{abstract}

{\small \textbf{Keywords: }supercharacter theory; monomial linearisation;
                           Sylow $p$-subgroup}

{\small \textbf{Mathematics Subject Classification: }Primary 20C15, 20D15. Secondary 20C33, 20D20}


\section{Introduction}

Let $p$ be a fixed prime number,
$\mathbb{N}^*$ the set $\{1,2,3,\dots\}$ of positive integers,
$q:=p^k$ with a fixed $k\in \mathbb{N}^*$,
$\mathbb{F}_q$ the finite field with $q$ elements,
and $A_n(q)$ ($n\in \mathbb{N}^*$) the group of upper unitriangular $n\times n$-matrices
over $\mathbb{F}_q$.
Thus $A_n(q)$ is a Sylow $p$-subgroup
of the general linear group $GL_n(q)$
and also a Sylow $p$-subgroup \cite{Carter1}
of the Chevalley group of Lie type $A_{n-1}$ over $\mathbb{F}_q$ if $n \geq 2$.
Classifying the conjugacy classes of $A_n(q)$ and hence the complex irreducible characters
is known to be a ``wild'' problem, see e.g. \cite{Evs2011, ps, vla}.


The notion of {\it supercharacter theory} (see \ref{supercharacter theory}) for an arbitrary finite group
was introduced by P. Diaconis and I.M. Isaacs \cite{di},
which is a coarser approximation of the character theory.
\begin{itemize}
\item A supercharacter theory replaces irreducible characters by {\it supercharacters}.
Supercharacters are pairwise orthogonal.
Each irreducible character is a constituent of precisely one supercharacter.
\item A supercharacter theory replaces conjugacy classes by {\it superclasses}.
    The number of different supercharacters equals the number of superclasses.
Supercharacters are constant on superclasses.
\item A supercharacter theory replaces irreducible modules by {\it supermodules}.
\item A {\it supercharacter table} is constructed as a replacement for the character table.
\end{itemize}
C.A.M. Andr\'{e} \cite{and1} using the Kirillov orbit method,
and later but independently N. Yan \cite{yan2} using a more elementary method
determined a supercharacter theory for $A_n(q)$,
the {\it Andr\'{e}-Yan supercharacter theory},
which has beautiful combinatorial properties.
P. Diaconis and I.M. Isaacs \cite{di} extended
this theory to so-called algebra groups.
The supercharacter theory for $A_n(q)$ is based on
the observation that $u\mapsto u-1$ defines a bijection from $A_n(q)$
to an $\mathbb{F}_q$-vector space of nilpotent upper triangular matrices
which is the Lie algebra associated with $A_n(q)$.
Unfortunately, this does not work in general for Sylow $p$-subgroups
of the other finite groups of Lie type.

C.A.M. Andr\'{e} and A.M. Neto studied in \cite{an1,an2, an3} supercharacter theories
for Sylow $p$-subgroups of untwisted Chevalley groups of types $B_n$, $C_n$ and $D_n$
which are finite classical groups of untwisted Lie type.
C.A.M. Andr\'{e}, P.J. Freitas and A.M. Neto \cite{AFM2015}
extended in a uniform way the construction of \cite{an1, an3} to
Sylow $p$-subgroups of
finite classical groups of untwisted Lie type.
This has been extended by S. Andrews \cite{Andrews2015, Andrews2016}
to unitary groups as well.
Supercharacters of those groups arise as restrictions of supercharacters of overlying
full upper unitriangular groups $A_N(q)$ to the Sylow $p$-subgroups of classical type,
and superclasses arise as intersections of superclasses of $A_N(q)$ with these groups.

As mentioned above the Andr\'{e}-Yan supercharacter theory for $A_n(q)$
relies on the map $u\mapsto u-1$ from $A_n(q)$ onto the Lie algebra.
M. Jedlitschky  generalised this by a procedure called
{\it monomial linearisation}  (see \cite[\S 2.1]{Markus1}) for a finite group,
and decomposed {Andr\'{e}-Neto supercharacters} for Sylow $p$-subgroups
of Lie type $D$ into much smaller characters \cite{Markus1}.
The smaller characters are determined by certain monomial transitive representations
of the group,
which are pairwise orthogonal
and have the property that each irreducible character is a constituent of exactly one of those.
Thus these characters look like supercharacters for a much finer supercharacter theory
for the Sylow $p$-subgroups of Lie type $D$.
One may ask, if there exist corresponding superclasses producing
a full supercharacter theory
for the group which is much finer than the one defined by Andr\'{e} and Neto.
So far there are no corresponding finer superclasses for the Sylow $p$-subgroups
of type $D$.
On the other hand, the Andr\'{e}-Yan supercharacters for $A_n(q)$ are special cases
of Jedlitschky's construction.
Thus one might ask if there is a supercharacter theory for
Sylow $p$-subgroups of all finite groups of Lie type
behind this specialising to the Andr\'{e}-Yan theory in type $A$
and to the characters defined by Jedlitschky in type $D$.

Finite groups of exceptional Lie type are series of matrix groups of fixed size,
and only the prime power $q$
of the underlying field is varied.
Hence it seems to be reasonable to try the exceptional types first,
apply Jedlitschky's monomial linearisation  to obtain supercharacters and then supplement it
to construct superclasses as well in order to exhibit a full supercharacter theory.
This will be done in this paper in the special case of the twisted Lie type $^3D_4$:
the Sylow $p$-subgroup ${{^3D}_4^{syl}}(q^3)$
of the Steinberg triality group ${{D}_4^{syl}}(q^3)$.
For this group, irreducible characters have been classified by T. Le \cite{Le}
and their character tables have been given by the author explicitly in \cite{sun2016arxiv}.
In particular, as opposed to the case $A_n(q)$,
the problem of determining the irreducible characers for all $q$ is not ``wild''.
Thus for ${{^3D}_4^{syl}}(q^3)$ we can in addition explicitly
decompose the supercharacters into irreducible characters.
The method of this paper seems to work for more exceptional Lie types,
indeed in forthcoming papers we shall obtain similar results
for the cases of type $G_2$ and twisted type $^2G_2$.
Thus we have some evidence that there is indeed a general supercharacter theory
for all Lie types behind this.

For the matrix Sylow $p$-subgroup ${{^3D}_4^{syl}}(q^3)$ (see Section \ref{sec:3D4-1-3})
of the Steinberg triality group we shall proceed as follows:
\begin{itemize}
 \item [1.] {\it Determine a monomial module by constructing a monomial linearisation:}
            Determine an intermediate algebra group $G_8(q^3)$ which is between ${{^3D}_4^{syl}}(q^3)$
            and the overlying full upper unitriangular group $A_8(q^3)$ (see Section \ref{sec:bigger group}),
            then construct a monomial linearisation for $G_8(q^3)$
            and obtain a monomial $G_8(q^3)$-module $\mathbb{C}\left({{^3D}_4^{syl}}(q^3) \right)$
            (see Section \ref{sec: monomial 3D4-module}).
 \item [2.] {\it Establish supercharacters of ${{^3D}_4^{syl}}(q^3)$ by decomposing monomial ${{^3D}_4^{syl}}(q^3)$-modules:}
            Every supercharacter is afforded by a direct sum of
            some ${{^3D}_4^{syl}}(q^3)$-orbit modules
            which is also a direct sum of restrictions of certain $G_8(q^3)$-orbit modules
            to ${{^3D}_4^{syl}}(q^3)$
            (see Sections \ref{sec:U-orbit modules-3D4}, \ref{sec:homom. between orbit modules-3D4}
             and \ref{sec: supercharacter theories-3D4}).
 \item [3.] {\it Calculate the superclasses using the intermediate group $G_8(q^3)$:}
            Every superclasses is a union of some intersections of biorbits of $G_8(q^3)$ and ${{^3D}_4^{syl}}(q^3)$,
            i.e. $\{I_8+g(u-I_8)h\mid g, h \in G_8(q^3)\}\cap {{^3D}_4^{syl}}(q^3)$ for all $u\in {{^3D}_4^{syl}}(q^3)$,
            where $I_8$ is the identity element of ${{^3D}_4^{syl}}(q^3)$
            (see Sections \ref{partition of U-3D4} and \ref{sec: supercharacter theories-3D4}).
\end{itemize}
At the end of every section,
we compare the properties of $A_n(q)$, $D_n^{syl}(q)$ and ${^3}D_4^{syl}(q^3)$.

We mention that supercharacter theories have proven to raise other questions in particular concerning algebraic combinatorics.
For instance, A.O.F. Hendrickson in \cite{Hend}
discovered the relation between supercharacter theories and Schur rings.

We fix some notation:
Let
$p$ be a fixed odd prime,
$q$ a fixed power of $p$,
$K$ a field,
$K^*$ the multiplicative group $K\backslash\{0\}$ of $K$,
$K^+$ the additive group of $K$,
$\mathbb{F}_q$ the finite field with $q$ elements,
$\mathbb{F}_{q^3}$ the finite field with $q^3$ elements,
$\mathbb{C}$ the complex field,
$\mathbb{Z}$ the set of all integers,
$\mathbb{N}$ the set $\{0,1,2, \dots \}$ of all non-negative integers,
$\mathbb{N}^*$ the set $\{1,2,\dots \}$ of all positive integers.
Let $\mathrm{Mat}_{8 \times 8}(K)$
be the set of all $8\times 8$ matrices with entries in the field $K$,
the {general linear group}
${GL}_8(K)$ be the subset of $\mathrm{Mat}_{8\times 8}(K)$
consisting of all invertible matrices.
For $m\in \mathrm{Mat}_{8\times 8}(K)$, let  $m:=(m_{i,j})$,
where $m_{i,j}\in K$ denotes the $(i,j)$-entry of $m$ (the entry in the $i$-th row and $j$-th column).
For simplicity, we write $m_{ij}:=m_{i,j}$ if there is no ambiguity.
Denote by $e_{i,j}\in \mathrm{Mat}_{8\times 8}(K)$ the matrix unit
with $1$ in the $(i,j)$-position and $0$ elsewhere.
Denote by $A^{\top}$ the {transpose} of $A\in \mathrm{Mat}_{8\times 8}(K)$.
Let $O_8$ be the zero $8 \times 8$-matrix $O_{8\times 8}$,
and $1$ denote the identity element of a finite group.

\section{Sylow $p$-subgroup ${^3}D_4^{syl}(q^3)$ of the Steinberg triality group}
\label{sec:3D4-1-3}
In this section,
we exhibit a matrix Sylow $p$-subgroup $D_4^{syl}(q)$ of the Chevalley group $D_4(q)$ of type $D_4$.
The Steinberg triality group ${^3}D_4(q^3)$
is generated by the fixed elements
under an automorphism of $D_4(q^3)$  of order 3
which arises from a field automorphism of $D_4(q^3)$ and a graph automorphism of $D_4(q^3)$.
The Sylow $p$-subgroup ${^3}D_4^{syl}(q^3)$
of ${^3}D_4(q^3)$
is generated by the fixed elements of $D_4^{syl}(q^3)$ under the above mentioned automorphism.
The main references are
\cite{Carter1, Carter2005}.

Let
${J_{8}^+}:=
\sum_{i=1}^8{e_{i,9-i}}\in {GL}_{8}(\mathbb{C})$.
Then $
\{A \in \mathrm{Mat}_{8\times 8}(\mathbb{C}) \mid  A^\top J_{8}^+ + J_{8}^+ A=0 \}
$
forms the complex simple Lie algebra of type $D_4$.
For $1\leq i\leq 4$, let
$h_i:= e_{i,i}-e_{9-i,9-i} \in \mathrm{Mat}_{8\times 8}(\mathbb{C})$.
Then a Cartan subalgebra of $\mathcal{L}_{D_4}$ is
$\mathcal{H}_{D_4}= \{\sum_{i=1}^4{\lambda_ih_i}  \mid \lambda_i\in \mathbb{C}\}$.
Let  $\mathcal{H}_{D_4}^*$ be the dual space of $ \mathcal{H}_{D_4}$,
and $h:=\sum_{i=1}^4{\lambda_ih_i}$.
For $1 \leq i \leq 4$,
let $\varepsilon_i \in \mathcal{H}_{D_4}^*$
be defined by $\varepsilon_i(h)=\lambda_i$ for all $i=1,2,3,4$.
Let $\mathcal{V}_4:=\mathcal{V}_{D_4}$
be a $\mathbb{R}$-vector subspace of $\mathcal{H}_{D_4}^*$
spanned by $\{ h_i \mid i=1,2,3,4\}$.
Then $\mathcal{V}_4$ becomes a Euclidean space (see \cite[\S 5.1]{Carter2005}).
The set
$\Phi_{D_4}=\{\pm\varepsilon_i\pm\varepsilon_j  \mid  1\leq i< j\leq 4\}$
is a root system of type ${D}_4$.
The fundamental system of roots of the root system $\Phi_{D_4}$ is
$\Delta_{D_4}
=\{\varepsilon_1-\varepsilon_2,\ \varepsilon_2-\varepsilon_3,
\ \varepsilon_3-\varepsilon_4,\ \varepsilon_3+\varepsilon_4\}$.
The {positive system of roots} of $\Phi_{D_4}$ is
$\Phi_{D_4}^+:=\{\varepsilon_i\pm\varepsilon_j  \mid  1 \leq i< j\leq4\}$.
We choose the following Chevalley basis of $\mathcal{L}_{D_4}$
to keep the description of the Steinberg triality group
as simple as possible.
\begin{Lemma}[Chevalley basis of $\mathcal{L}_{D_4}$]\label{Chevalley, D4}
Let
$ r_1:=\varepsilon_1-\varepsilon_2$,
$r_2:=\varepsilon_2-\varepsilon_3$,
$r_3:=\varepsilon_3-\varepsilon_4$,
$ r_4:=\varepsilon_3+\varepsilon_4$,
$ r_5:= r_1+r_2$,
$ r_6:= r_2+r_3$,
$ r_7:= r_2+r_4$,
$ r_8:= r_1+r_2+r_3$,
$ r_{9}:= r_1+r_2+r_4$,
$ r_{10}:= r_2+r_3+r_4$
$ r_{11}:= r_1+r_2+r_3+r_4$,
and
$ r_{12}:= r_1+2r_2+r_3+r_4$.
Then the Lie algebra $\mathcal{L}_{D_4}$ has
a Chevalley basis $\{h_r \mid r\in \Delta_{D_4}\}\cup \{ e_{\pm r} \mid r\in \Phi_{D_4}^+\}$,
where
\begin{alignat*}{4}
& e_{r_1}:=e_{1,2}-e_{7,8},
& \quad
& e_{r_2}:=e_{2,3}-e_{6,7},
& \quad
& e_{r_3}:=e_{3,4}-e_{5,6},
& \quad
& e_{r_4}:=e_{3,5}-e_{4,6},\\
& e_{r_5}:={-}(e_{1,3}-e_{6,8}),
& \quad
& e_{r_6}:=e_{2,4}-e_{5,7},
& \quad
& e_{r_7}:=e_{2,5}-e_{4,7},
& \quad
& e_{r_8}:=e_{1,4}-e_{5,8},\\
& e_{r_{9}}:=e_{1,5}-e_{4,8},
& \quad
& e_{r_{10}}:=e_{2,6}-e_{3,7},
& \quad
& e_{r_{11}}:=e_{1,6}-e_{3,8},
& \quad
& e_{r_{12}}:=e_{1,7}-e_{2,8},
\end{alignat*}
and $e_{-r}:=e_r^\top$ for all $r\in \Phi_{D_4}^+$,
$h_r:=[e_r,e_{-r}]=e_re_{-r}-e_{-r}e_r$ for all  $r\in \Delta_{D_4}$.
\end{Lemma}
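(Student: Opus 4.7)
The plan is to verify directly the defining properties of a Chevalley basis of $\mathcal{L}_{D_4}$ (see \cite[\S 4.2]{Carter2005}): that $\{h_{r_i} : 1 \leq i \leq 4\}$ is a basis of $\mathcal{H}_{D_4}$; that each $e_r$ is a nonzero element of $\mathcal{L}_{D_4}$ lying in the root space $\mathcal{L}_r$; and that for $r, s \in \Phi_{D_4}$ with $r + s \in \Phi_{D_4}$ the commutator $[e_r, e_s]$ equals $N_{r,s}\, e_{r+s}$ with $N_{r,s} \in \mathbb{Z}$, while $h_r := [e_r, e_{-r}]$ is an integer combination of $h_{r_1}, \dots, h_{r_4}$.

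The first two parts are routine. Membership of each $e_r$ in $\mathcal{L}_{D_4}$ amounts to $e_r^{\top} J_{8}^+ + J_{8}^+ e_r = 0$; because $J_{8}^+$ is the anti-diagonal identity, this condition says that whenever $e_{a,b}$ appears in $e_r$ with coefficient $c$, the matrix unit $e_{9-b,\, 9-a}$ appears with coefficient $-c$, and each $e_{r_i}$ in the list is visibly of this form (for instance $e_{r_1} = e_{1,2} - e_{7,8}$ pairs $(a,b) = (1,2)$ with $(9-b,\, 9-a) = (7,8)$). Membership in the correct root space then follows from the formula $[h_i, e_{a,b}] = (\delta_{i,a} - \delta_{i,9-a} - \delta_{i,b} + \delta_{i,9-b})\, e_{a,b}$, applied to each matrix unit appearing in $e_r$, which shows that all its summands share the common weight $r$.

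The main obstacle is verifying the bracket relations with the correct signs. Since $D_4$ is simply-laced, every root string has length one and $|N_{r,s}| = 1$ whenever $r + s \in \Phi_{D_4}$; the entire task reduces to pinning down signs. The sole \emph{unusual} minus sign in the statement, $e_{r_5} = -(e_{1,3} - e_{6,8})$, is precisely what is required to make $[e_{r_1}, e_{r_2}] = +\, e_{r_5}$ give the Chevalley constant $N_{r_1, r_2} = +1$: the direct matrix bracket of $e_{r_1} = e_{1,2} - e_{7,8}$ and $e_{r_2} = e_{2,3} - e_{6,7}$ evaluates to $e_{1,3} - e_{6,8}$, hence the compensating sign on $e_{r_5}$. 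I would organise the remaining verification by the height of $r + s$, proceeding upward from height $2$ (brackets of simple roots) to height $5$ (brackets producing the highest root $r_{12}$); each bracket is either computed directly from the two sparse matrices involved or obtained from already-verified lower-height brackets via the Jacobi identity. Finally, since $e_{-r} = e_r^{\top}$, the identities $h_r = [e_r, e_{-r}]$ reduce to brief matrix computations whose diagonal outputs agree with the expected integer combinations of the $h_{r_i}$.
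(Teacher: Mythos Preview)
The paper states this lemma without proof, so there is nothing to compare against; your plan of direct verification is the natural and correct approach.

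There is, however, a slip in your sign discussion. You write that the minus sign in $e_{r_5}=-(e_{1,3}-e_{6,8})$ is ``precisely what is required to make $[e_{r_1},e_{r_2}]=+\,e_{r_5}$''. In fact your own matrix computation gives $[e_{r_1},e_{r_2}]=e_{1,3}-e_{6,8}=-e_{r_5}$, so $N_{r_1,r_2}=-1$, not $+1$. This does not damage the verification, since a Chevalley basis only requires $N_{r,s}=\pm 1$ in the simply-laced case; but your stated motivation for the extra sign is backwards. The genuine reason the author inserts this sign is visible just after the lemma: the basis is chosen so that the triality graph automorphism $\rho$ (sending $r_1\mapsto r_3\mapsto r_4\mapsto r_1$, $r_5\mapsto r_6\mapsto r_7\mapsto r_5$, etc.) acts by $e_r\mapsto e_{\rho(r)}$ with all coefficients $\gamma_r=1$. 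Without the minus on $e_{r_5}$ one would pick up $\gamma_{r_5}=-1$ when comparing $e_{r_5}$ with $e_{r_6}=e_{2,4}-e_{5,7}$ under the symmetry, and the clean description of ${}^3D_4^{syl}(q^3)$ as a fixed-point set would be spoiled. If you want to explain the sign, that is the explanation to give.
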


Set the matrix group
$\bar{D}_4(q):=\left<{\,} \exp(t{\,e_r}) {\,}\middle|{\,}  r\in \Phi_{D_4},{\ } t\in \mathbb{F}_q {\,}\right>$.
The Chevalley group ${D}_4(q)$ of type $\mathcal{L}_{D_4}$ over the field $\mathbb{F}_q$
is denoted by
${D}_4(q):=\left<{\,} \exp(t{\,}\mathrm{ad}{\,e_r}) {\,}\middle|{\,}  r\in \Phi_{D_4},{\ } t\in \mathbb{F}_q {\,}\right>$.
By the adaption of \cite[11.3.2]{Carter1},
$\bar{D}_4(q)$ is the commutator subgroup, denoted by $\Omega_{8}(q,f_D)$,
of the (positive) orthogonal group $O_{8}(q,f_D)$,
where $f_D$ is the quadratic form $x_1x_{8}+x_2x_{7}+x_3x_{7}+x_4x_{5}$.
By \cite[\S 11.3]{Carter1},
$D_4(q) \cong \bar{D}_4(q)/Z(\bar{D}_4(q))$,
and $D_4(q)$ is isomorphic to
the projective group
$P\Omega_{8}(q,f_D)=\Omega_{8}(q,f_D)/{Z(O_{8}(q,f_D))\cap \Omega_{8}(q,f_D)}$.
Set
$D_4^{syl}(q):=\left<{\,}\exp(te_r) {\,}\middle|{\,} r\in \Phi_{D_4}^+,{\ } t\in \mathbb{F}_q {\,}\right>$.
Since $D_4^{syl}(q)\cap Z(O_{8}(q,f_D))=\{I_8\}$,
$D_4^{syl}(q)$ is a Sylow $p$-subgroup of the Chevalley group ${D}_4(q)$.
Since $|{D}_4(q)|= \frac{1}{4}q^{12}(q^2-1)(q^4-1)(q^6-1)(q^4-1)$ (see \cite{Carter1965}),
$|D_4^{syl}(q)|=q^{12}$.
Comparing the orders of $O_{8}(q,f_D)$ and $P\Omega_{8}(q,f_D)$ (see \cite[Chapter 9]{Grove}),
$D_4^{syl}(q)$ is also a Sylow $p$-subgroup of $O_{8}(q,f_D)$.
We set
${x_r(t)}:= \exp(te_r)=I_{8}+t\cdot e_r$
for all $r\in \Phi_{D_4}$ and $t\in \mathbb{F}_q$,
and the root subgroups
${X_r}:= \{ {x_r(t)} \mid t\in {\mathbb{F}_q}\}$
for all $r\in \Phi_{D_4}$.

 Define the sets of matrix entry coordinates:
  $\Squar:= \{(i,j) \mid  1\leq i,j \leq 8 \}$,
  $\UR:= \{(i,j) \mid 1\leq i< j \leq 8 \}$
  and
  $\UP:= \{(i,j)\in \Squar  \mid  i < j < 9-i \}$.
The following result is well known.
\begin{Lemma} \label{pairwise diff-3D4}
For $(i,j)\in \UR$ and $t \in \mathbb{F}_q$,
set $\tilde{x}_{i,j}(t):=I_{n}+te_{i,j}$.
For $n\in \mathbb{N}^*$, we have
$A_{n}(q)=\Big\{\prod_{(i,j)\in \URb}{\tilde{x}_{i,j}(t_{i,j})}
 {\,}\Big|{\,}
 t_{i,j} \in \mathbb{F}_q \Big\}$,
where the product can be taken in an arbitrary, but fixed, order.
In particular,
$
\prod_{(i,j)\in \URb}{\tilde{x}_{i,j}(t_{i,j})}
=\prod_{(i,j)\in \URb}{\tilde{x}_{i,j}(s_{i,j})}
 \Longleftrightarrow
t_{i,j}=s_{i,j}
$
 for all $(i,j)\in \UR$.
\end{Lemma}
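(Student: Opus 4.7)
The plan is to exploit the fact that $|A_n(q)| = q^{n(n-1)/2} = q^{|\UR|}$, so that the evaluation map
\[
\phi \colon \mathbb{F}_q^{\UR} \longrightarrow A_n(q), \qquad
(t_{i,j})_{(i,j)\in \UR} \longmapsto \prod_{(i,j)\in \URb}{\tilde{x}_{i,j}(t_{i,j})}
\]
(product taken in the fixed order specified in the lemma) is a map between two finite sets of the same cardinality. Thus the whole lemma reduces to proving injectivity of $\phi$: once $\phi$ is injective it is automatically bijective, which simultaneously yields the set equality and the uniqueness of the coefficients.

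To establish injectivity I would expand the product using the basic identity $e_{i,j}e_{k,l}=\delta_{jk}\,e_{i,l}$. Writing $\tilde{x}_{i,j}(t_{i,j})=I_n+t_{i,j}e_{i,j}$ and multiplying out, every nonzero cross term is of the form
\[
t_{i_0,i_1}t_{i_1,i_2}\cdots t_{i_{k-1},i_k}\,e_{i_0,i_k},
\]
for some strictly increasing chain $i_0<i_1<\cdots<i_k$ with $k\geq 1$, and such a term actually contributes to the expansion only when the factors $\tilde{x}_{i_{m-1},i_m}$ appear in the fixed product order in the same left-to-right order as $m=1,2,\dots,k$. In particular, the $(i,j)$-entry of $\phi((t_{i,j}))$ has the shape
\[
\phi((t_{i,j}))_{i,j} \;=\; t_{i,j} \;+\; P_{i,j}\bigl(\{t_{k,l} : l-k < j-i\}\bigr),
\]
where $P_{i,j}$ is a polynomial depending only on entries of strictly shorter chains.

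From this triangular structure, injectivity follows by induction on $j-i$. For the base case $j-i=1$ there are no chains of length $\geq 2$ available, so $\phi((t_{i,j}))_{i,i+1}=t_{i,i+1}$ and the $t_{i,i+1}$ are recovered directly from the matrix. For the inductive step, given the matrix $\phi((t_{i,j}))$, one first reads off all $t_{k,l}$ with $l-k<j-i$ by the inductive hypothesis, and then recovers $t_{i,j}$ from the $(i,j)$-entry by subtracting the already determined polynomial $P_{i,j}$. This shows that the tuple $(t_{i,j})$ is uniquely determined by $\phi((t_{i,j}))$, completing the proof.

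The only delicate point I anticipate is the careful bookkeeping of which cross terms actually survive under a given, but arbitrary, ordering of the product; the rest is a counting argument plus induction. The key observation that makes the argument order-independent is that, regardless of the ordering, every surviving cross term in the $(i,j)$-entry involves only variables $t_{k,l}$ with $l-k<j-i$, so the induction on $j-i$ goes through uniformly.
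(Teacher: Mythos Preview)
Your argument is correct. The paper itself does not prove this lemma; it simply introduces it with ``The following result is well known'' and moves on, so there is no paper proof to compare against. Your approach---counting $|A_n(q)|=q^{|\UR|}$ to reduce bijectivity to injectivity, then reading off the $t_{i,j}$ by induction on $j-i$ via the triangular shape of the expansion---is a standard and clean way to establish this fact, and your observation that the cross terms in position $(i,j)$ only involve variables $t_{k,l}$ with $l-k<j-i$ (regardless of the chosen ordering) is exactly what makes the induction uniform in the ordering.
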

A good expression for each element of $D_4^{syl}(q)$ is obtained from the following result.
\begin{Lemma}[c.f. Lemma 17 of \cite{steinberg1968}]\label{sylow p-subg, D4}
$
 D_4^{syl}(q)=\{\prod_{r\in \Phi^+_{D_4}}{x_r(t_r)}
 {\,}|{\,}
 t_r \in \mathbb{F}_q \}$,
where the product can be taken in an arbitrary, but fixed, order.
In particular,
\begin{align*}
 &D_4^{syl}(q)=
 \left\{
 \begin{array}{l}
 x_{r_4}(t_{r_4})x_{r_3}(t_{r_3}) x_{r_{10}}(t_{r_{10}})x_{r_7}(t_{r_7})
  x_{r_6}(t_{r_6})x_{r_2}(t_{r_2}) \\
  \cdot x_{r_{12}}(t_{r_{12}})
   x_{r_{11}}(t_{r_{11}})x_{r_{9}}(t_{r_{9}})x_{r_{8}}(t_{r_{8}})
   x_{r_{5}}(t_{r_{5}})x_{r_{1}}(t_{r_{1}})
 \end{array}
   {\,}\middle|{\,}
  t_{r_i}\in \mathbb{F}_q, {\ } i=1,2,\dots, 12\right\}.
 \end{align*}
\end{Lemma}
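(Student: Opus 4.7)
The plan is to invoke the standard Chevalley--Steinberg structural result for unipotent radicals (Lemma 17 of \cite{steinberg1968}), specialised to type $D_4$ and to the particular linear ordering on $\Phi_{D_4}^+$ exhibited in the statement. The argument splits into existence of the expression, uniqueness of the coefficients, and verification that the specific listed order is admissible.

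The inclusion $\supseteq$ is immediate from the definition of $D_4^{syl}(q)$ as the group generated by the root subgroups $X_r$ for $r\in\Phi_{D_4}^+$. For $\subseteq$, I would start with an arbitrary word in the generators $x_r(t)$ and repeatedly apply the Chevalley commutator formula, which for the simply-laced type $D_4$ reads
\[
[x_r(t),\, x_s(u)] = \begin{cases} x_{r+s}(\pm tu), & r+s \in \Phi_{D_4}, \\ 1, & r+s \notin \Phi_{D_4}\cup\{0\}, \end{cases}
\]
for $r,s\in\Phi_{D_4}^+$ with $r\neq s$ (the case $r=-s$ cannot arise for two positive roots), together with the additive law $x_r(t)x_r(u)=x_r(t+u)$. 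Each swap of an out-of-order adjacent pair of factors either introduces no correction or a single correction $x_{r+s}(\cdot)$ whose root has strictly larger height; after bookkeeping and collecting multiple occurrences of each root via the additive law, the word is brought into the prescribed form.

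For uniqueness, existence produces a surjection
\[
\mathbb{F}_q^{12} \longrightarrow D_4^{syl}(q),\qquad (t_{r_1},\ldots,t_{r_{12}}) \longmapsto \prod_{i=1}^{12} x_{r_i}(t_{r_i})
\]
in the listed order. Since $|\mathbb{F}_q^{12}|=q^{12}=|D_4^{syl}(q)|$ has been established immediately before the lemma (from $|D_4(q)|$ of \cite{Carter1965} together with the triviality of $D_4^{syl}(q)\cap Z(O_8(q,f_D))$), the map must be a bijection, and the asserted uniqueness follows.

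The main obstacle is that the ordering listed in the statement is \emph{not} a linear extension of the height partial order on $\Phi_{D_4}^+$: for instance $r_2$ sits at position $6$ and $r_1$ at position $12$, while their sum $r_5=r_1+r_2$ occurs in between, at position $11$. A naive algorithm that ``pushes sums to the right'' therefore need not terminate in this particular order. I would circumvent this by first proving the bijection for some height-respecting order, where each ``tail'' subgroup $\langle X_r : r\text{ later than a fixed root}\rangle$ is normal and the inductive step is transparent, and then using the commutator relations to convert the canonical form for the height order into a product in the order of the statement. Combined with the cardinality equality $|\mathbb{F}_q^{12}|=|D_4^{syl}(q)|$, this forces the new parametrisation to be a bijection as well, so that the specific ordering given in the statement is admissible.
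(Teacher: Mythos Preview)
The paper does not supply its own proof of this lemma; it simply cites Lemma~17 of Steinberg's lecture notes and moves on. Your argument is precisely the standard proof underlying that citation---commutator relations plus a cardinality count---and your two-step handling of the non-height-compatible ordering (first prove the bijection for a height-respecting order, then rearrange via commutators and invoke $|\mathbb{F}_q^{12}|=|D_4^{syl}(q)|$ again) is exactly how one justifies the ``arbitrary, but fixed, order'' clause in full.
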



Now we determine the Sylow $p$-subgroup ${^3}D_4^{syl}(q^3)$ of the Steinberg triality group.
Let $\rho$ be a linear transformation of $\mathcal{V}_4$ into itself
arising from a non-trivial symmetry of the Dynkin diagram of $\mathcal{L}_{D_4}$
sending $r_1$ to $r_3$, $r_3$ to $r_4$, $r_4$ to $r_1$, and fixing $r_2$.
Then
$r_1 \stackrel{\rho}{\mapsto } r_3 \stackrel{\rho}{\mapsto } r_4$,
$r_2 \stackrel{\rho}{\mapsto } r_2$,
$r_{5} \stackrel{\rho}{\mapsto } r_{6} \stackrel{\rho}{\mapsto } r_{7}$,
$r_{8} \stackrel{\rho}{\mapsto } r_{10} \stackrel{\rho}{\mapsto } r_{9}$,
$r_{11} \stackrel{\rho}{\mapsto } r_{11}$,
$r_{12} \stackrel{\rho}{\mapsto } r_{12}$,
and $\rho^3=\mathrm{id}_{\mathcal{V}_4}$.
Thus $\rho(\Phi_{D_4})=\Phi_{D_4}$ (see \cite[12.2.2]{Carter1}).
Let an automorphism of the Lie algebra $\mathcal{L}_{D_4}$ be determined by:
$h_r\mapsto h_{\rho(r)}$,
$e_r\mapsto e_{\rho(r)}$,
$e_{-r}\mapsto e_{-\rho(r)}$
for all $r\in \Delta_{D_4}$,
and for every $r\in \Phi_{D_4}$ satisfying $e_r\mapsto \gamma_re_{\rho(r)}$.
For the Chevalley basis in \ref{Chevalley, D4},
we have
$\gamma_r=1$ for all $r\in \Phi_{D_4}$.

The Chevalley group $D_4(q^3)$ has a field automorphism $F_q$
sending $x_{r}(t)$ to $x_{r}(t^q)$,
and a graph automorphism $\rho$ sending $x_{r}(t)$ to $x_{\rho(r)}(t)$  $(r\in \Phi_{D_4})$
(see \cite[12.2.3]{Carter1}).
Let $F:=\rho {F_q}={F_q} \rho$.
For a subgroup $X$ of $D_4(q^3)$,
we set $X^F:=\{x\in X| F(x)=x\}$.
Then $D_4(q^3)^F={^3}D_4(q^3)$.
Let $r\in \Phi_{D_4}^+$ and $t\in \mathbb{F}_{q^3}$, set
\begin{align*}
x_{r^1}(t):=
 {\left\{
 \begin{array}{ll}
 x_r(t)& \text{if } \rho(r)=r,{\ }t^q=t\\
 x_r(t)\cdot x_{\rho(r)}(t^q)\cdot x_{\rho^2(r)}(t^{q^2}) & \text{if } \rho(r)\neq r,{\ }t^{q^3}=t\\
 \end{array}
 \right.}.
\end{align*}
Then a Sylow $p$-subgroup of $^3D_4(q^3)$ is determined
\begin{align*}
{}^3{D}_4^{syl}(q^3):
&=\left\{x_{r_2^1}(t_2)x_{r^1_1}(t_1)x_{r^1_5}(t_5)x_{r^1_8}(t_8)x_{r^1_{11}}(t_{11})x_{r^1_{12}}(t_{12})
{\,}\middle|{\,}
{\left\{\begin{array}{l}
t_1, t_5, t_8 \in {\mathbb{F}_{q^3}}\\
t_2, t_{11}, t_{12} \in {\mathbb{F}_{q}}
\end{array}
\right.}
\right\}.
\end{align*}
In particular, $|{}^3{D}_4^{syl}(q^3)|=q^{12}$,
since $|^3{D}_4(q^3)|= q^{12}(q^2-1)(q^6-1)(q^8+q^4+1)$
(see \cite[14.3.2]{Carter1}).

\begin{Definition/Lemma}[in ${}^3{D}_4^{syl}(q^3)$]
\label{root subgroups-3D4}
For $t\in \mathbb{F}_{q^3}$, set
\begin{alignat*}{2}
x_1(t):&=x_{r_1^1}(t)=x_{r_3^1}(t^q)=x_{r_4^1}(t^{q^2})
   =x_{r_1}(t)
  \cdot x_{r_3}(t^q)
  \cdot x_{r_4}(t^{q^2}),\\
x_3(t):&=x_{r_5^1}(t)=x_{r_6^1}(t^q)=x_{r_7^1}(t^{q^2})
=x_{r_5}(t)
  \cdot x_{r_6}(t^q)
  \cdot x_{r_7}(t^{q^2}),\\
x_4(t):&=x_{r_8^1}(t)=x_{r_{10}^1}(t^q)=x_{r_{9}^1}(t^{q^2})
=x_{r_8}(t)
 \cdot x_{r_{10}}(t^q)
 \cdot x_{r_{9}}(t^{q^2}).
\end{alignat*}
For $t\in \mathbb{F}_{q}$, set
$x_2(t):=x_{r_2^1}(t)
=x_{r_{2}}(t)$,
$x_5(t):=x_{r_{11}^1}(t)
=x_{r_{11}}(t)$,
$  x_6(t):=x_{r_{12}^1}(t)
=x_{r_{12}}(t)$.
Then the root subgroups of $^3D_4^{syl}(q^3)$ are
$X_i:=\{x_i(t) \mid t\in \mathbb{F}_{q^3}\}$ $(i=1,3,4)$
and $X_i:=\{x_i(t) \mid t^q=t,{\ }t\in \mathbb{F}_{q^3}\}$ $(i=2,5,6)$.
\end{Definition/Lemma}

\begin{Corollary}
$|D_4^{syl}(q^3)|=q^{36}$, $|A_8(q^3)|=q^{84}$ and
${{^3{D}}_4^{syl}}(q^3) \leqslant D_4^{syl}(q^3) \leqslant A_8(q^3)$.
\end{Corollary}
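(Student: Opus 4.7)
The plan is to verify each of the three assertions separately; none involves new ideas beyond what has already been set up, so the corollary reduces to bookkeeping against the formulas and descriptions already in place.

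For the two order statements, I would simply substitute $q \mapsto q^3$ in the formulas derived above. For $D_4^{syl}(q^3)$, the paper has shown that $|D_4^{syl}(q)|=q^{12}$ (by computing $|D_4(q)|$ from \cite{Carter1965} and observing $D_4^{syl}(q)\cap Z(O_8(q,f_D))=\{I_8\}$), so replacing $q$ by $q^3$ gives $|D_4^{syl}(q^3)|=(q^3)^{12}=q^{36}$. For $A_8(q^3)$, the well-known fact that $|A_n(q)|=q^{\binom{n}{2}}$ (which is also immediate from Lemma \ref{pairwise diff-3D4}, giving $q$ choices for each of the $\binom{n}{2}$ superdiagonal entries) yields $|A_8(q^3)|=(q^3)^{28}=q^{84}$.

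For the chain of inclusions, I would handle the outer inclusion $D_4^{syl}(q^3)\leqslant A_8(q^3)$ first. It suffices to show each generator $x_r(t)=I_8+te_r$ with $r\in \Phi_{D_4}^+$ and $t\in\mathbb{F}_{q^3}$ lies in $A_8(q^3)$; inspecting the Chevalley basis $e_{r_1},\dots,e_{r_{12}}$ listed in Lemma \ref{Chevalley, D4}, every matrix unit $e_{i,j}$ appearing has $i<j$, so each $e_r$ is strictly upper triangular and $I_8+te_r$ is upper unitriangular. For the inner inclusion ${}^3D_4^{syl}(q^3)\leqslant D_4^{syl}(q^3)$, Definition/Lemma \ref{root subgroups-3D4} gives each generator $x_i(t)$ of ${}^3D_4^{syl}(q^3)$ explicitly as a product of elements $x_{r_j}(s)$ with $r_j\in \Phi_{D_4}^+$ and $s\in\mathbb{F}_{q^3}$, and such elements are in $D_4^{syl}(q^3)$ by Lemma \ref{sylow p-subg, D4}.

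There is no real obstacle: the corollary is a verification step consolidating the setup of Section \ref{sec:3D4-1-3}. The only point requiring slight care is confirming, for the outer inclusion, that the particular Chevalley basis chosen above in Lemma \ref{Chevalley, D4} has all positive-root elements strictly upper triangular — which is exactly why this basis was selected at the outset ``to keep the description of the Steinberg triality group as simple as possible.''
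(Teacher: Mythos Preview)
Your proposal is correct and matches the paper's approach: the paper states this corollary without proof, treating it as immediate from the preceding setup (the order formulas for $D_4^{syl}(q)$ and $A_n(q)$, the explicit Chevalley basis in Lemma \ref{Chevalley, D4}, and the description of the generators $x_i(t)$ in Definition/Lemma \ref{root subgroups-3D4}). You have simply spelled out the verification that the paper leaves implicit.
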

\begin{Definition}
A subgroup $P\leqslant {{^3{D}}_4^{syl}}(q^3)$ is a \textbf{pattern subgroup},
if it is generated by some root subgroups, i.e.
 $P=\left\langle {\,} X_i {\,}\middle|{\,} i\in I \subseteq \{1,2,\dots,6\}{\,}\right\rangle
 \leqslant {{^3{D}}_4^{syl}}(q^3)$.
\end{Definition}
We get the commutators of ${^3}D_4^{syl}(q^3)$ by calculation.
\begin{Lemma} \label{commutator-3D4}
Let $t_1,t_3,t_4 \in \mathbb{F}_{q^3}$, $t_2,t_5,t_6 \in \mathbb{F}_{q}$
and define the commutator
\begin{align*}
 [x_i(t_i), x_j(t_j)]:=x_i(t_i)^{-1}x_j(t_j)^{-1}x_i(t_i)x_j(t_j).
\end{align*}
Then the non-trivial commutators of ${^3}D_4^{syl}(q^3)$ are determined as follows:
\begin{align*}
[x_1(t_1), x_2(t_2)]=& x_3(-t_2t_1)x_4(t_2t_1^{q+1})x_5(-t_2t_1^{q^2+q+1})x_6(2t_2^2t_1^{q^2+q+1}),\\
[x_1(t_1), x_3(t_3)]=& x_4(t_1t_3^q+t_1^qt_3)
                       x_5(-t_1^{q+1}t_3^{q^2}-t_1^{q^2+q}t_3-t_1^{q^2+1}t_3^{q})
                      x_6(-t_1t_3^{q^2+q}-t_1^qt_3^{q^2+1}-t_1^{q^2}t_3^{q+1}),\\
[x_1(t_1), x_4(t_4)]=& x_5(t_1t_4^q+t_1^qt_4^{q^2}+t_1^{q^2}t_4),\\
[x_3(t_3), x_4(t_4)]=& x_6(t_3t_4^q+t_3^qt_4^{q^2}+t_3^{q^2}t_4),\\
[x_2(t_2), x_5(t_5)]=& x_6(t_2t_5).
\end{align*}
\end{Lemma}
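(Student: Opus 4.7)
The plan is to use the Chevalley commutator formula for the simply-laced root system $\Phi_{D_4}$ together with the explicit expansion of each $x_i(t)$ from \ref{root subgroups-3D4} as a product of at most three elementary Chevalley root elements $x_{r}(s)=I_8+s\,e_r$.

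First, since $D_4$ is simply laced, for any pair $r,s\in \Phi_{D_4}^+$ the Chevalley commutator formula collapses to $[x_r(t),x_s(u)]=x_{r+s}(N_{r,s}\,tu)$ if $r+s\in \Phi_{D_4}^+$, and $[x_r(t),x_s(u)]=1$ otherwise, where $N_{r,s}=\pm 1$ is the structure constant associated to the basis of \ref{Chevalley, D4}. I would first read off $N_{r,s}$ for the (few) relevant pairs of positive roots by a direct matrix computation of $[e_r,e_s]=e_re_s-e_se_r$ using the explicit formulas for $e_{r_1},\dots,e_{r_{12}}$ in \ref{Chevalley, D4}. This is a short finite list.

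Next, for each pair $(x_i(t_i),x_j(t_j))$ in the lemma I would expand both factors as products of root elements and apply the identities $[ab,c]=[a,c]^b[b,c]$ and $[a,bc]=[a,c]\,[a,b]^c$ iteratively. Only pairs whose underlying roots sum to another element of $\Phi_{D_4}^+$ contribute; all others commute. The cascade then has to be normalised and collected. The key point is that the Frobenius exponents appearing in the definitions $x_1(t)=x_{r_1}(t)x_{r_3}(t^q)x_{r_4}(t^{q^2})$, and similarly for $x_3,x_4$, are exactly what is needed so that the resulting contributions reassemble into instances of $x_3,x_4,x_5,x_6$, using $t^q=t$ for $t\in\mathbb{F}_q$ when $x_2,x_5,x_6$ are involved. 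For example, in $[x_1(t_1),x_2(t_2)]$ the three first-order commutators $[x_{r_1}(t_1),x_{r_2}(t_2)]$, $[x_{r_3}(t_1^q),x_{r_2}(t_2)]$, $[x_{r_4}(t_1^{q^2}),x_{r_2}(t_2)]$ produce $x_{r_5},x_{r_6},x_{r_7}$ terms which recombine into $x_3(-t_2t_1)$; continuing the cascade through $r_5{+}r_3=r_8$, $r_6{+}r_4=r_{10}$, etc., generates precisely the higher-order $x_4,x_5,x_6$ corrections displayed in the statement.

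The main obstacle is purely bookkeeping: the three quadratic-in-$t_1$ contributions to $x_4$, the three cubic-in-$t_1$ contributions to $x_5$, the symmetric cubic contribution to $x_6$, and the $2t_2^2$ coefficient (which requires tracking a doubled contribution arising from two different paths in the commutator expansion) all have to be accounted for with correct signs $N_{r,s}$ propagated through up to three nested commutator expansions. In practice I would avoid this by an alternative (equivalent) route: since all $x_i(t)$ are concrete $8\times 8$ matrices over $\mathbb{F}_{q^3}$ with $e_r^2=0$ and $e_re_s=0$ for most pairs, compute $x_i(t_i)$, $x_j(t_j)$ and their inverses (each is $I_8-t\,e_r+\dotsm$, a polynomial of low degree in $t$) directly, form the $8\times 8$ matrix product $x_i(t_i)^{-1}x_j(t_j)^{-1}x_i(t_i)x_j(t_j)$, and read off the entries to confirm they match the right-hand sides of the claimed identities. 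This reduces the proof to five finite matrix verifications and sidesteps the structure-constant sign bookkeeping.
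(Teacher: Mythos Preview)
Your proposal is correct and matches the paper's approach: the paper simply states the lemma follows ``by calculation,'' and both of your routes---expanding via the simply-laced Chevalley commutator formula with the structure constants of \ref{Chevalley, D4}, or multiplying out the explicit $8\times 8$ matrices---are precisely what that phrase means here. Your second route (direct matrix verification) is the cleanest implementation and is what the paper implicitly has in mind.
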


\begin{Notation} \label{notation:x(t)-3D4}
 Set
$ x(t_1, t_2, t_3, t_4, t_5, t_6)
 := x_2(t_2)x_1(t_1)x_3(t_3)x_4(t_4)x_5(t_5)x_6(t_6)\in {^3{D}_4^{syl}(q^3)}$.
\end{Notation}

\begin{Proposition}[Sylow $p$-subgroup $^3{D}_4^{syl}(q^3)$]\label{sylow p-subg, 3D4}
A matrix Sylow $p$-subgroup
of the Steinberg triality group $^3{D}_4(q^3)$
is $^3{D}_4^{syl}(q^3)$
with
\begin{align*}
 {}^3{D}_4^{syl}(q^3)
 =&\left\{ x(t_1, t_2, t_3, t_4, t_5, t_6){\,}\middle|{\,}
 t_1, t_3, t_4 \in {\mathbb{F}_{q^3}},{\ } t_2, t_5, t_6 \in {\mathbb{F}_{q}}\right\},
\end{align*}
where
\begin{align*}
& x(t_1, t_2, t_3, t_4, t_5, t_6)\\
=&
 \left(
\newcommand{\mc}[3]{\multicolumn{#1}{#2}{#3}}
\begin{array}{cccccccc}\cline{2-7}
\mc{1}{c|}{1}
& \mc{1}{c|}{{t_1} }
& \mc{1}{c|}{ {-t_3} }
& \mc{1}{c|}{{
\begin{array}{l}
 t_1{t_3^q}\\
 +t_4
\end{array} } }
& \mc{1}{c|}{{
\begin{array}{l}
 t_1{t_3^{q^2}}\\
 +{t_4^{q^2}}
\end{array}}}
& \mc{1}{c|}{
\begin{array}{l}
 t_1{t_4^q}\\
+t_5
\end{array}}
& \mc{1}{c|}{\begin{array}{l}
 -t_1{t_3^{q^2+q}}
+t_3{t_4^q}\\
+t_6
 \end{array}}
& \rule{0pt}{22pt}
\begin{array}{l}
-t_1{t_3^q}{t_4^{q^2}}
-t_1{t_3^{q^2}}t_4\\
-t_1t_6
+t_3t_5
-{t_4^{q^2+1}}
\end{array}\\\cline{2-7}
×
& \mc{1}{c|}{1}
& \mc{1}{c|}{t_2}
& \mc{1}{c}{{
\begin{array}{l}
 {t_1^q}t_2\\
 +{t_3^q}
\end{array}
} }
& \mc{1}{c}{{
\begin{array}{l}
 {t_1^{q^2}}t_2\\
 +{t_3^{q^2}}
\end{array}
}}
& \mc{1}{c}{{
\begin{array}{l}
-{t_1^{q^2+q}}t_2\\
+{t_4^q}
\end{array}
}}
& \begin{array}{l}
-{t_1^q}t_2{t_3^{q^2}}
-{t_1^{q^2}}t_2{t_3^q}\\
-{t_2}{t_4^q}
-{t_3^{q^2+q}}
  \end{array}
& \rule{0pt}{35pt}
\begin{array}{l}
-{t_1^{q^2+q}}t_2t_3
-{t_1^q}{t_2}{t_4^{q^2}}\\
-{t_1^{q^2}}t_2t_4
-t_2t_5\\
-{t_3^q}{t_4^{q^2}}
-{t_3^{q^2}}t_4
-t_6
\end{array}\\\cline{3-3}
× & × & \mc{1}{c}{1} & \mc{1}{c}{{t_1^q} }
& \mc{1}{c}{{t_1^{q^2}}}
& {-{t_1^{q^2+q}}}
& \rule{0pt}{26pt}
\begin{array}{l}
   -{t_1^q}{t_3^{q^2}}
-{t_1^{q^2}}{t_3^q}\\
-{t_4^q}
  \end{array}
& \begin{array}{l}
  -{t_1^{q^2+q}}t_3
-{t_1^q}{t_4^{q^2}}\\
-{t_1^{q^2}}t_4
-t_5\\
  \end{array}\\
× & × & × & 1 & 0 & -{t_1^{q^2}} & -t_3^{q^2}
&  \rule{0pt}{17pt}
   -{t_1^{q^2}}t_3
   -{t_4^{q^2}} \\
× & × & × & × & 1 & -{t_1^q} &-t_3^q
& -{t_1^q}t_3
    -t_4\\
× & × & × & × & × & 1
& -t_2
& t_1t_2+t_3\\
× & × & × & × & × & × & 1
& -t_1\\
× & × & × & × & × & × & × & 1\\
\end{array}
\right)
\end{align*}
\end{Proposition}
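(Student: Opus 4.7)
The set-theoretic identity in the statement is a direct re-expression of the parametrisation of ${}^3D_4^{syl}(q^3)$ obtained just before Definition/Lemma~\ref{root subgroups-3D4}: by that definition $x_i(t_i)=x_{r_i^1}(t_i)$, and by Notation~\ref{notation:x(t)-3D4} the symbol $x(t_1,\ldots,t_6)$ is exactly the ordered product appearing there. Hence the entire substance of the proposition is the explicit matrix formula, and my plan is to establish it by direct multiplication.

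First I would render each factor $x_i(t_i)$ explicitly using the Chevalley basis of Lemma~\ref{Chevalley, D4}. For $i\in\{1,3,4\}$ the factor is a product $x_{r_a}(t_i)x_{r_b}(t_i^q)x_{r_c}(t_i^{q^2})$ of three elementary Chevalley factors $I_8+se_r$. Although the three $e_r$'s in question commute pairwise in the Lie algebra (their pairwise sums are not roots), their products as matrices are not all zero: for example $e_{r_3}e_{r_4}=-e_{3,6}$, so multiplying out produces a quadratic correction term. A typical result is
\[
x_1(t_1)=I_8+t_1(e_{1,2}-e_{7,8})+t_1^q(e_{3,4}-e_{5,6})+t_1^{q^2}(e_{3,5}-e_{4,6})-t_1^{q+q^2}e_{3,6},
\]
with analogous explicit expressions for $x_3(t_3)$ and $x_4(t_4)$, while $x_2(t_2)$, $x_5(t_5)$, $x_6(t_6)$ are simply $I_8+te_r$ for a single root.

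Next I would expand the six-fold product $(I_8+M_2)(I_8+M_1)(I_8+M_3)(I_8+M_4)(I_8+M_5)(I_8+M_6)$ in this fixed order, where $M_i:=x_i(t_i)-I_8$. Every $M_i$ is strictly upper triangular with restricted support, so only a modest number of monomials $M_{i_1}M_{i_2}\cdots M_{i_k}$ contribute a non-zero entry at any given matrix position; I would read off the $(i,j)$-entry position by position, beginning from the lower rows (where the entries are simplest) and working upward. A valuable consistency check at every step is the orthogonal invariance $A^\top J_8^+A=J_8^+$, valid since $D_4^{syl}(q^3)\leqslant O_8(q^3,f_D)$, which relates the $(i,j)$- and $(9{-}j,9{-}i)$-entries of the product and catches many sign mistakes. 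This step would also confirm injectivity of the parametrisation, since the entries $(1,2),(2,3),(1,3),(1,4),(1,6),(1,7)$ allow $t_1,t_2,t_3,t_4,t_5,t_6$ to be recovered in turn.

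The main obstacle is pure bookkeeping in the top-right block of the matrix, most severely in columns $7$ and $8$. These entries receive contributions from products of three or four of the $M_i$'s carrying various Frobenius twists of the $t_i$, and it is all too easy to drop a sign coming from $e_{r_5}=-(e_{1,3}-e_{6,8})$ or to mis-order a Frobenius exponent. The commutator formulas of Lemma~\ref{commutator-3D4} provide a robust independent check: each commutator $[x_i(t_i),x_j(t_j)]$ extracted from the proposed matrix must agree with the listed expression, and once every such commutator matches there is essentially no room left for the final formula to be in error.
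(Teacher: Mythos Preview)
Your proposal is correct and takes essentially the same approach as the paper, which simply observes that ${}^3D_4^{syl}(q^3)$ is already known to be a Sylow $p$-subgroup and that the matrix form follows ``by calculation'' from Definition/Lemma~\ref{root subgroups-3D4}. You have merely spelled out that calculation in more detail, including useful consistency checks via the orthogonal relation and the commutators of Lemma~\ref{commutator-3D4}.
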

\begin{proof}
We know that
${^3}D_4^{syl}(q^3)$ is a Sylow $p$-subgroup of ${^3}D_4(q^3)$.
By calculation,
we obtain the matrix form from \ref{root subgroups-3D4}.
\end{proof}
Let $t\in \mathbb{F}_{q}$,
$4 \leq  n\in \mathbb{N}^*$,
and
$ x_{i,j}(t): = I_{2n}+t e_{i,j}-t e_{2n-j,2n-i}
=\tilde{x}_{i,j}(t)\tilde{x}_{2n+1-j,2n+1-i}(-t) \in {D}_n^{syl}(q)$
for all $(i,j)\in \UP$.
Let
 $J:=\{(1,2), (1,3), (1,4), (1,5), (1,6), (1,7), (2,3)\} \subseteq \UP$.
We compare the Sylow $p$-subgroups $A_n(q)$,
$D_n^{syl}(q)$ and ${^3}D_4^{syl}(q^3)$.
\begin{Comparison}[Sylow $p$-subgroups]
\label{com:sylow-3D4}
\begin{itemize}
\setlength\itemsep{0em}
 \item [(1)] There exists an order of
$\prod_{(i,j)\in \UR}
            {\tilde{x}_{i,j}(t_{i,j})}$,
            such that for every matrix
            $u=(u_{i,j})\in A_n(q)$,
            we have $u_{k,l}=t_{k,l}$ for all $(k,l)\in \UR$.
 \item [(2)] There exists an order of
$\prod_{(i,j)\in \UP}
            {{x}_{i,j}(t_{i,j})}$,
            such that for every matrix
            $u=(u_{i,j})\in D_n^{syl}(q)$,
            we have $u_{k,l}=t_{k,l}$ for all $(k,l)\in \UP$
 (see \ref{sylow p-subg, D4}).
 \item [(3)] For ${^3}D_4^{syl}(q^3)$, the property does not hold.
 In \ref{sylow p-subg, 3D4}, for instance,
 we have matrix entries $t_1$, $t_2$ and
 up to sign also $t_3$ with positions in $J$,
 but $t_4$, $t_5$ and $t_6$
 appear in $J$ only in
 polynomials
 involving the other
 parameters.
\end{itemize}
\end{Comparison}

\section{The intermediate group $G_8(q^3)$}
\label{sec:bigger group}
In this section,
we construct a group $G_8(q^3)$
such that ${^3{D}}_4^{syl}(q^3)\leqslant G_8(q^3) \leqslant A_8(q^3)$.
Then we determine a monomial $G_8(q^3)$-module
to imitate the $D_n$ case in Section \ref{sec: monomial 3D4-module}.
In Section \ref{partition of U-3D4}, we use the group $G_8(q^3)$
to calculate the superclasses of ${^3}D_4^{syl}(q^3)$.

\begin{Definition/Lemma}[An intermediate group $G_8(q^3)$]
\label{larger group G8-3D4}
Let
\begin{align*}
G_8(q^3):=&
 \left\{
 u=(u_{i,j}) \in A_8(q^3)
{\,}\middle|{\,}
{\left\{\begin{array}{ll}
      u_{i,j}=0, &  (i,j)=(4,5)\\
      u_{i,j}\in \mathbb{F}_q, & (i,j)\in \{(2,3), (6,7)\}\\
      u_{i,j+1}=u_{i,j}^q \in \mathbb{F}_{q^3},    & (i,j)\in \{(2,4), (3,4)\}\\
      u_{i-1,j}=u_{i,j}^q \in \mathbb{F}_{q^3},    & (i,j)\in \{(5,6), (5,7)\}\\
       \end{array}
\right.}
\right\}.
\end{align*}
Then $G_8(q^3)$ is a subgroup of $A_8(q^3)$ of order $q^{65}$.
\end{Definition/Lemma}
\begin{proof}
By direct calculation.
\end{proof}

\begin{Notation}
Write
$\ddot{J}:=\UR\backslash\{(2,5),(3,5),(4,5), (4,6),(4,7)\}$.
For $(i,j)\in \ddot{J}$ and $t\in \mathbb{F}_{q^3}$, set
\begin{align*}
\dot{x}_{i,j}(t):=
{\left\{
\begin{array}{ll}
\tilde{x}_{i,j}(t) \tilde{x}_{i,j+1}(t^q) & \text{if }(i,j)\in \{(2,4),(3,4)\} \\
\tilde{x}_{i,j}(t) \tilde{x}_{i-1,j}(t^q) & \text{if }(i,j)\in \{(5,6),(5,7)\} \\
\tilde{x}_{i,j}(t) & \text{otherwise}
\end{array}
\right..}
\end{align*}
\end{Notation}

\begin{Proposition}\label{pairwise diff-G8q3}
$G_8(q^3)=\Bigg\{\prod_{(i,j)\in \ddot{J}}{\dot{x}_{i,j}(t_{i,j})}
{\,}\Bigg|{\,}
 t_{i,j}\in
{\left\{\begin{array}{ll}
       \mathbb{F}_q       & \text{if } (i,j)\in \{(2,3), (6,7)\}\\
       \mathbb{F}_{q^3}   & \text{otherwise}   \\
       \end{array}
\right.} \Bigg\}$,
where the product can be taken in an arbitrary, but fixed, order.
\end{Proposition}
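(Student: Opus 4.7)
The plan is to prove the proposition by a cardinality argument. Counting the parameter space on the right-hand side, tuples $(t_{i,j})_{(i,j)\in\ddot{J}}$ with $t_{2,3}, t_{6,7}\in\mathbb{F}_q$ and the remaining $21$ components in $\mathbb{F}_{q^3}$ give $q^2\cdot (q^3)^{21}=q^{65}$ elements, exactly $|G_8(q^3)|$ from \ref{larger group G8-3D4}. Thus it suffices to show: (a) each product $\prod_{(i,j)\in\ddot{J}}\dot{x}_{i,j}(t_{i,j})$ lies in $G_8(q^3)$; and (b) the map from the parameter space to $A_8(q^3)$ sending the tuple to this product (in the fixed order) is injective.

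For (a), I would check directly that every generator $\dot{x}_{i,j}(t)$ lies in $G_8(q^3)$. The three cases in the definition of $\dot{x}_{i,j}(t)$ are designed to enforce the Frobenius relations $u_{i,j+1}=u_{i,j}^q$ (for $(i,j)\in\{(2,4),(3,4)\}$) and $u_{i-1,j}=u_{i,j}^q$ (for $(i,j)\in\{(5,6),(5,7)\}$); the remaining constraints defining $G_8(q^3)$, in particular $u_{4,5}=0$ and $u_{2,3}, u_{6,7}\in\mathbb{F}_q$, hold trivially since only the relevant entries are turned on and the two matrix units inside each doubled $\dot{x}_{i,j}$ commute. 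As $G_8(q^3)$ is a subgroup of $A_8(q^3)$ by \ref{larger group G8-3D4}, products of such generators stay inside it.

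For (b), I would exploit the uniqueness part of Lemma \ref{pairwise diff-3D4}. The key combinatorial observation is that the positions $(k,l)\in\UR$ appearing in the expansion of $\prod_{(i,j)\in\ddot{J}}\dot{x}_{i,j}(t_{i,j})$ into a product of matrix-unit unitriangulars $\tilde{x}_{k,l}$ are exactly $\UR\setminus\{(4,5)\}$, each appearing once: the four doubled generators $\dot{x}_{2,4},\dot{x}_{3,4},\dot{x}_{5,6},\dot{x}_{5,7}$ contribute the extra positions $(2,5),(3,5),(4,6),(4,7)$, while $(4,5)$ is never covered. Therefore, given the fixed order on $\ddot{J}$ together with the internal order $\tilde{x}_{i,j}\tilde{x}_{i,j+1}$ or $\tilde{x}_{i,j}\tilde{x}_{i-1,j}$ inside each doubled $\dot{x}_{i,j}$, the expansion has the shape $\prod_{(k,l)\in\UR}\tilde{x}_{k,l}(s_{k,l})$ in a specific fixed order, with $s_{4,5}=0$ and each remaining $s_{k,l}$ equal to $t_{k,l}$, to $t_{k,l-1}^q$, or to $t_{k+1,l}^q$ according to position. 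Applying Lemma \ref{pairwise diff-3D4} in this order, two equal products must have matching $s_{k,l}$ for every $(k,l)\in\UR$, and hence matching $t_{i,j}$ for every $(i,j)\in\ddot{J}$. The only delicate point is the combinatorial bookkeeping of how the chosen order on $\ddot{J}$ induces an order on $\UR\setminus\{(4,5)\}$ and the verification that no position is hit twice; this is immediate once the four doubled generators are listed explicitly, and poses no real obstacle.
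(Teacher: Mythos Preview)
Your proposal is correct and follows essentially the same approach as the paper: show that each $\dot{x}_{i,j}(t_{i,j})$ lies in $G_8(q^3)$, establish injectivity of the parametrization via Lemma~\ref{pairwise diff-3D4}, and conclude by comparing cardinalities with the order $q^{65}$ from \ref{larger group G8-3D4}. Your write-up spells out the combinatorial bookkeeping (how the doubled generators cover exactly $\UR\setminus\{(4,5)\}$) more explicitly than the paper does, but the underlying argument is the same.
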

\begin{proof}
Let $S$ denote the right side. Then $S\subseteq G_8(q^3)$ since $\dot{x}_{i,j}(t_{i,j})\in G_8(q^3)$.
Fix an order of the product,
and suppose that
$\prod_{(i,j)\in \ddot{J}}{\dot{x}_{i,j}(t_{i,j})}=\prod_{(i,j)\in \ddot{J}}{\dot{x}_{i,j}(s_{i,j})}$.
Then $t_{i,j}=s_{i,j}$ for all $(i,j)\in \ddot{J}$ by \ref{pairwise diff-3D4},
and $|S|=q^{65}$.
Hence $S=G_8(q^3)$ since $|G_8(q^3)|=q^{65}$.
\end{proof}

\begin{Corollary}
${{^3{D}}_4^{syl}}(q^3) \leqslant G_8(q^3) \leqslant A_8(q^3)$.
\end{Corollary}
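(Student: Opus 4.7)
The containment $G_8(q^3) \leqslant A_8(q^3)$ is already part of the conclusion of Definition/Lemma \ref{larger group G8-3D4}, so the entire content of the corollary reduces to proving ${{^3{D}}_4^{syl}}(q^3) \leqslant G_8(q^3)$.

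My plan is a direct entry-by-entry verification on the explicit matrix form of a generic element $u = x(t_1, t_2, t_3, t_4, t_5, t_6)$ of ${{^3{D}}_4^{syl}}(q^3)$ written out in Proposition \ref{sylow p-subg, 3D4}. I would go through each defining constraint of $G_8(q^3)$ from Definition/Lemma \ref{larger group G8-3D4} in turn. First, $u_{4,5}=0$ is visible directly on the matrix. Next, $u_{2,3}=t_2$ and $u_{6,7}=-t_2$, both in $\mathbb{F}_q$ by Notation \ref{notation:x(t)-3D4}. The remaining four conditions are Frobenius-twist equalities: using that $t_2\in\mathbb{F}_q$ (so $t_2^q=t_2$), one checks that
\[
u_{2,5}=t_1^{q^2}t_2+t_3^{q^2}=\bigl(t_1^{q}t_2+t_3^{q}\bigr)^{q}=u_{2,4}^{q},
\]
and similarly $u_{3,5}=(t_1^{q})^{q}=u_{3,4}^{q}$, $u_{4,6}=-t_1^{q^2}=(-t_1^{q})^{q}=u_{5,6}^{q}$, and $u_{4,7}=-t_3^{q^2}=(-t_3^{q})^{q}=u_{5,7}^{q}$. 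All entries of $u$ evidently lie in $\mathbb{F}_{q^3}$, so $u\in A_8(q^3)$, and combined with the relations above this places $u\in G_8(q^3)$.

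There is no serious obstacle here: the argument is routine bookkeeping on the matrix of Proposition \ref{sylow p-subg, 3D4}. The only care needed is to track which parameters lie in $\mathbb{F}_q$ versus $\mathbb{F}_{q^3}$ (namely $t_2,t_5,t_6\in\mathbb{F}_q$ and $t_1,t_3,t_4\in\mathbb{F}_{q^3}$, per Proposition \ref{sylow p-subg, 3D4}), since the Frobenius identity $t^{q}=t$ for $\mathbb{F}_q$-valued parameters is precisely what makes the compressed entries in rows $2,3$ and columns $6,7$ match under raising to the $q$-th power.
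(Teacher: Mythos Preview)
Your argument is correct. The paper states this corollary without proof, and your entry-by-entry verification against the explicit matrix of Proposition \ref{sylow p-subg, 3D4} is exactly the natural way to justify it; the only thing the paper's placement suggests (immediately after Proposition \ref{pairwise diff-G8q3}) is that one might equally well observe that each root-subgroup generator $x_i(t)$ of ${^3D}_4^{syl}(q^3)$ is already a product of the $\dot{x}_{i,j}$'s, but your direct check is just as short.
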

\begin{Comparison}[Intermediate groups]
\label{com:intermediate group-3D4}
\begin{itemize}
 \setlength\itemsep{0em}
 \item [(1)] The intermediate group of $A_n(q)$ is $A_n(q)$ itself.
 \item [(2)] The intermediate group of $D_n^{syl}(q)$ is $A_{2n}(q)$
 (see \cite[3.1.2]{Markus1}).
 \item [(3)] The intermediate group of ${^3}D_4^{syl}(q^3)$ is $G_8(q^3)$
 (see \ref{larger group G8-3D4}).
\end{itemize}
\end{Comparison}


\section{Monomial ${{^3}D_4^{syl}(q^3)}$-module}
\label{sec: monomial 3D4-module}
Let $G:=G_8(q^3)$ and $U:={^3}D_4^{syl}(q^3)$.
In this section,
we construct an $\mathbb{F}_q$-subspace $V$ of $V_0$ (\ref{V, 3D4}),
a projection $\pi \colon V_0\to V$ (\ref{pi,3D4} and \ref{pi proj,3D4})
and a non-degenerate bilinear form ${\kappa_q}|_{V\times V}$ (\ref{kappa q, nond, 3D4}).
Then we determine an $\mathbb{F}_q$-linear group action $-\circ-$ (\ref{circ action-3D4})
and a surjective 1-cocycle $f$ of $G$ in $V$ (\ref{mono. lin.-3D4}).
Thus the monomial linearisation $(f,{\kappa_q}|_{V\times V})$ for $G_8(q^3)$
(\ref{mono. lin.-3D4}) is established.
Since $f|_U$ is a bijective 1-cocycle (\ref{f_U bij-3D4}),
$(f|_U,{\kappa_q}|_{V\times V})$ is a monomial linearisation for ${^3}D_4^{syl}(q^3)$ (\ref{mono. lin. for U-3D4}).
Then we make $\mathbb{C}U$ into a monomial $G_8(q^3)$-module (\ref{fund thm U-3D4}).

Let $V_0:=\mathrm{Mat}_{8\times 8}(q^3)$.
For any $I\subseteq \Squar$,
let $V_I:=\bigoplus_{(i,j)\in I}{\mathbb{F}_{q^3}}e_{ij} \subseteq V_0$.
In particular, $V_{\Squarb}=V_0$.
Then $V_I$ is an $\mathbb{F}_{q^3}$-vector space and also an $\mathbb{F}_{q}$-vector space.
We know
 $J=\{(1,2), (1,3), (1,4), (1,5), (1,6), (1,7), (2,3)\}$,
and $\dim_{\mathbb{F}_q}{V_J}=21$.
The \textbf{trace} of $A=(A_{i,j})\in V_0$ is denoted by
$\mathrm{tr}(A):=\sum_{i=1}^{8}{A_{i,i}}$.
The map
$
\kappa\colon V_0\times V_0  \to  {\mathbb{F}_{q^3}}: (A,B)\mapsto
 \mathrm{tr} (A^\top B)
$
is a symmetric $\mathbb{F}_{q^3}$-bilinear form on $V_0$
which is called the \textbf{trace form}.
In particular, $\kappa(A,B)=\sum_{(i,j)\in \Squarb}{A_{i,j}B_{i,j}}$
and $\kappa$ is non-degenerate.
 Let $V_J^{\bot}$ denote the orthogonal complement of $V_J$ in $V_0$
 with respect to the trace form $\kappa$, i.e.
 $V_J^{\bot}:=\{B\in V_0 \mid \kappa(A,B)=0,{\ }\forall {\ } A \in V_J\}$.
 Then
  $V_J^{\bot}=V_{\Squarb \backslash J}$
  {and}
  $V_0=V_J \oplus V_J^{\bot}$.
 $\kappa|_{V_J\times V_J}\colon V_J\times V_J \to  \mathbb{F}_{q^3}$ is a non-degenerate bilinear form.
The map
$\pi_J\colon V_0=V_J\oplus V_J^\bot  \to  V_J:
  A\mapsto
\sum_{(i,j)\in J}{A_{i,j}e_{i,j}}$
is a projection of $V_0$ to the first component $V_J$.
The \textbf{support}
of $A\in \mathrm{Mat}_{8\times 8}(K)$ is
$ \mathrm{supp}(A):= \{(i,j)\in \Squar \mid  A_{i,j}\neq 0\}$.
Let $V\subseteq V_0$ be a subspace of $V_0$, and set
$\mathrm{supp}(V):= \bigcup_{A\in V} \mathrm{supp}(A)$.
Suppose $A,B\in V_0$ such that $\mathrm{supp}(A)\cap\mathrm{supp}(B)\subseteq J$.
Then
$\kappa(A,B)=\kappa(\pi_J(A),B)=\kappa(A,\pi_J(B))
= \kappa(\pi_J(A),\pi_J(B))=\kappa|_{V_J\times V_J}(\pi_J(A),\pi_J(B))$.

The map $\phi_0 \colon  \mathbb{F}_{q^3} \to  \mathbb{F}_q: t \mapsto t+t^q+t^{q^2}$
is an $\mathbb{F}_q$-epimorphism
and $|{\ker}\phi_0|=q^2$ (see \cite[3.2]{sun2016arxiv}).
There exists an element $\eta\in {\mathbb{F}_{q^3}}\backslash {\mathbb{F}_{q}}$
(i.e. $\eta \in {\mathbb{F}_{q^3}}$ but $\eta \notin {\mathbb{F}_{q}}$)
such that
$\eta^{q^2}+\eta^{q}+\eta=1$
(see \cite[3.3]{sun2016arxiv}).
From now on, we fix an element $\eta\in {\mathbb{F}_{q^3}}\backslash {\mathbb{F}_{q}}$
such that $\eta^{q^2}+\eta^{q}+\eta=1$.
Then $1+\eta^{1-q^2}\neq 0$ (see \cite[3.4]{sun2016arxiv}).
\begin{Notation/Lemma}[3.5 of \cite{sun2016arxiv}] \label{pi_q, 3D4}
Let
$\pi_q \colon  {\mathbb{F}_{q^3}}  \to  {\mathbb{F}_{q}}:
x\mapsto \phi_0(\eta x)=(\eta x)^{q^2}+(\eta x)^q+{\eta x}$.
Then $\pi_q$ is an $\mathbb{F}_q$-epimorphism,
and $\mathbb{F}_{q^3}=\ker{\pi_q}\oplus \mathbb{F}_q$.
In particular, $\pi_q|_{\mathbb{F}_q}=\mathrm{id}_{\mathbb{F}_q}$
and $\pi_q^2=\pi_q$.
\end{Notation/Lemma}
\begin{Corollary}
If $x\in \mathbb{F}_{q^3}$.
Then
$\pi_q(xy)=0$ for all $y\in \mathbb{F}_{q^3}$
if and only if
$x=0$.
\end{Corollary}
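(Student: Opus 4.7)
The plan is to establish this as a non-degeneracy statement about the $\mathbb{F}_q$-bilinear pairing $(x,y)\mapsto \pi_q(xy)$ on $\mathbb{F}_{q^3}$, using only the properties of $\pi_q$ already recorded in \ref{pi_q, 3D4}. The backward direction $(\Leftarrow)$ is immediate: if $x=0$, then $xy=0$ for every $y\in \mathbb{F}_{q^3}$, and $\pi_q(0)=0$ since $\pi_q$ is $\mathbb{F}_q$-linear.

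For the forward direction $(\Rightarrow)$, I would proceed by contrapositive. Assume $x\in \mathbb{F}_{q^3}$ is non-zero. Since $\mathbb{F}_{q^3}$ is a field, multiplication by $x$, namely the map $\mu_x\colon \mathbb{F}_{q^3}\to \mathbb{F}_{q^3}$, $y\mapsto xy$, is an $\mathbb{F}_q$-linear bijection of $\mathbb{F}_{q^3}$ onto itself. Consequently the composed $\mathbb{F}_q$-linear map $\pi_q\circ \mu_x\colon y\mapsto \pi_q(xy)$ has the same image as $\pi_q$ itself, namely $\mathbb{F}_q$, because $\pi_q$ is an $\mathbb{F}_q$-epimorphism onto $\mathbb{F}_q$ by \ref{pi_q, 3D4}. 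In particular this image contains $1\neq 0$, so there exists at least one $y\in \mathbb{F}_{q^3}$ with $\pi_q(xy)\neq 0$, as required.

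No step looks genuinely hard; the only point worth flagging is that the argument does use in an essential way both properties packaged into \ref{pi_q, 3D4}, namely that $\pi_q$ is surjective onto $\mathbb{F}_q$ (so the image of $\pi_q\circ \mu_x$ is non-trivial) and that $\pi_q$ is $\mathbb{F}_q$-linear (so the composition with the $\mathbb{F}_q$-linear bijection $\mu_x$ makes sense as an $\mathbb{F}_q$-linear map). The statement can therefore be read as saying that the trace-like pairing $\mathbb{F}_{q^3}\times \mathbb{F}_{q^3}\to \mathbb{F}_q$, $(x,y)\mapsto \pi_q(xy)$, is non-degenerate, which is precisely the property needed later when $\kappa_q$ is restricted to $V\times V$ in order to obtain a non-degenerate form.
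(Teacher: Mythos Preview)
Your proof is correct. The paper states this corollary without proof, treating it as an immediate consequence of \ref{pi_q, 3D4}; your argument via the bijectivity of $\mu_x$ for $x\neq 0$ combined with the surjectivity of $\pi_q$ is exactly the intended justification.
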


\begin{Proposition} \label{kappa_q,3D4}
The map
$\kappa_q:=\pi_q\circ\kappa \colon  V_0\times V_0  \to  \mathbb{F}_q
 :(A,B)\mapsto \pi_q\circ\kappa(A,B)=\pi_q\big(\mathrm{tr} (A^\top B) \big)$
 is a symmetric ${\mathbb{F}_q}$-bilinear form on $V_0$.
\end{Proposition}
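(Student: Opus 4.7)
The plan is to assemble this from two ingredients that are already in hand: the symmetric $\mathbb{F}_{q^3}$-bilinearity of the trace form $\kappa$ (stated immediately after its definition) and the $\mathbb{F}_q$-linearity of the projection $\pi_q\colon \mathbb{F}_{q^3}\to \mathbb{F}_q$ from \ref{pi_q, 3D4}. Since $\mathbb{F}_q\subseteq \mathbb{F}_{q^3}$, the form $\kappa$ is in particular symmetric $\mathbb{F}_q$-bilinear. The composition of an $\mathbb{F}_q$-linear map with an $\mathbb{F}_q$-bilinear form is $\mathbb{F}_q$-bilinear, and preserves symmetry. So there is no real obstacle; the verification is a two-line check.

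Concretely, I would check the three required identities in sequence. For symmetry, $\kappa_q(A,B)=\pi_q(\kappa(A,B))=\pi_q(\kappa(B,A))=\kappa_q(B,A)$ using symmetry of $\kappa$. For additivity in the first slot, $\kappa_q(A+A',B)=\pi_q\bigl(\kappa(A,B)+\kappa(A',B)\bigr)=\pi_q(\kappa(A,B))+\pi_q(\kappa(A',B))=\kappa_q(A,B)+\kappa_q(A',B)$, using bilinearity of $\kappa$ and additivity of $\pi_q$. For $\mathbb{F}_q$-homogeneity, given $\lambda \in \mathbb{F}_q$, $\kappa_q(\lambda A,B)=\pi_q(\lambda \kappa(A,B))=\lambda \pi_q(\kappa(A,B))=\lambda\kappa_q(A,B)$, where the crucial point is that $\lambda\in \mathbb{F}_q$ so that $\mathbb{F}_q$-linearity of $\pi_q$ applies. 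Symmetry then transports the same conclusions to the second slot.

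The only point worth flagging is that $\mathbb{F}_q$-homogeneity would \emph{fail} if one tried to scale by an arbitrary $\lambda\in \mathbb{F}_{q^3}$, because $\pi_q$ is merely $\mathbb{F}_q$-linear and not $\mathbb{F}_{q^3}$-linear; this is exactly why the statement restricts to an $\mathbb{F}_q$-bilinear form. Hence the proof I would write is essentially one display line per property, invoking \ref{pi_q, 3D4} and the bilinearity of $\kappa$.
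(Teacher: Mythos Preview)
Your proposal is correct and follows the same approach as the paper: both argue that $\kappa$ is symmetric bilinear and $\pi_q$ is an $\mathbb{F}_q$-linear map (\ref{pi_q, 3D4}), so the composite is a symmetric $\mathbb{F}_q$-bilinear form. Your version simply spells out the verification that the paper leaves implicit.
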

\begin{proof}
We know that $\kappa$ is a symmetric bilinear map,
and that $\pi_q$ is an $\mathbb{F}_q$-epimorphism (\ref{pi_q, 3D4}),
then the claim is proved.
\end{proof}

\begin{Notation/Lemma}\label{V, 3D4}
Let
\begin{align*}
V:&=\left\{  A=(A_{ij})\in V_0
{\,}\middle|{\,}
\mathrm{supp}(A)\in J,
{\ }A_{16}, {\ }A_{17}, {\ }A_{23}\in {\mathbb{F}_q}, {\ }A_{14}=A_{15}^q\in \mathbb{F}_{q^3}  \right\}\\
&=\left\{
\left(
{%
\newcommand{\mc}[3]{\multicolumn{#1}{#2}{#3}}
\begin{array}{cccccccc}\cline{2-7}
\mc{1}{c|}{} & \mc{1}{c|}{A_{12}} & \mc{1}{c|}{A_{13}}
& \mc{1}{c|}{\rule{0pt}{12pt} A_{15}^q}
& \mc{1}{c|}{A_{15}} & \mc{1}{c|}{A_{16}} & \mc{1}{c|}{A_{17}} & \\\cline{2-7}
× & \mc{1}{c|}{} & \mc{1}{c|}{A_{23}} & × & × & ×&  & ×\\\cline{3-3}
\end{array}
}%
\right)_{8\times 8}
\in V_0
{\,}\middle|{\,}
{\left\{
\begin{array}{l}
 A_{12}, A_{13}, A_{15} \in {\mathbb{F}_{q^3}}\\
 A_{16}, A_{17}, A_{23}\in {\mathbb{F}_q}
\end{array}
\right.}
\right\}
\end{align*}
omitting all zero entries in the matrices,
in particular at positions $(1,1)$ and $(1,8)$.
Then $V$ is a 12-dimensional subspace of $V_J$
over $\mathbb{F}_{q}$
and $\mathrm{supp}(V)=J$.
\end{Notation/Lemma}

We define the following map $\pi$,
which plays a crucial role in our later statement.
\begin{Notation/Lemma}\label{pi,3D4}
Let
\begin{align*}
& \pi \colon V_0 \to  V:
  A\mapsto
{
\begin{array}{l}
A_{12}e_{12}+A_{13}e_{13}
+(\frac{A_{14}^{q^2}+A_{15}\eta^{1-q^2}}{1+\eta^{1-q^2}})^qe_{14}+{\frac{A_{14}^{q^2}+A_{15}\eta^{1-q^2}}{1+\eta^{1-q^2}}}e_{15}\\
+\pi_q(A_{16})e_{16}+\pi_q(A_{17})e_{17}+\pi_q(A_{23})e_{23}
\end{array}
},
\end{align*}
i.e.
\begin{align*}
 \pi(A)=&
\left(
{%
\newcommand{\mc}[3]{\multicolumn{#1}{#2}{#3}}
\begin{array}{cccccccc}\cline{2-7}
\mc{1}{c|}{}
& \mc{1}{c|}{A_{12}}
& \mc{1}{c|}{A_{13}}
& \mc{1}{c|}{
\rule{0pt}{19pt}
\begin{array}{l}
(\frac{A_{14}^{q^2}+A_{15}\eta^{1-q^2}}{1+\eta^{1-q^2}})^q\\
 \end{array}}
& \mc{1}{c|}{
\rule{0pt}{19pt}
{\frac{A_{14}^{q^2}+A_{15}\eta^{1-q^2}}{1+\eta^{1-q^2}}}}
& \mc{1}{c|}{\pi_q(A_{16})}
& \mc{1}{c|}{\pi_q(A_{17})}
& \\\cline{2-7}
× & \mc{1}{c|}{}
& \mc{1}{c|}{
\begin{array}{l}
\pi_q(A_{23})\\
\end{array}}
& × & × & ×&  & ×\\\cline{3-3}
\end{array}
}%
\right)
\end{align*}
Then $\pi$ is an ${\mathbb{F}_q}$-epimorphism.
In particular,
$\pi|_V=\mathrm{id}_{V}$, $\pi^2=\pi$ and $\pi(I_8)=O_8$.
\end{Notation/Lemma}
\begin{proof}
Let $A=(A_{ij}), B=(B_{ij})\in V_0$ and $k\in \mathbb{F}_q$. Then
we obtain
$\pi(A+B)=\pi(A)+\pi(B)$ and  $\pi(k A)=k\pi(A)$
by straightforward calculation.
By \ref{pi_q, 3D4} and \ref{V, 3D4}, $\pi|_V=\mathrm{id}_V$.
Thus $\pi$ is an ${\mathbb{F}_q}$-epimorphism.
\end{proof}

\begin{Lemma}\label{VT,3D4}
Let $V^{\bot}$ denote the orthogonal complement of the subspace $V$ of $V_0$
 with respect to $\kappa_q$, i.e.
$V^\bot:= \{B\in V_0 \mid \kappa_q(A,B)=0 {\ } \text{ for all }  A \in V\}$,
 and
 \begin{align*}
W:=& \bigoplus_{(i,j)\notin J}{\mathbb{F}_{q^3}e_{ij}}
+{\ker{\pi_q}}e_{16}+{\ker{\pi_q}} e_{17}+{\ker{\pi_q}} e_{23}
+\{xe_{15}-x^q\eta^{q-1}e_{14} \mid x\in \mathbb{F}_{q^3}\}\\
=& \{ A=(A_{ij})\in V_0
   \mid A_{12}=A_{13}=0,A_{14}=-A_{15}^q\eta^{q-1}\in \mathbb{F}_{q^3}, A_{16}, A_{17}, A_{23} \in \ker\pi_q \}.
\end{align*}
Then
$W =V^\bot$.
\end{Lemma}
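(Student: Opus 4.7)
The plan is to prove $W = V^{\bot}$ in two stages: first show $W \subseteq V^{\bot}$ directly from the bilinearity of $\kappa_q$, then close up by a dimension count that uses the non-degeneracy of $\kappa_q$.

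For the inclusion, I would fix $A \in V$ and $B \in W$. Since $\mathrm{supp}(A) \subseteq J$, only the seven positions in $J$ survive in $\kappa(A,B) = \mathrm{tr}(A^{\top}B)$, so
\[ \kappa_q(A,B) = \pi_q\bigl(A_{12}B_{12} + A_{13}B_{13} + A_{14}B_{14} + A_{15}B_{15} + A_{16}B_{16} + A_{17}B_{17} + A_{23}B_{23}\bigr). \]
The conditions $B_{12}=B_{13}=0$ (from $B \in W$) kill two terms. For $(i,j) \in \{(1,6),(1,7),(2,3)\}$, the definitions give $A_{i,j} \in \mathbb{F}_q$ and $B_{i,j} \in \ker \pi_q$, so $\mathbb{F}_q$-linearity of $\pi_q$ from \ref{pi_q, 3D4} yields $\pi_q(A_{i,j}B_{i,j}) = A_{i,j}\pi_q(B_{i,j}) = 0$. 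The only substantial contribution comes from positions $(1,4),(1,5)$; substituting $A_{14}=A_{15}^q$ and $B_{14} = -B_{15}^q\eta^{q-1}$ and setting $y := A_{15}B_{15}$, the whole inclusion reduces to the scalar identity $\pi_q(y) = \pi_q(y^q\eta^{q-1})$ for every $y \in \mathbb{F}_{q^3}$.

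That identity is the main obstacle, though a mild one. Unpacking the definition,
\[ \pi_q(y^q\eta^{q-1}) = (\eta\cdot y^q\eta^{q-1}) + (\eta\cdot y^q\eta^{q-1})^q + (\eta\cdot y^q\eta^{q-1})^{q^2} = y^q\eta^q + y^{q^2}\eta^{q^2} + y^{q^3}\eta^{q^3}, \]
and applying $y^{q^3}=y$, $\eta^{q^3}=\eta$ (since $y,\eta \in \mathbb{F}_{q^3}$) recovers $\pi_q(y) = \eta y + \eta^q y^q + \eta^{q^2} y^{q^2}$, as required.

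For the reverse inclusion I would argue by dimensions. The form $\kappa_q$ is non-degenerate on $V_0$ over $\mathbb{F}_q$: for $B \neq 0$, picking $(i,j)$ with $B_{i,j} \neq 0$, the value $\kappa_q(te_{i,j},B) = \pi_q(tB_{i,j})$ is nonzero for some $t \in \mathbb{F}_{q^3}$ by the corollary to \ref{pi_q, 3D4}. Hence $\dim_{\mathbb{F}_q} V^{\bot} = \dim_{\mathbb{F}_q} V_0 - \dim_{\mathbb{F}_q} V = 192 - 12 = 180$. On the other side, the three summands defining $W$ live on pairwise disjoint coordinate sets (respectively the complement of $J$ in the $8\times 8$-grid, the set $\{(1,6),(1,7),(2,3)\}$, and the pair $\{(1,4),(1,5)\}$), so the sum is direct; the $\mathbb{F}_q$-dimensions $57\cdot 3 = 171$, $3\cdot 2 = 6$ (using $\dim_{\mathbb{F}_q}\ker\pi_q = 2$), and $3$ (one free parameter $x \in \mathbb{F}_{q^3}$) total $180$. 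Combined with the established inclusion, this forces $W = V^{\bot}$.
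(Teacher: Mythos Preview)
Your proof is correct. The first inclusion $W\subseteq V^\bot$ is essentially the same as the paper's, though you spell out the key identity $\pi_q(y)=\pi_q(y^q\eta^{q-1})$ that the paper leaves implicit.

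The reverse inclusion is where you diverge. The paper establishes $V^\bot\subseteq W$ directly: for $B\in V^\bot$ it tests against specific elements $A\in V$ (for instance $A=B_{12}^{-1}e_{12}$, $A=e_{16}$, $A=A_{15}^qe_{14}+A_{15}e_{15}$) and reads off the constraints $B_{12}=B_{13}=0$, $B_{16},B_{17},B_{23}\in\ker\pi_q$, $B_{14}=-B_{15}^q\eta^{q-1}$ one at a time, the last via a counting argument on $\ker\phi_0$. Your route instead proves non-degeneracy of $\kappa_q$ on all of $V_0$ (legitimately, using the corollary following \ref{pi_q, 3D4}) and finishes by a dimension count. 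Your argument is shorter and avoids the case-by-case testing; the paper's argument is more self-contained in that it never needs the global non-degeneracy of $\kappa_q$, and it yields the explicit description of $V^\bot$ constructively rather than by matching cardinalities.
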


\begin{proof}
For all $B\in W$ and $A\in V$,
$\kappa_q(A,B)
 =\pi_q\kappa(A,B)
{=}\pi_q(A_{15}B_{15}-A_{15}^qB_{15}^q\eta^{q-1})
 =0$,
so $ B \in V^\bot$ for all $B\in W$
and $W\subseteq V^\bot$.

For all $B=(B_{ij})\in V^\bot \subseteq V_0$ and $ A=(A_{ij})\in V$,
we have
$0=\kappa_q(A,B)
 =\pi_q\kappa(A,B)
 =\pi_q(A_{12}B_{12}+A_{13}B_{13}+A_{15}^qB_{14}+A_{15}B_{15}+A_{16}B_{16}+A_{17}B_{17}+A_{23}B_{23})$.
Assume that $B_{12}\neq 0$.
If $A=A_{12}e_{12}=B_{12}^{-1}e_{12}$,
then $0=\kappa_q(A,B)=\pi_q(A_{12}B_{12})=\pi_q(1)=1$,
this is a contradiction, so $B_{12}=0$.
Similarly, $B_{13}=0$.
If $A=A_{16}e_{16}=e_{16}$, then
$ 0=\kappa_q(A,B)=\pi_q(A_{16}B_{16})=\pi_q(B_{16})$,
so $B_{16}\in \ker\pi_q$.
Similarly, $B_{17}, B_{23}\in \ker\pi_q$.
Assume $B_{14}\neq-B_{15}^q\eta^{q-1}$, $\mathrm{i.e.}$ $B_{14}^{q^2}\eta^{q^2}+B_{15}\eta \neq0$.
If $A=A_{15}^qe_{14}+A_{15}e_{15}$ $(A_{15}\in \mathbb{F}_{q^3})$, then
\begin{align*}
  0&=\kappa_q(A,B)
  =\pi_q(A_{15}^qB_{14}+A_{15}B_{15})
  = \phi_0\big(A_{15}(B_{14}^{q^2}\eta^{q^2}+B_{15}\eta)\big).
\end{align*}
Since $B_{14}^{q^2}\eta^{q^2}+B_{15}\eta \neq 0$,
we have $\ker\phi_0 =\mathbb{F}_{q^3}$
and
$|\ker\phi_0| = {q^3}>q^2$.
This is a contradiction since $ |\ker \phi_0| = {q^2}$. Thus $B_{14}=-B_{15}^q\eta^{q-1}$.
Hence $B \in W$ and $ V^\bot \subseteq W$.
Therefore, $ V^\bot=W$.
\end{proof}

\begin{Corollary}\label{pi proj,3D4}
 $V_0=V\oplus V^\bot$,
 and
 $\pi \colon V_0 \to  V$ is the projection to the first component $V$.
\end{Corollary}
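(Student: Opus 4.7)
The plan is to leverage the structural facts about $\pi$ already recorded in \ref{pi,3D4}: $\pi$ is an $\mathbb{F}_q$-linear map from $V_0$ to $V$ satisfying $\pi|_V=\mathrm{id}_V$ and $\pi^2=\pi$. Viewing $\pi$ as an endomorphism of $V_0$, these two properties say exactly that $\pi$ is an $\mathbb{F}_q$-linear idempotent with image $V$. A standard argument then gives the decomposition: for any $A\in V_0$ we write $A=\pi(A)+(A-\pi(A))$, where $\pi(A)\in V$ and $\pi(A-\pi(A))=\pi(A)-\pi^2(A)=0$, so $A-\pi(A)\in\ker\pi$; and if $A\in V\cap\ker\pi$ then $A=\pi(A)=0$. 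Hence $V_0=V\oplus\ker\pi$ and $\pi$ is the projection to its image $V$ along $\ker\pi$. The entire statement therefore reduces to identifying $\ker\pi$ with $V^\bot$.

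Using the description $V^\bot=W$ from \ref{VT,3D4}, the strategy is to compare, entry by entry, the conditions $\pi(A)=O_8$ with the defining conditions of $W$. Four of the constraints are immediate from reading the formula for $\pi$ in \ref{pi,3D4}: the coefficients of $e_{12}$ and $e_{13}$ in $\pi(A)$ are $A_{12}$ and $A_{13}$, while the coefficients of $e_{16}$, $e_{17}$, $e_{23}$ are $\pi_q(A_{16})$, $\pi_q(A_{17})$, $\pi_q(A_{23})$; vanishing of these gives respectively $A_{12}=A_{13}=0$ and $A_{16},A_{17},A_{23}\in\ker\pi_q$, exactly as in $W$. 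All entries indexed by $(i,j)\notin J$ are unrestricted on both sides.

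The only step that requires genuine computation is the coupled $(1,4)$/$(1,5)$ pair. Vanishing of the $e_{14}$- and $e_{15}$-components of $\pi(A)$ amounts to the single scalar equation
\[
A_{14}^{q^2}+A_{15}\eta^{1-q^2}=0,
\]
whereas $W$ imposes $A_{14}=-A_{15}^{q}\eta^{q-1}$. The task is to show these two equations cut out the same one-parameter $\mathbb{F}_{q^3}$-subspace of $\mathbb{F}_{q^3}e_{14}\oplus\mathbb{F}_{q^3}e_{15}$. Raising the first equation to the $q$-th power and using $A_{14}^{q^3}=A_{14}$ (which holds because $A_{14}\in\mathbb{F}_{q^3}$) yields
\[
A_{14}=-A_{15}^{q}\eta^{(1-q^2)q}=-A_{15}^{q}\eta^{q-q^3}=-A_{15}^{q}\eta^{q-1},
\]
where the last equality uses $\eta^{q^3}=\eta$. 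The reverse implication is obtained by reversing these manipulations. This is the main obstacle: the denominator $1+\eta^{1-q^2}$ in the definition of $\pi$ and the factor $\eta^{q-1}$ in the definition of $W$ look different, and one must be careful with Frobenius exponents modulo $q^3-1$ to see that they describe the same locus. Once this identification is done, $\ker\pi=W=V^\bot$, completing the proof.
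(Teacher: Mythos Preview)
Your proof is correct and follows essentially the same approach as the paper: both hinge on the Frobenius computation showing that the vanishing condition $A_{14}^{q^2}+A_{15}\eta^{1-q^2}=0$ is equivalent to $A_{14}=-A_{15}^{q}\eta^{q-1}$. The only organizational difference is that you invoke the idempotent property $\pi^2=\pi$ from \ref{pi,3D4} to get $V_0=V\oplus\ker\pi$ immediately and then identify $\ker\pi=W=V^\bot$, whereas the paper verifies $V\cap V^\bot=\{0\}$ directly and checks $A-\pi(A)\in V^\bot$ for general $A$; your route is slightly more streamlined but not substantively different.
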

\begin{proof}
We know that $V+ V^\bot \subseteq V_0$
since $V$ and $V^\bot$ are $\mathbb{F}_q$-subspaces of $V_0$.
If $A=(A_{ij})\in V\cap V^\bot$,
then $A_{15}^q
      \stackrel{A\in V}{=} A_{14}
      \stackrel{A\in V^\bot}{=} -A_{15}^q\eta^{q-1}$,
    so $A_{15}^q(1+\eta^{q-1})=0$ and $A_{15}=0$.
Thus $A=0$ by \ref{pi_q, 3D4} and \ref{VT,3D4}.
Hence $V\cap  V^\bot=\{0\}$.
If $A\in V_0$, then $A=\pi(A)+(A-\pi(A))$ and $\pi(A)\in V$.
It is enough to show that $A-\pi(A)\in V^\bot$.
Let $B=(B_{i,j})=\pi(A)\in V$ and $C=(C_{i,j})=A-\pi(A)$.
By \ref{pi_q, 3D4} we have $\mathbb{F}_{q^3}=\ker \pi_q \oplus \mathbb{F}_q$.
Thus it is sufficient to prove that $C_{14}=-C_{15}^q\eta^{q-1}$.
We have
$C_{15}=\frac{A_{15}-A_{14}^{q^2}}{1+\eta^{1-q^2}}$
and
$C_{14}={\frac{A_{14}-A_{15}^q}{1+\eta^{q-1}}}\eta^{q-1}
       =-C_{15}^q\eta^{q-1}$.
Hence $V_0=V\oplus V^\bot$.
\end{proof}

\begin{Corollary}\label{kappa q, nond, 3D4}
$\kappa_q|_{V\times V}$ is a non-degenerate $\mathbb{F}_q$-bilinear form.
\end{Corollary}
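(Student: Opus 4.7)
The plan is to reduce the non-degeneracy of $\kappa_q|_{V\times V}$ to the direct-sum decomposition that was just established in Corollary \ref{pi proj,3D4}. The $\mathbb{F}_q$-bilinearity of the restriction is automatic, since $\kappa_q\colon V_0\times V_0\to \mathbb{F}_q$ is an $\mathbb{F}_q$-bilinear form by Proposition \ref{kappa_q,3D4} and $V$ is an $\mathbb{F}_q$-subspace of $V_0$ by \ref{V, 3D4}; so only the non-degeneracy claim actually requires an argument.

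To verify non-degeneracy, I would identify the radical
\[
\mathrm{rad}\bigl(\kappa_q|_{V\times V}\bigr)=\{A\in V \mid \kappa_q(A,B)=0 \text{ for all } B\in V\}
\]
with $V\cap V^\bot$. Indeed, if $A\in V$ lies in the radical, then by the symmetry of $\kappa_q$ (Proposition \ref{kappa_q,3D4}) the condition $\kappa_q(A,B)=0$ for every $B\in V$ is the same as $\kappa_q(B,A)=0$ for every $B\in V$, which is precisely the defining condition for $A\in V^\bot$ given in Lemma \ref{VT,3D4}. Hence $A\in V\cap V^\bot$. Conversely, any $A\in V\cap V^\bot$ is obviously in the radical.

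Once this identification is in place, I would invoke Corollary \ref{pi proj,3D4}, which yields $V_0=V\oplus V^\bot$ and in particular $V\cap V^\bot=\{0\}$. Therefore the radical is trivial and $\kappa_q|_{V\times V}$ is non-degenerate.

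I do not expect any real obstacle here: the entire content of the statement has already been packed into Lemma \ref{VT,3D4} (the explicit description of $V^\bot$) and into Corollary \ref{pi proj,3D4} (the direct-sum decomposition, which in turn rests on the careful choice of $\eta\in \mathbb{F}_{q^3}\backslash \mathbb{F}_q$ with $\eta^{q^2}+\eta^q+\eta=1$ so that $1+\eta^{q-1}\ne 0$). The present corollary is a purely formal consequence of those facts together with the symmetry of $\kappa_q$.
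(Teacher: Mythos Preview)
Your proposal is correct and is exactly the argument the paper intends: the corollary is stated without proof immediately after Corollary~\ref{pi proj,3D4}, and your identification of the radical of $\kappa_q|_{V\times V}$ with $V\cap V^\bot=\{0\}$ is precisely the one-line justification being left to the reader.
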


\begin{Lemma}\label{pi=piJ, 3D4}
$\pi=\pi\circ \pi_J$.
In particular, if $A\in V_0$ and $\pi_J(A)\in V$, then $\pi(A)=\pi_J(A)$.
\end{Lemma}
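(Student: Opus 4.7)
The plan is to verify the identity $\pi = \pi \circ \pi_J$ directly from the defining formulas, and then extract the particular case as an immediate consequence of $\pi|_V = \mathrm{id}_V$.

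First I would inspect the formula defining $\pi$ in \ref{pi,3D4}. The image $\pi(A)$ only involves the entries $A_{12}, A_{13}, A_{14}, A_{15}, A_{16}, A_{17}, A_{23}$ of $A$, that is, precisely the entries indexed by
\[
J = \{(1,2),(1,3),(1,4),(1,5),(1,6),(1,7),(2,3)\}.
\]
Meanwhile $\pi_J$ is the projection $V_0 \to V_J$ sending $A$ to $\sum_{(i,j)\in J} A_{i,j} e_{i,j}$, so $\pi_J(A)$ agrees with $A$ at every position in $J$ and is zero elsewhere. Hence substituting $\pi_J(A)$ into the formula for $\pi$ yields the same matrix as $\pi(A)$, which gives $\pi = \pi \circ \pi_J$.

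For the ``in particular'' statement, suppose $A \in V_0$ with $\pi_J(A) \in V$. Applying the identity just proven gives $\pi(A) = \pi(\pi_J(A))$, and since $\pi_J(A) \in V$ and $\pi|_V = \mathrm{id}_V$ by \ref{pi,3D4}, the right-hand side equals $\pi_J(A)$. Thus $\pi(A) = \pi_J(A)$.

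There is essentially no obstacle here: the argument reduces to reading off the support of the formula for $\pi$. The only thing to be careful about is confirming that \emph{all} seven coefficients appearing in the defining expression for $\pi(A)$ are pulled from positions lying in $J$; this is transparent from the display in \ref{pi,3D4}.
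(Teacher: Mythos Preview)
Your proof is correct and follows essentially the same approach as the paper: observe from the explicit formula in \ref{pi,3D4} that $\pi(A)$ depends only on the entries $A_{i,j}$ with $(i,j)\in J$, whence $\pi=\pi\circ\pi_J$, and then apply $\pi|_V=\mathrm{id}_V$ for the particular case.
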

\begin{proof}
Let $A\in V_0$,
the $\pi(A)$ depends on the entries of $\{A_{i,j}{\,|\,} (i,j)\in J\}$ by \ref{pi_q, 3D4},
thus $\pi(A)=\pi\circ \pi_J(A)$.
Let $\pi_J(A)\in V$.
Then $\pi(A)=\pi\circ \pi_J(A) = \pi(\pi_J(A)) = \pi_J(A)$
since $\pi|_V=\mathrm{id}_V$.
\end{proof}

\begin{Corollary}\label{kappa q, pi-3D4}
Suppose $A,B\in V_0$,
such that $\mathrm{supp}(A)\cap\mathrm{supp}(B)\subseteq J$ and $\pi_J(A)\in V$.
Then
\begin{align*}
 \kappa_q(A,B)=\kappa_q(\pi(A),B)=\kappa_q(A,\pi(B))
= \kappa_q(\pi(A),\pi(B))=\kappa_q|_{V\times V}(\pi(A),\pi(B)).
\end{align*}
\end{Corollary}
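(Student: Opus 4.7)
The plan is to reduce each term in the chain to $\kappa_q(\pi(A),\pi(B))$ by two ``adjoint-like'' moves in succession: first replace $A$ by $\pi(A)$ using the support hypothesis, then replace $B$ by $\pi(B)$ using the orthogonal decomposition $V_0 = V \oplus V^\bot$ from Corollary \ref{pi proj,3D4}. The hypothesis $\pi_J(A) \in V$ is used precisely to identify $\pi(A)$ with $\pi_J(A)$ via Lemma \ref{pi=piJ, 3D4}.

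First I would establish $\kappa_q(A,B) = \kappa_q(\pi(A),B)$. Expanding $\kappa(A,B) = \sum_{(i,j)\in \Squarb} A_{i,j} B_{i,j}$ and discarding the terms with $(i,j)\notin J$ (which vanish by $\mathrm{supp}(A)\cap\mathrm{supp}(B)\subseteq J$) collapses the sum to $\sum_{(i,j)\in J} A_{i,j} B_{i,j} = \kappa(\pi_J(A),B)$. Since $\pi_J(A)\in V$, Lemma \ref{pi=piJ, 3D4} gives $\pi_J(A) = \pi(A)$; applying $\pi_q$ then yields the desired identity.

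Next I would prove $\kappa_q(A,B) = \kappa_q(A,\pi(B))$, equivalently $\kappa_q(A, B-\pi(B)) = 0$. The key observation is that $\pi(B)\in V$ has support contained in $J$, so $B-\pi(B)$ agrees with $B$ at every position outside $J$; consequently the support hypothesis is inherited: $\mathrm{supp}(A)\cap\mathrm{supp}(B-\pi(B)) \subseteq J$. Thus the reasoning of the previous step applied to the pair $\bigl(A,\, B-\pi(B)\bigr)$ gives $\kappa_q(A, B-\pi(B)) = \kappa_q(\pi(A), B-\pi(B))$, and this vanishes because $\pi(A)\in V$ while $B-\pi(B)\in V^\bot$ by Corollary \ref{pi proj,3D4}.

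Chaining the two reductions gives the first three equalities; the middle one $\kappa_q(\pi(A),B) = \kappa_q(\pi(A),\pi(B))$ follows by applying the second reduction with $A$ replaced by $\pi(A)$, which is legitimate since $\pi_J(\pi(A)) = \pi(A)\in V$. The final equality $\kappa_q(\pi(A),\pi(B)) = \kappa_q|_{V\times V}(\pi(A),\pi(B))$ is simply the definition of the restriction. The sole point requiring care is checking that the support condition propagates to $B-\pi(B)$ in the second step, and this is immediate from $\mathrm{supp}(\pi(B))\subseteq J$; no real obstacle arises and the proof is a short chain of identifications.
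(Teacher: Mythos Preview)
Your proof is correct and follows essentially the same approach as the paper: both arguments first use the support hypothesis together with Lemma~\ref{pi=piJ, 3D4} to replace $A$ by $\pi(A)$, and then use the orthogonal decomposition $V_0=V\oplus V^{\bot}$ from Corollary~\ref{pi proj,3D4} to replace $B$ by $\pi(B)$. The paper's chain is slightly more streamlined---it goes $\kappa_q(A,B)=\kappa_q(\pi(A),B)=\kappa_q(\pi(A),\pi(B))$ directly and then recovers $\kappa_q(A,\pi(B))$ by noting $\mathrm{supp}(\pi(B))\subseteq J$---whereas you take the extra care of verifying that the support hypothesis propagates to $B-\pi(B)$, but the substance is the same.
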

\begin{proof}
We have
$
\kappa_q(A,B)
{=} \pi_q\circ \kappa (A,B)
{=}\pi_q\circ \kappa (\pi_J(A),B)=\kappa_q(\pi_J(A),B)
 \stackrel{\ref{pi=piJ, 3D4}}{=} \kappa_q(\pi(A),B)\\
 \stackrel{\ref{pi proj,3D4}}{=}\kappa_q(\pi(A),\pi(B)){=}\kappa_q|_{V\times V}(\pi(A),\pi(B))
 \stackrel{\pi(B)\in V}{=} \kappa_q(A,\pi(B))$.
\end{proof}

\begin{Lemma}\label{pi_J,AgT,3D4}
Let  $A\in V$ and $g\in G$.
Then
$\pi_J(Ag^\top)\in V$.
In particular, $\pi_J(Ag^\top)=\pi(Ag^\top)$.
\end{Lemma}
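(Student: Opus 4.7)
The plan is to verify the conclusion by direct matrix-entry computation, exploiting the fact that elements of $V$ have support confined to rows $1$ and $2$, so that multiplying on the right by the lower-unitriangular matrix $g^\top$ keeps all non-zero rows inside those two rows and produces entries at positions in $J$ that are simple linear combinations of the original entries of $A$.

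First I would write $A = A_{12}e_{12}+A_{13}e_{13}+A_{14}e_{14}+A_{15}e_{15}+A_{16}e_{16}+A_{17}e_{17}+A_{23}e_{23}$ with $A_{14}=A_{15}^q$ and $A_{16},A_{17},A_{23}\in\mathbb{F}_q$, and observe that $(Ag^\top)_{i,k}=\sum_j A_{ij}g_{kj}$ vanishes outside rows $1$ and $2$. Using $g_{ii}=1$ and the triangularity of $g$, the seven entries of $\pi_J(Ag^\top)$ reduce to
\begin{align*}
(Ag^\top)_{1,7}&=A_{17},\qquad (Ag^\top)_{1,6}=A_{16}+A_{17}g_{67},\qquad (Ag^\top)_{2,3}=A_{23},\\
(Ag^\top)_{1,5}&=A_{15}+A_{16}g_{56}+A_{17}g_{57},\\
(Ag^\top)_{1,4}&=A_{14}+A_{15}g_{45}+A_{16}g_{46}+A_{17}g_{47},
\end{align*}
together with analogous (but irrelevant) expressions at $(1,2)$ and $(1,3)$.

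Next I would read off the defining conditions of $V$ one by one. The entries $A_{17}$ and $A_{23}$ lie in $\mathbb{F}_q$ by hypothesis, and $A_{16}+A_{17}g_{67}$ lies in $\mathbb{F}_q$ because $g_{67}\in\mathbb{F}_q$ by the definition of $G_8(q^3)$. The remaining condition $(Ag^\top)_{1,4}=(Ag^\top)_{1,5}^q$ is the main point: using $g_{4,5}=0$, $A_{14}=A_{15}^q$, and the Frobenius-invariance of $A_{16},A_{17}\in\mathbb{F}_q$, the difference equals
\begin{align*}
A_{16}(g_{46}-g_{56}^q)+A_{17}(g_{47}-g_{57}^q),
\end{align*}
which vanishes because the intermediate-group constraints built into Definition/Lemma~\ref{larger group G8-3D4} give precisely $g_{46}=g_{56}^q$ and $g_{47}=g_{57}^q$. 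This shows $\pi_J(Ag^\top)\in V$, and the final sentence $\pi_J(Ag^\top)=\pi(Ag^\top)$ then follows immediately from Lemma~\ref{pi=piJ, 3D4}.

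I do not foresee a genuine obstacle: the statement is really a bookkeeping assertion designed precisely so that the $q$-power relations imposed on $G_8(q^3)$ match the $q$-power relation defining $V$. The only care needed is to keep track of which entries of $g$ are forced to be Frobenius-related, and to notice that $A_{16},A_{17}\in\mathbb{F}_q$ is exactly what lets us move the $q$-th power past these scalars so that the two $G$-constraints $g_{4,6}=g_{5,6}^q$ and $g_{4,7}=g_{5,7}^q$ can cancel the corresponding contributions.
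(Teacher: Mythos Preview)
Your proof is correct and follows essentially the same approach as the paper: a direct entry-by-entry verification that $(Ag^\top)_{17},(Ag^\top)_{16},(Ag^\top)_{23}\in\mathbb{F}_q$ and $(Ag^\top)_{14}=(Ag^\top)_{15}^q$, using precisely the $G_8(q^3)$-constraints $g_{45}=0$, $g_{67}\in\mathbb{F}_q$, $g_{46}=g_{56}^q$, $g_{47}=g_{57}^q$ together with $A_{16},A_{17}\in\mathbb{F}_q$. The only cosmetic difference is that you invoke Lemma~\ref{pi=piJ, 3D4} explicitly for the final identity $\pi_J(Ag^\top)=\pi(Ag^\top)$, whereas the paper treats it as implicit in the ``In particular'' clause.
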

\begin{proof}
Let $A\in V$ and $g\in G$. It is sufficient to prove that
 ${(Ag^\top)_{15}^q=(Ag^\top)_{14}}$
 and
 ${(Ag^\top)_{ij}\in \mathbb{F}_q}$
 for all $(i,j)\in \{(2,3), (1,6), (1,7)\}$.
We have
$(Ag^\top)_{15}^q
                       =(\sum_{j=1}^{8} A_{1j}g_{5j})^q
                =A_{15}^q+A_{16}g_{56}^q+A_{17}g_{57}^q
\stackrel{\ref{larger group G8-3D4} \& \ref{V, 3D4}}
{=} A_{14}+A_{15}g_{45}+A_{16}g_{46}+A_{17}g_{47}
= (Ag^\top)_{14}$,
and
 ${(Ag^\top)_{16}}
 =A_{16}+A_{17}g_{67}=A_{16}-A_{17}g_{23}\in\mathbb{F}_{q}$,
 ${(Ag^\top)_{17}}=A_{17}\in\mathbb{F}_{q}$,
 ${(Ag^\top)_{23}}=A_{23}\in\mathbb{F}_{q}$.
Thus $\pi_J(Ag^\top)\in V$.
\end{proof}

\begin{Lemma}\label{intersection in J, 3D4}
Let $A,B\in V$ and $g,h\in A_8(q^3)$.
Then we have that
\begin{align*}
\mathrm{supp}(Bh^\top)\cap \UR \subseteq  J
\quad \text{ and } \quad
\mathrm{supp}(Bh^\top)\cap \mathrm{supp}(Ag)\subseteq  J.
\end{align*}
\end{Lemma}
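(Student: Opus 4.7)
The plan is to read off both containments directly from the entrywise product formula, together with the very restricted support of $A, B \in V$ (rows $1$ and $2$ only, with row-$1$ entries only in columns $2,\ldots,7$) and the upper unitriangularity of $g, h \in A_8(q^3)$.

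For the first inclusion I would compute $(Bh^\top)_{ij} = \sum_k B_{ik} h_{jk}$. Since $B$ vanishes outside rows $1$ and $2$, so does $Bh^\top$. In row $1$, the nonzero $B_{1k}$ force $k \in \{2,\ldots,7\}$, while $h_{jk}\neq 0$ requires $j \leq k$; therefore $j \leq 7$. In particular $(Bh^\top)_{1,8} = 0$, and the strictly upper-triangular entries of row $1$ lie in $\{(1,2),\ldots,(1,7)\} \subseteq J$. In row $2$ only $k=3$ contributes (via $B_{23}$), and $h_{j3} = 0$ for $j > 3$, so the only strictly upper-triangular position that can survive is $(2,3)\in J$. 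This establishes $\mathrm{supp}(Bh^\top) \cap \UR \subseteq J$.

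For the second inclusion I would reduce to the first by proving $\mathrm{supp}(Ag) \subseteq \UR$; then
\[
\mathrm{supp}(Bh^\top) \cap \mathrm{supp}(Ag) \;\subseteq\; \mathrm{supp}(Bh^\top) \cap \UR \;\subseteq\; J.
\]
The containment $\mathrm{supp}(Ag) \subseteq \UR$ is immediate from $(Ag)_{ij} = \sum_k A_{ik} g_{kj}$: the support of $A$ inside $J$ forces $k > i$ (and kills rows $i \geq 3$ entirely), while upper-triangularity of $g$ forces $k \leq j$, so $i < j$ whenever $(Ag)_{ij} \neq 0$.

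The main obstacle is really just careful bookkeeping on row/column indices, not any deep structural input. The one slightly delicate step is ruling out $(Bh^\top)_{1,8}$, which relies precisely on $B_{18} = 0$ paired with the fact that $h_{8k} \neq 0$ forces $k = 8$. Note that the twisted structure of $G_8(q^3)$ plays no role here; only the shape of $V$ and upper-triangularity in $A_8(q^3)$ are used.
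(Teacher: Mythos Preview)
Your proof is correct and is essentially the same argument as the paper's, just carried out explicitly: the paper phrases the computation as ``$h$ acts by column operations from left to right and $h^\top$ by column operations from right to left,'' which is exactly what your index chase $(Bh^\top)_{ij}=\sum_k B_{ik}h_{jk}$ and $(Ag)_{ij}=\sum_k A_{ik}g_{kj}$ encodes. Your reduction of the second inclusion to $\mathrm{supp}(Ag)\subseteq\UR$ and then to the first inclusion is precisely the intended logic.
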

\begin{proof}
Every $h\in A_8(q^3)$ acts on $V$ from the right by a sequence of
elementary column operations
from left to right,
and $h^\top$ acts on $V$ from the right by a series of
elementary column operations from right to left.
Then the statements are obtained.
\end{proof}

\begin{Corollary}
Let $A,B\in V$ and $g\in G$.
Then
\begin{align*}
 \kappa_q(A, Bg)=\kappa_q(A, \pi(Bg))=\kappa_q(Ag^\top, B)=\kappa_q(\pi(Ag^\top), B).
\end{align*}
\end{Corollary}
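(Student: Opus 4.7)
The plan is to treat the four-term chain as three separate equalities, each of which reduces immediately to an earlier result rather than to fresh calculation. So I would split the task into the two ``outer'' equalities, obtained by invoking Corollary \ref{kappa q, pi-3D4} twice, and the ``middle'' equality, obtained from adjointness of the trace form under transposition.

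For the outer equality $\kappa_q(A, Bg) = \kappa_q(A, \pi(Bg))$ I would apply Corollary \ref{kappa q, pi-3D4} with the two arguments being $A$ and $Bg$: because $A\in V$ we have $\mathrm{supp}(A)\subseteq J$ by \ref{V, 3D4}, which forces the support condition $\mathrm{supp}(A)\cap\mathrm{supp}(Bg)\subseteq J$ to hold trivially, and also $\pi_J(A)=A\in V$, so both hypotheses of \ref{kappa q, pi-3D4} are in place and the ``$\kappa_q(X,Y)=\kappa_q(X,\pi(Y))$'' clause of that corollary yields the identity. For the other outer equality $\kappa_q(Ag^\top, B) = \kappa_q(\pi(Ag^\top), B)$ I would apply the same corollary with arguments $Ag^\top$ and $B$: the support condition follows from $B\in V$, and the condition $\pi_J(Ag^\top)\in V$ is exactly the content of Lemma \ref{pi_J,AgT,3D4}.

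For the middle equality $\kappa_q(A, Bg)=\kappa_q(Ag^\top, B)$ I would use cyclicity of the trace:
\begin{align*}
\kappa(A, Bg)=\mathrm{tr}(A^\top Bg)=\mathrm{tr}(gA^\top B)=\mathrm{tr}((Ag^\top)^\top B)=\kappa(Ag^\top, B),
\end{align*}
and then compose on the outside with the $\mathbb{F}_q$-linear map $\pi_q$ of \ref{pi_q, 3D4}, which gives the analogous identity for $\kappa_q=\pi_q\circ\kappa$.

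I do not expect a real obstacle here: the genuine technical content has already been absorbed into Lemma \ref{pi_J,AgT,3D4} (that right multiplication by $g^\top$ for $g\in G_8(q^3)$ keeps $Ag^\top$ inside $V$ after projection along $J$) and into Corollary \ref{kappa q, pi-3D4} (the compatibility of $\kappa_q$ with $\pi$). The part that requires the most care is simply remembering to apply \ref{kappa q, pi-3D4} in the correct argument slot in each of the two outer equalities, and to check that the hypothesis $\pi_J(\cdot)\in V$ is verified by the appropriate earlier lemma in each case.
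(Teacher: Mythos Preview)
Your proposal is correct and follows essentially the same three-step approach as the paper: apply Corollary \ref{kappa q, pi-3D4} for each outer equality (invoking Lemma \ref{pi_J,AgT,3D4} for the hypothesis $\pi_J(Ag^\top)\in V$), and use trace cyclicity for the middle one. The only difference is that where the paper cites Lemma \ref{intersection in J, 3D4} to verify the support hypothesis of \ref{kappa q, pi-3D4}, you observe more directly that one argument already lies in $V$ (hence has support in $J$), making the intersection condition automatic; this is a harmless simplification.
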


\begin{proof}
If $A,B\in V$ and $g\in G$, then
$\kappa_q(A, Bg)
 \stackrel{\substack{ \ref{kappa q, pi-3D4}
                    \,\&\, \ref{intersection in J, 3D4}}}
                    {=} \kappa_q(A, \pi(Bg))$,
and
$ \kappa_q(A, Bg)
 =\kappa_q(Ag^\top, B)
 \stackrel{\substack{ \ref{kappa q, pi-3D4}
                    \,\&\, \ref{pi_J,AgT,3D4}\\
                    \&\, \ref{intersection in J, 3D4}}}
                    {=}\kappa_q(\pi(Ag^\top), B)$.
\end{proof}

\begin{Proposition}[Group action of $G$ on $V$]\label{circ action-3D4}
The map
\begin{align*}
-\circ- \colon V\times G \to  V: (A,g)\mapsto A\circ g:=\pi(Ag)
\end{align*}
is a group action,
and the elements of the group $G$ act as
$\mathbb{F}_q$-automorphisms.
\end{Proposition}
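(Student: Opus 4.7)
The plan is to verify the three defining properties of a group action—well-definedness, the identity law, and associativity—together with $\mathbb{F}_q$-linearity of each $g$-action. The first three are essentially formal: $A\circ g = \pi(Ag)\in V$ by \ref{pi,3D4}, $\mathbb{F}_q$-linearity of $-\circ g$ in the first argument follows from $\mathbb{F}_q$-linearity of $\pi$ (\ref{pi,3D4}) combined with $\mathbb{F}_q$-bilinearity of matrix multiplication, and $A\circ I_8 = \pi(A) = A$ holds because $\pi|_V = \mathrm{id}_V$ (\ref{pi,3D4}).

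The nontrivial step is associativity $(A\circ g)\circ h = A\circ(gh)$. Setting $B := Ag \in V_0$, associativity of matrix multiplication rewrites this identity as $\pi(\pi(B)\,h) = \pi(Bh)$, equivalently $\pi(Ch) = 0$ where $C := B - \pi(B) \in V^\perp$ by \ref{pi proj,3D4}. Since $\kappa_q|_{V\times V}$ is non-degenerate (\ref{kappa q, nond, 3D4}) and $\pi(Ch)\in V$, it suffices to prove $\kappa_q(Z, Ch) = 0$ for every $Z \in V$.

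To show this, I would use symmetry of $\kappa_q$ (\ref{kappa_q,3D4}) to rewrite $\kappa_q(Z, Ch) = \kappa_q(Zh^\top, C)$ and then apply \ref{kappa q, pi-3D4} to the pair $(Zh^\top, C)$. Its two hypotheses would be checked as follows: $\pi_J(Zh^\top)\in V$ is precisely \ref{pi_J,AgT,3D4} for $Z \in V$ and $h \in G$, while $\mathrm{supp}(Zh^\top)\cap\mathrm{supp}(C) \subseteq J$ follows by row/column bookkeeping in the style of \ref{intersection in J, 3D4}: both matrices are supported in rows $1$ and $2$ (because $\mathrm{supp}(Z) \subseteq J$ and $\mathrm{supp}(A) \subseteq J$, while $h^\top$ is lower triangular); the row-$1$ entries of $Zh^\top$ vanish beyond column $7$, and the $(1,2)$ and $(1,3)$ entries of $C$ vanish because $\pi$ preserves the $(1,2)$ and $(1,3)$ entries of $B$; in row $2$ the supports intersect only at $(2,3)\in J$. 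Corollary \ref{kappa q, pi-3D4} then gives $\kappa_q(Zh^\top, C) = \kappa_q(\pi(Zh^\top), C)$, which vanishes because $\pi(Zh^\top)\in V$ and $C \in V^\perp$.

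The main obstacle I anticipate is the careful support bookkeeping. Implicit in the step $C\in V^\perp$ is the concrete compatibility between the explicit formulas defining $\pi$ in \ref{pi,3D4} and the description of $V^\perp$ in \ref{VT,3D4}: the coordinate-wise identity $C_{14} = -C_{15}^q\eta^{q-1}$ for the residue $C = B - \pi(B)$ is forced by the fact that $1 + \eta^{1-q^2}\neq 0$ (so the defining fractions for $\pi(B)_{14}, \pi(B)_{15}$ are well-defined), while $C_{16}, C_{17}, C_{23}\in\ker\pi_q$ comes from $\pi_q^2 = \pi_q$ (\ref{pi_q, 3D4}). With those structural identities in hand, the support bookkeeping is clean and the bilinear-form argument above delivers associativity without any direct matrix computation.
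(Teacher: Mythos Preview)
Your proof is correct and follows essentially the same approach as the paper: both arguments test associativity against an arbitrary element of $V$ via the non-degenerate form $\kappa_q|_{V\times V}$, move the factor $h$ across the pairing by transposition, and invoke \ref{kappa q, pi-3D4}, \ref{pi_J,AgT,3D4}, and (the content of) \ref{intersection in J, 3D4} to replace matrices by their $\pi$-projections. Your reformulation in terms of the residue $C = Ag - \pi(Ag)\in V^{\bot}$ is a cosmetic variant of the paper's direct equality chain $\kappa_q(B, A\circ(gh)) = \cdots = \kappa_q(B, (A\circ g)\circ h)$; note that your support hypothesis $\mathrm{supp}(Zh^\top)\cap\mathrm{supp}(C)\subseteq J$ also follows immediately from \ref{intersection in J, 3D4} since $\mathrm{supp}(C)\subseteq \mathrm{supp}(Ag)\cup J$.
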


\begin{proof}
Let $A, B\in V$, $g, h\in G$ and $c\in \mathbb{F}_q$.
Since $\pi$ is $\mathbb{F}_q$-linear,
it is enough to prove
$A\circ (gh) {=}(A\circ g) \circ h$.
We have
\begin{align*}
 &  \kappa_q(B, A\circ(gh))
\stackrel{\ref{kappa q, pi-3D4}}{=}\kappa_q(B, A(gh))
= \kappa_q(Bh^\top, Ag)
\stackrel{\substack{ \ref{kappa q, pi-3D4}
                    \,\&\, \ref{pi_J,AgT,3D4}\\
                    \&\, \ref{intersection in J, 3D4}}}
                    {=} \kappa_q(\pi(Bh^\top), Ag)\\
\stackrel{\ref{kappa q, pi-3D4}}{=}&
                 \kappa_q(\pi(Bh^\top), A\circ g)
\stackrel{\ref{kappa q, pi-3D4}}{=} \kappa_q(Bh^\top, A\circ g)
                 =\kappa_q(B, (A\circ g)h)
\stackrel{\ref{kappa q, pi-3D4}}{=}\kappa_q(B, (A\circ g)\circ h).
\end{align*}
By \ref{kappa q, nond, 3D4}, $A\circ (gh) {=}(A\circ g) \circ h$.
Thus the proof is completed.
\end{proof}

\begin{Corollary}\label{circ g-circ gT-3D4}
If $A,B\in V$ and $g\in G$, then
$\kappa_q(A, B\circ g)=\kappa_q(A, Bg)=\kappa_q(Ag^\top, B)=\kappa_q(\pi(A g^\top), B)$.
\end{Corollary}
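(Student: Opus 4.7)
The plan is to chain together three equalities, each a direct application of a tool already established in this section. Fix $A, B \in V$ and $g \in G$; I will verify the equalities in the order in which they are written.

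For the first equality $\kappa_q(A, B\circ g) = \kappa_q(A, Bg)$, I unfold the definition $B\circ g := \pi(Bg)$ from \ref{circ action-3D4}, so the claim becomes $\kappa_q(A, \pi(Bg)) = \kappa_q(A, Bg)$. This is precisely Corollary \ref{kappa q, pi-3D4} applied with $A$ in the first slot and $Bg$ in the second: the hypothesis $\pi_J(A) \in V$ is immediate because $A \in V \subseteq V_J$ and $\pi_J$ restricts to the identity on $V_J$, while $\mathrm{supp}(A) \cap \mathrm{supp}(Bg) \subseteq J$ holds since $\mathrm{supp}(A) \subseteq \mathrm{supp}(V) = J$ by \ref{V, 3D4}.

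For the middle equality $\kappa_q(A, Bg) = \kappa_q(Ag^\top, B)$ I use the cyclic invariance of the trace: $\kappa(A, Bg) = \mathrm{tr}(A^\top B g) = \mathrm{tr}((Ag^\top)^\top B) = \kappa(Ag^\top, B)$, after which applying the $\mathbb{F}_q$-linear map $\pi_q$ to both sides yields the equality of $\kappa_q$-values. For the final equality $\kappa_q(Ag^\top, B) = \kappa_q(\pi(Ag^\top), B)$ I invoke Corollary \ref{kappa q, pi-3D4} once more, now with $Ag^\top$ in the first slot and $B$ in the second: the condition $\pi_J(Ag^\top) \in V$ is exactly the content of Lemma \ref{pi_J,AgT,3D4}, and $\mathrm{supp}(Ag^\top) \cap \mathrm{supp}(B) \subseteq J$ is immediate from $\mathrm{supp}(B) \subseteq J$ (or alternatively from Lemma \ref{intersection in J, 3D4}).

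The proof itself is essentially bookkeeping; the substantive work was already carried out in Lemma \ref{pi_J,AgT,3D4}, where the defining relations of $G_8(q^3)$ (in particular $u_{4,5}=0$ and $u_{i,j+1}=u_{i,j}^q$) are precisely what force $\pi_J(Ag^\top)$ back into $V$ for $A \in V$ and $g \in G$. With that lemma at hand, the corollary reduces to threading three applications of \ref{kappa q, pi-3D4} around one trace manipulation, and I foresee no additional obstacles.
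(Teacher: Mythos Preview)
Your proof is correct and follows essentially the same route as the paper: the corollary is just a restatement, in terms of the $\circ$-action, of the unlabelled corollary immediately preceding \ref{circ action-3D4}, whose proof invokes exactly \ref{kappa q, pi-3D4}, \ref{pi_J,AgT,3D4}, and \ref{intersection in J, 3D4} in the same way you do. Your verification of the hypotheses (in particular that $\pi_J(A)\in V$ when $A\in V$, and that $\pi_J(Ag^\top)\in V$ via \ref{pi_J,AgT,3D4}) is precisely what is needed.
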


Henceforth denote $\pi(A g^{-\top})$  ($A\in V$, $g\in G$) by $A.g$.
Then this is a group action of $G$ by \ref{circ action-3D4}.
By \cite[\S 2.1]{Markus1}, we get the following result:
\begin{Corollary}\label{action A dot g-3D4}
There exists an unique linear action $-.-$ of $G$ on $V$:
\begin{align*}
-.- \colon V\times G  \to  V: (A,g)\mapsto A.g=\pi(A g^{-\top})
\end{align*}
such that
$\kappa_q|_{V\times V}(A.g,B)=\kappa_q|_{V\times V}(A,B\circ g^{-1})$
 for all  $B\in V$.
\end{Corollary}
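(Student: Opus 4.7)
The plan is to take $A.g := \pi(A g^{-\top})$ as the candidate, then verify in turn: (i) the adjoint identity; (ii) that $-.-$ is a linear group action; (iii) uniqueness. All three reduce to applying already-established results, with non-degeneracy of $\kappa_q|_{V\times V}$ doing the decisive work.

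First I would verify the adjoint relation. For $A,B\in V$ and $g\in G$, \ref{circ g-circ gT-3D4} with $g$ replaced by $g^{-1}$ gives
\[
\kappa_q(A, B\circ g^{-1}) \;=\; \kappa_q(\pi(A g^{-\top}), B) \;=\; \kappa_q(A.g, B).
\]
Since $A.g, B\in V$, this reads as the asserted identity $\kappa_q|_{V\times V}(A.g,B) = \kappa_q|_{V\times V}(A, B\circ g^{-1})$.

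Next I would verify that $-.-$ is indeed a group action, and that $g$ acts $\mathbb{F}_q$-linearly. Linearity in $A$ is immediate: $\pi$ is an $\mathbb{F}_q$-linear map by \ref{pi,3D4}, and right-multiplication by $g^{-\top}$ is $\mathbb{F}_{q^3}$-linear (hence $\mathbb{F}_q$-linear) in $A$. For the action axioms, I would use the already-proved fact that $-\circ-$ is a right action (\ref{circ action-3D4}) together with the adjoint relation: for all $A,B\in V$ and $g,h\in G$,
\begin{align*}
\kappa_q\bigl(A.(gh),\,B\bigr)
&= \kappa_q\bigl(A,\,B\circ (gh)^{-1}\bigr)
 = \kappa_q\bigl(A,\,(B\circ h^{-1})\circ g^{-1}\bigr)\\
&= \kappa_q\bigl(A.g,\,B\circ h^{-1}\bigr)
 = \kappa_q\bigl((A.g).h,\,B\bigr).
\end{align*}
Since $\kappa_q|_{V\times V}$ is non-degenerate (\ref{kappa q, nond, 3D4}), this forces $A.(gh) = (A.g).h$. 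For the identity, $A.1 = \pi(A) = A$ by $\pi|_V = \mathrm{id}_V$ (\ref{pi,3D4}).

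Finally, uniqueness: suppose $-\star-$ is any map $V\times G\to V$ satisfying $\kappa_q|_{V\times V}(A\star g, B) = \kappa_q|_{V\times V}(A, B\circ g^{-1})$ for every $B\in V$. Then $\kappa_q|_{V\times V}(A\star g - A.g, B) = 0$ for all $B\in V$, and non-degeneracy of $\kappa_q|_{V\times V}$ forces $A\star g = A.g$. There is no real obstacle here; the content was already distilled into \ref{circ g-circ gT-3D4} and \ref{kappa q, nond, 3D4}, and the present statement is essentially a bookkeeping consequence — the only thing to be careful about is making sure each application of \ref{kappa q, pi-3D4} and \ref{circ g-circ gT-3D4} respects the hypothesis $\mathrm{supp}(A)\cap\mathrm{supp}(B)\subseteq J$, which is guaranteed here because both $A$ and $B$ lie in $V$ throughout.
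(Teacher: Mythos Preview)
Your proof is correct and follows essentially the same route as the paper. The paper's justification is much terser --- it simply notes that $A.g=\pi(Ag^{-\top})$ is a group action by the same reasoning as \ref{circ action-3D4} and then cites \cite[\S 2.1]{Markus1} for the adjoint property and uniqueness --- whereas you have unpacked that citation and verified each piece directly via \ref{circ g-circ gT-3D4} and non-degeneracy (\ref{kappa q, nond, 3D4}).
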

\begin{Notation}  \label{f-3D4}
 Set $f:=\pi|_G \colon G \to  V$.
\end{Notation}

\begin{Lemma}\label{f(x)g,3D4}
Let $x, g \in G$, $1:=1_G=I_8$ and $0:=O_8=O_{8\times 8}$. Then
$f(x)g\equiv(x-1)g \mod V^\bot$.
In particular, $f(x)\equiv x-1 \mod V^\bot$.
\end{Lemma}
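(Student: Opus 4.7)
The plan is to split the claim into the ``in particular'' clause and then bootstrap it to the general congruence by showing that the element $(x-1) - f(x)$ stays in $V^\bot$ after right multiplication by any $g \in G$, which I will verify by an entry-by-entry inspection against the description of $V^\bot$ in \ref{VT,3D4}.

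First I would dispose of the special case $f(x) \equiv x - 1 \bmod V^\bot$. Since $f = \pi|_G$ and $\pi$ is $\mathbb{F}_q$-linear with $\pi(I_8) = O_8$ by \ref{pi,3D4}, I have $\pi(x-1) = \pi(x) - \pi(1) = f(x)$. Combined with the direct-sum decomposition $V_0 = V \oplus V^\bot$ from \ref{pi proj,3D4}, in which $\pi$ is the projection onto the first summand, this immediately yields $(x-1) - f(x) = (x-1) - \pi(x-1) \in V^\bot$.

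For the main statement, set $C := (x-1) - f(x) \in V^\bot$; the task becomes showing $Cg \in V^\bot$. Using the description of $V^\bot$ from \ref{VT,3D4}, I must verify the vanishings $(Cg)_{1,2} = (Cg)_{1,3} = 0$, the twisting relation $(Cg)_{1,4} = -(Cg)_{1,5}^q\, \eta^{q-1}$, and the containments $(Cg)_{1,6},(Cg)_{1,7},(Cg)_{2,3} \in \ker \pi_q$. The vanishings (and in fact also $(Cg)_{2,3} = 0$) follow immediately from upper-triangularity of $g$ together with the facts that $C$ has no non-zero row-$1$ entries in columns $2, 3$ and that $C_{2,3} = x_{2,3} - \pi_q(x_{2,3}) = 0$, using $x_{2,3}\in\mathbb{F}_q$ and $\pi_q|_{\mathbb{F}_q} = \mathrm{id}$. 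For the twist, the relation $g_{4,5} = 0$ from \ref{larger group G8-3D4} forces $(Cg)_{1,4} = C_{1,4}$ and $(Cg)_{1,5} = C_{1,5}$, so the twisting identity is inherited from $C \in V^\bot$.

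The heart of the argument is the pair of $\ker \pi_q$ conditions at $(1,6)$ and $(1,7)$. Here I would exploit the defining equations $g_{4,6} = g_{5,6}^q$ and $g_{4,7} = g_{5,7}^q$ from \ref{larger group G8-3D4}, combined with $C_{1,4} = -C_{1,5}^q\, \eta^{q-1}$ from $C\in V^\bot$, to rewrite
\begin{equation*}
(Cg)_{1,6} = -\eta^{q-1}(C_{1,5}\, g_{5,6})^q + C_{1,5}\, g_{5,6} + C_{1,6},
\end{equation*}
and analogously for $(Cg)_{1,7}$, where an additional summand $C_{1,6}\, g_{6,7}$ appears. The decisive identity is then
\begin{equation*}
\pi_q\bigl(-\eta^{q-1} a^q + a\bigr) = \phi_0\bigl(\eta a - \eta^q a^q\bigr) = 0 \qquad (a \in \mathbb{F}_{q^3}),
\end{equation*}
which collapses upon expanding $\phi_0$ and applying $\eta^{q^3} = \eta$; this is precisely where the normalisation $\eta + \eta^q + \eta^{q^2} = 1$ pays off. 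The remaining pieces $C_{1,6}, C_{1,7}$ already lie in $\ker \pi_q$ by $C \in V^\bot$, and the extra term $C_{1,6}\, g_{6,7}$ stays in $\ker\pi_q$ because $g_{6,7}\in\mathbb{F}_q$ while $\ker\pi_q$ is an $\mathbb{F}_q$-subspace by \ref{pi_q, 3D4}. The main obstacle is really just the bookkeeping of matching each entry of $Cg$ with the correct combination of structural data from $G_8(q^3)$ and $V^\bot$; no deeper ingredient than the special choice of $\eta$ is needed.
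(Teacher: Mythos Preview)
Your argument is correct. One small remark: the identity $\phi_0(\eta a - \eta^q a^q) = 0$ follows from $\phi_0(b - b^q) = 0$ for any $b \in \mathbb{F}_{q^3}$ (trace of an element of the form $b - \mathrm{Frob}(b)$ vanishes), so the normalisation $\eta + \eta^q + \eta^{q^2} = 1$ is not actually used here; only $\eta^{q^3} = \eta$ is needed.

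Your route differs from the paper's. After establishing the special case $f(x) \equiv x-1 \bmod V^\bot$ in the same way, the paper does not inspect entries of $Cg$ at all. Instead it argues via the bilinear form: for every $A \in V$ one has
\[
\kappa_q(A, f(x)g) = \kappa_q(Ag^\top, f(x)) = \kappa_q(\pi(Ag^\top), f(x)) = \kappa_q(\pi(Ag^\top), x-1) = \kappa_q(Ag^\top, x-1) = \kappa_q(A, (x-1)g),
\]
invoking \ref{kappa q, pi-3D4}, \ref{pi_J,AgT,3D4} and \ref{intersection in J, 3D4}, and then concludes by non-degeneracy (\ref{kappa q, nond, 3D4}). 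The paper's proof is shorter and conceptual, leaning on the already-assembled machinery about $\kappa_q$ and $\pi$; your proof is more elementary and self-contained, trading that machinery for a direct verification against the explicit description of $V^\bot$ in \ref{VT,3D4} and the defining relations of $G_8(q^3)$ in \ref{larger group G8-3D4}. Both are valid; the paper's approach generalises more readily, while yours makes transparent exactly which structural constraints on $G_8(q^3)$ (namely $g_{4,5}=0$, $g_{4,6}=g_{5,6}^q$, $g_{4,7}=g_{5,7}^q$, $g_{6,7}\in\mathbb{F}_q$) are responsible for $V^\bot$ being stable under right multiplication by $G$.
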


\begin{proof}
Let $x,g\in G$.
We have that $f(x)=\pi(x)
 \stackrel{\pi(1)=0}{=}\pi(x)-\pi(1)
\stackrel{\pi\text{ linear}}{=}\pi(x-1)$,
so $f(x)\equiv x-1 \mod V^\bot$.
For all $ A \in V$,
$\kappa_q(A, f(x)g)=\kappa_q(Ag^\top, f(x))
\stackrel{\ref{kappa q, pi-3D4}}{=}\kappa_q(\pi(Ag^\top), f(x))
{=}\kappa_q(\pi(Ag^\top), x-1)
\stackrel{\ref{intersection in J, 3D4}}{=}\kappa_q(Ag^\top, x-1)
    {=}\kappa_q(A, (x-1)g)$.
By \ref{kappa q, nond, 3D4},
$f(x)g\equiv(x-1)g \mod V^\bot$.
\end{proof}

\begin{Proposition}\label{f(xg)-3D4}
If $x, g \in G$, then
$ f(xg)=f(x)\circ g+f(g)$.
\end{Proposition}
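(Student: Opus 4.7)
The plan is to work from the algebraic identity $xg - 1 = (x-1)g + (g-1)$ in $V_0$, apply the projection $\pi$ to both sides, and exploit the fact that $\pi$ annihilates $V^\bot$ (by \ref{pi proj,3D4}) in order to pass between the raw matrix differences $x-1,\,g-1$ and the cocycle values $f(x),\,f(g)$.

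First I would observe that, since $\pi(1)=\pi(I_8)=0$ and $\pi$ is $\mathbb{F}_q$-linear by \ref{pi,3D4}, one immediately has
$$f(xg) \;=\; \pi(xg) \;=\; \pi(xg - 1) \;=\; \pi\bigl((x-1)g\bigr) + \pi(g-1) \;=\; \pi\bigl((x-1)g\bigr) + f(g).$$
So it only remains to identify $\pi\bigl((x-1)g\bigr)$ with $f(x)\circ g$.

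Second, by \ref{f(x)g,3D4} we have $f(x)g - (x-1)g \in V^\bot$, and because $\pi\colon V_0 = V\oplus V^\bot \to V$ is precisely the projection along $V^\bot$ (\ref{pi proj,3D4}), it vanishes on $V^\bot$. Hence
$$\pi\bigl((x-1)g\bigr) \;=\; \pi\bigl(f(x)g\bigr) \;=\; f(x)\circ g,$$
the last equality being simply the definition of the action $\circ$ in \ref{circ action-3D4}. Assembling the two steps gives $f(xg) = f(x)\circ g + f(g)$, as required.

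There is no serious obstacle at this point: the hard work has already been carried out in \ref{f(x)g,3D4} (which itself rests on the careful construction of $V$, $\pi$, and on the non-degeneracy of $\kappa_q|_{V\times V}$), and the present statement is essentially a formal corollary. The one subtlety worth highlighting in the exposition is that one uses the decomposition $V_0 = V\oplus V^\bot$ in two different roles: it tells us that $\pi$ vanishes on $V^\bot$, and it licenses writing $f(x)g - (x-1)g \in V^\bot$ as the genuine kernel condition that makes the difference disappear after $\pi$.
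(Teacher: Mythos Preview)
Your proof is correct. Both your argument and the paper's rest on the same algebraic identity $xg-1=(x-1)g+(g-1)$ together with the congruence $f(x)g\equiv(x-1)g\bmod V^\bot$ from \ref{f(x)g,3D4}; the only difference is in how the congruence is exploited. The paper tests both sides of the claimed equality against an arbitrary $A\in V$ via $\kappa_q$, using \ref{kappa q, pi-3D4} to pass to $\pi(f(x)g)$ and then invoking non-degeneracy (\ref{kappa q, nond, 3D4}) to conclude. You instead apply $\pi$ directly and use the fact from \ref{pi proj,3D4} that $\pi$ kills $V^\bot$, which is slightly more direct and avoids appealing to non-degeneracy at this stage. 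Both routes are short and essentially equivalent; yours has the minor advantage of not re-invoking the bilinear form machinery that was already used inside the proof of \ref{f(x)g,3D4}.
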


\begin{proof}
Let $x, g \in G$.
For all $A\in V$,
$\kappa_q(A,f(xg))
    \stackrel{\ref{f(x)g,3D4}}{=}  \kappa_q(A,xg-1)
    = \kappa_q(A,(x-1)g+(g-1))
  \stackrel{\ref{f(x)g,3D4}}{=}
   \kappa_q(A,f(x)g+f(g))
    \stackrel{\ref{kappa q, pi-3D4}}{=}\kappa_q(A,\pi(f(x)g)+f(g))
    \stackrel{\ref{kappa q, pi-3D4}}{=}\kappa_q(A,f(x)\circ g+f(g))$.
Thus  $f(xg)=f(x)\circ g+f(g)$ by \ref{kappa q, nond, 3D4}.
\end{proof}


\begin{Proposition}[Bijective 1-cocycle of ${^3{D}_4^{syl}(q^3)}$]\label{f_U bij-3D4}
Let $U={^3{D}_4^{syl}(q^3)}$.
Then $f|_U:=\pi|_U \colon  U \to  V $ is a bijection.
In particular, $f|_U$ is a bijective 1-cocycle of $U$ in $V$.
\end{Proposition}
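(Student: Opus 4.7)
The plan is to prove $f|_U\colon U\to V$ is a bijection; once this is established, the 1-cocycle identity $f|_U(uv)=f|_U(u)\circ v+f|_U(v)$ for all $u,v\in U$ will follow immediately by restricting Proposition~\ref{f(xg)-3D4} to $U\leq G$. Since $|U|=q^{12}$ by Proposition~\ref{sylow p-subg, 3D4} and $\dim_{\mathbb{F}_q}V=12$ by~\ref{V, 3D4}, we have $|V|=q^{12}=|U|$, so it suffices to show that $f|_U=\pi|_U$ is injective.

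For injectivity I would write $u=x(t_1,t_2,t_3,t_4,t_5,t_6)$ using the parametrisation of Proposition~\ref{sylow p-subg, 3D4}, read off the seven $J$-entries of $\pi(u)$ using the formula in~\ref{pi,3D4}, and recover the six parameters in a triangular cascade. The entries $\pi(u)_{12}=t_1$ and $\pi(u)_{13}=-t_3$ are immediate, and $\pi(u)_{23}=\pi_q(t_2)=t_2$ holds because $t_2\in\mathbb{F}_q$ and $\pi_q$ restricts to the identity there (\ref{pi_q, 3D4}). Next, substituting the explicit expressions $u_{14}=t_1t_3^q+t_4$ and $u_{15}=t_1t_3^{q^2}+t_4^{q^2}$ into the numerator $u_{14}^{q^2}+u_{15}\eta^{1-q^2}$ defining $\pi(u)_{15}$ and using $t_4^{q^3}=t_4$, the two contributions involving $t_4^{q^2}$ combine with coefficient $1+\eta^{1-q^2}$; dividing by this factor, which is nonzero by the choice of $\eta$, yields $\pi(u)_{15}=t_4^{q^2}+\Phi(t_1,t_3)$ for an explicit $\Phi$, so $t_4$ is uniquely determined once $t_1,t_3$ are. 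Finally, $\pi(u)_{16}=\pi_q(t_1t_4^q)+t_5$ and $\pi(u)_{17}=\pi_q(-t_1t_3^{q^2+q}+t_3t_4^q)+t_6$ recover $t_5$ and $t_6$, again because $t_5,t_6\in\mathbb{F}_q$ and $\pi_q|_{\mathbb{F}_q}=\mathrm{id}$.

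The main obstacle is the $t_4$-step, and it is what motivates the awkward-looking definition of $\pi$ on the $(1,4)$- and $(1,5)$-entries: the two $\mathbb{F}_{q^3}$-coordinates $u_{14},u_{15}$, which in $U$ are coupled via a Frobenius twist but in the larger group $G$ range independently, must be projected onto a single $\mathbb{F}_{q^3}$-parameter of $V$, and the nonvanishing of $1+\eta^{1-q^2}$ (guaranteed by \cite[3.4]{sun2016arxiv}) is precisely what makes that projection invertible in $t_4$. Everything else is bookkeeping; once injectivity is in hand, bijectivity follows by counting, and the 1-cocycle property is inherited from Proposition~\ref{f(xg)-3D4}, completing the proof.
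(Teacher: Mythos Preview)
Your proposal is correct and follows essentially the same approach as the paper: both compute $\pi(u)$ explicitly for $u=x(t_1,\dots,t_6)$, exploit the triangular dependence of the $J$-entries on the parameters (with the key step being the $(1,5)$-entry and the nonvanishing of $1+\eta^{1-q^2}$), and combine this with $|U|=|V|=q^{12}$ and Proposition~\ref{f(xg)-3D4}. The only cosmetic difference is that the paper phrases the cascade as surjectivity (writing down $t_i$ in terms of $A_{ij}$), whereas you phrase it as injectivity (recovering the $t_i$ from $\pi(u)$); these are the same computation read in opposite directions.
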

\begin{proof}
 Let $x:=x(t_1, t_2, t_3, t_4, t_5, t_6)\in U$.
 Then we have that
\begin{align*}
 & f|_U(x)
=\pi(x(t_1, t_2, t_3, t_4, t_5, t_6))\\
=&\left(
\newcommand{\mc}[3]{\multicolumn{#1}{#2}{#3}}
\begin{array}{cccccccc}\cline{2-7}
\mc{1}{c|}{}
& \mc{1}{c|}{t_1}
& \mc{1}{c|}{-t_3}
& \mc{1}{c|}{\begin{array}{l}
(\frac{t_1^{q^2}t_3+t_1t_3^{q^2}\eta^{1-q^2}}{1+\eta^{1-q^2}})^q\\
+t_4\\
 \end{array}}
& \mc{1}{c|}{\begin{array}{l}
\frac{t_1^{q^2}t_3+t_1t_3^{q^2}\eta^{1-q^2}}{1+\eta^{1-q^2}}\\
+t_4^{q^2}
 \end{array}}
& \mc{1}{c|}{\begin{array}{l}
\pi_q(t_1{t_4^q})\\
+t_5
 \end{array}}
& \mc{1}{c|}{
\rule{0pt}{28pt}
\begin{array}{l}
\pi_q( -t_1{t_3^{q^2+q}}\\
      {\quad}+t_3{t_4^q})\\
+t_6
 \end{array}}
& \\\cline{2-7}
× & \mc{1}{c|}{}
& \mc{1}{c|}{
\begin{array}{l}
t_2\\
\end{array}}
& × & × & ×& & ×\\\cline{3-3}
\end{array}
\right)
\end{align*}
Since $f\colon G\to V$ is well defined, $f|_U$ is well defined.
For all $A=(A_{ij})\in V$,
there exists an element $x:=x(t_1, t_2, t_3, t_4, t_5, t_6)\in U$ such that
$f|_U(x)=A$,
where
$t_1=A_{12}$,
$t_2=A_{23}\in {\mathbb{F}_{q}}$,
$t_3=-A_{13}$,
$t_4=A_{15}^q+\frac{A_{12}A_{13}^q+A_{12}^qA_{13}\eta^{q-1}}{1+\eta^{q-1}}$,
$t_5=A_{16}-\pi_q(A_{12}(A_{15}^{q^2}+\frac{A_{12}^qA_{13}^{q^2}+A_{12}^{q^2}A_{13}^q\eta^{q^2-q}}{1+\eta^{q^2-q}}))\in {\mathbb{F}_{q}}$
and
   $t_6=A_{17}+
      \pi_q(A_{12}A_{13}^{q^2+q}+A_{13}(A_{15}^{q^2}+\frac{A_{12}^qA_{13}^{q^2}+A_{12}^{q^2}A_{13}^q\eta^{q^2-q}}{1+\eta^{q^2-q}}))
      \in {\mathbb{F}_{q}}$.
Thus $f|_U$ is surjective.
Since $|U|=q^{12}=|V|$,
 $f|_U$ is bijective.
Hence
$f|_U$ is a bijective 1-cocycle of $U$ in $V$
by \ref{f(xg)-3D4}.
\end{proof}

\begin{Corollary}
\label{mono. lin.-3D4}
 $f=\pi|_G  \colon G \to  V$ is a surjective 1-cocycle of $G$ in $V$,
 and $(f,{\kappa_q}|_{V\times V})$ is a monomial linearisation for $G=G_8(q^3)$.
\end{Corollary}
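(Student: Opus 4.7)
The plan is to read off each of the three assertions directly from results already established earlier in this section, so the ``proof'' will essentially be a bookkeeping exercise pointing at the correct reference.

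First I would show that $f$ is a $1$-cocycle. By Notation \ref{f-3D4}, $f = \pi|_G$; Proposition \ref{f(xg)-3D4} states precisely that $f(xg) = f(x)\circ g + f(g)$ for all $x,g \in G$, which is the defining identity of a $1$-cocycle for the $G$-action $-\circ-$ on $V$ constructed in Proposition \ref{circ action-3D4} (where $G$ was also shown to act by $\mathbb{F}_q$-automorphisms). No further calculation is needed at this step.

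Next I would argue surjectivity of $f$ by reduction to $U$. Since $U = {^3}D_4^{syl}(q^3) \leqslant G = G_8(q^3)$, we have the factorisation $f|_U = \pi|_U$, and by Proposition \ref{f_U bij-3D4} the restriction $f|_U \colon U \to V$ is already a bijection, hence in particular surjective. Therefore $f(G) \supseteq f(U) = V$, and $f$ maps onto $V$.

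Finally I would assemble the pieces to identify $(f,\kappa_q|_{V\times V})$ as a monomial linearisation for $G$ in the sense of \cite[\S 2.1]{Markus1}. The required ingredients are: a finite-dimensional $\mathbb{F}_q$-vector space $V$ equipped with a non-degenerate symmetric $\mathbb{F}_q$-bilinear form (supplied by Corollary \ref{kappa q, nond, 3D4}), a linear $G$-action on $V$ by $\mathbb{F}_q$-automorphisms compatible with the form through $\kappa_q(A.g,B) = \kappa_q(A, B\circ g^{-1})$ (supplied by Proposition \ref{circ action-3D4} and Corollary \ref{action A dot g-3D4}), and a surjective $1$-cocycle $G \to V$ (supplied by the previous two paragraphs). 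Putting these together yields the claim.

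The only mild subtlety is that all the hard work has already been absorbed into the lemmas above; the main obstacle was not this corollary itself but rather Proposition \ref{f(xg)-3D4}, which in turn relied on the careful treatment of $\pi$, $\pi_J$ and the interplay of $\kappa_q$ with the column action via Lemmas \ref{pi=piJ, 3D4}, \ref{pi_J,AgT,3D4} and \ref{intersection in J, 3D4}. Once those are in place the corollary is immediate.
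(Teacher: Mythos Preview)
Your proposal is correct and matches the paper's approach exactly: the corollary is stated without proof immediately after Proposition~\ref{f_U bij-3D4}, and the implicit argument is precisely what you spell out---the 1-cocycle identity from Proposition~\ref{f(xg)-3D4}, surjectivity inherited from the bijection $f|_U$, and the remaining data for a monomial linearisation supplied by Corollaries~\ref{kappa q, nond, 3D4} and~\ref{action A dot g-3D4}.
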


\begin{Corollary}
\label{mono. lin. for U-3D4}
 $(f|_{^3{D}_4^{syl}(q^3)},{\kappa_q}|_{V\times V})$
is a monomial linearisation
for ${^3{D}_4^{syl}(q^3)}$.
\end{Corollary}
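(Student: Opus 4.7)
The plan is to assemble three facts that have already been proved in the excerpt: the bijective $1$-cocycle statement for $f|_U$ in Proposition \ref{f_U bij-3D4}, the non-degeneracy of $\kappa_q|_{V\times V}$ in Corollary \ref{kappa q, nond, 3D4}, and the fact that $(f, \kappa_q|_{V\times V})$ is already a monomial linearisation for the larger group $G=G_8(q^3)$ in Corollary \ref{mono. lin.-3D4}. Since ${^3{D}_4^{syl}(q^3)}\leqslant G_8(q^3)$, essentially everything needed is inherited by restriction, so the task is just to verify each ingredient of the definition of monomial linearisation (as in \cite[\S 2.1]{Markus1}) for the subgroup $U={^3}D_4^{syl}(q^3)$.

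First I would spell out the data. The $\mathbb{F}_q$-vector space $V$ and the non-degenerate symmetric $\mathbb{F}_q$-bilinear form $\kappa_q|_{V\times V}$ are unchanged, so condition (ii) of a monomial linearisation is immediate from Corollary \ref{kappa q, nond, 3D4}. Next, the group action $-\circ-\colon V\times G\to V$ of Proposition \ref{circ action-3D4} restricts to an action $-\circ-\colon V\times U\to V$ by $\mathbb{F}_q$-automorphisms because $U\leqslant G$; there is nothing further to check here, as associativity and linearity are inherited. This gives the required $U$-module structure on $V$.

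Then I would verify the cocycle condition. Proposition \ref{f(xg)-3D4} shows $f(xg)=f(x)\circ g+f(g)$ for all $x,g\in G$; restricting both arguments to $U$ and using that $f|_U=\pi|_U$ coincides with $f$ on $U$ by definition, we obtain $f|_U(xg)=f|_U(x)\circ g+f|_U(g)$ for all $x,g\in U$. Combined with Proposition \ref{f_U bij-3D4}, which asserts that $f|_U$ is bijective, this shows $f|_U$ is a (bijective) $1$-cocycle of $U$ into $V$.

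Finally, putting all three pieces together---$V$ as an $\mathbb{F}_q U$-module, the non-degenerate $\mathbb{F}_q$-bilinear form $\kappa_q|_{V\times V}$, and the bijective $1$-cocycle $f|_U\colon U\to V$---yields a monomial linearisation of $U$ in the sense of \cite[\S 2.1]{Markus1}, with the additional feature that bijectivity (rather than just surjectivity) allows the identification of $\mathbb{C}U$ with the monomial module on $V$. I do not expect any genuine obstacle: all the real content has been done at the $G_8(q^3)$-level, and the present statement is a formal restriction. The only point requiring a moment's care is confirming that the image of $f|_U$ lands in $V$ and that the $G$-action indeed restricts to a $U$-action, both of which are automatic since $U\leqslant G$ and $V$ is $G$-stable under $-\circ-$.
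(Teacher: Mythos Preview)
Your proposal is correct and matches the paper's approach: the corollary is stated without proof, immediately following Corollary~\ref{mono. lin.-3D4} and Proposition~\ref{f_U bij-3D4}, precisely because it is the formal restriction you describe. The only minor difference is that you spell out the verification of each ingredient in detail, whereas the paper leaves it entirely implicit.
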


Now we  establish the monomial $G$-module $\mathbb{C}{\left(^3D_4^{syl}(q^3)\right)}$,
which is essential for the construction of the supercharacter theory for ${^3D_4^{syl}(q^3)}$.
\begin{Theorem}[Fundamental theorem for ${^3D_4^{syl}(q^3)}$]\label{fund thm U-3D4}
Let $U={^3D_4^{syl}(q^3)}$, $G=G_8(q^3)$ and
 \begin{align*}
  [A]=\frac{1}{|U|}\sum_{u\in U}{\overline{\chi_A(u)}u}
  \qquad \text{for all } A\in V,
 \end{align*}
 where $\chi_{A}(u)=\vartheta\kappa_q(A,f(u))$.
Then the set $\{[A] \mid A\in V\}$
forms a $\mathbb{C}$-basis for the complex group algebra $\mathbb{C}U$.
For all $g\in G,{\ }A\in V$,
let $[A]*g:=\chi_{A.g}(g)[A.g]=\vartheta\kappa_q(A.g,f(g))[A.g]$.
Then $\mathbb{C}U$ is a monomial $\mathbb{C}G$-module.
The restriction of the $*$-operation to $U$ is given by the usual right multiplication
  of $U$ on $\mathbb{C}U$, i.e.
\begin{align*}
 [A]*u=[A]u=\frac{1}{|U|}\sum_{y\in U}{\overline{\chi_A(y)}yu}
 \qquad \text{for all $u\in U,{\ } A\in V$}.
\end{align*}
\end{Theorem}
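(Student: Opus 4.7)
The plan is to establish the three assertions in sequence, treating the theorem as a concrete realisation of Jedlitschky's monomial linearisation framework whose hypotheses are now in place: $f=\pi|_G$ is a surjective $1$-cocycle of $G$ in $V$ (\ref{mono. lin.-3D4}), its restriction to $U$ is bijective (\ref{f_U bij-3D4}), and $\kappa_q|_{V\times V}$ is non-degenerate (\ref{kappa q, nond, 3D4}). For the first assertion that $\{[A]\mid A\in V\}$ is a $\mathbb{C}$-basis of $\mathbb{C}U$, I would transport the pairing $(A,u)\mapsto\kappa_q(A,f(u))$ along the bijection $f|_U\colon U\to V$. After composing with a nontrivial linear character $\vartheta$ of $(\mathbb{F}_q,+)$, nontriviality of $\vartheta$ and non-degeneracy of $\kappa_q|_{V\times V}$ ensure that $\{\chi_A\}_{A\in V}$ is identified with the full character group of $(V,+)$ pulled back to $U$. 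Column orthogonality of additive characters then yields $\sum_{A\in V}\chi_A(u)\overline{\chi_A(u')}=|U|\,\delta_{u,u'}$, which rearranges to $u=\sum_{A\in V}\chi_A(u)[A]$; this simultaneously gives the inverse change-of-basis matrix and forces linear independence, and since $|V|=q^{12}=|U|$ the family is a basis.

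Second, to show that $[A]*g:=\vartheta\kappa_q(A.g,f(g))[A.g]$ defines a $\mathbb{C}G$-module structure, the unit axiom is immediate from $A.1=A$, $f(1)=O_8$ and $\vartheta(0)=1$. For associativity I would expand $([A]*g)*h$ and $[A]*(gh)$, use the group action property $A.(gh)=(A.g).h$ together with the $1$-cocycle identity $f(gh)=f(g)\circ h+f(h)$ from \ref{f(xg)-3D4}, and reduce the equality of scalar coefficients to
\[
\kappa_q\bigl((A.g).h,\,f(g)\circ h\bigr)=\kappa_q(A.g,\,f(g)),
\]
which is exactly the adjoint identity $\kappa_q(X.h,Y)=\kappa_q(X,Y\circ h^{-1})$ from \ref{action A dot g-3D4} applied with $X=A.g$ and $Y=f(g)\circ h$. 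Monomiality is then manifest from the formula itself, each $[A]$ being sent to a scalar multiple of the single basis vector $[A.g]$.

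Third, to check that for $u\in U$ the $*$-action coincides with ordinary right multiplication in $\mathbb{C}U$, I would compute $[A]u=\frac{1}{|U|}\sum_{y\in U}\overline{\chi_A(y)}\,yu$, reindex via $z=yu$ so that $y=zu^{-1}$, and apply the $1$-cocycle identity to the decomposition $z=(zu^{-1})u$ to obtain $f(zu^{-1})=(f(z)-f(u))\circ u^{-1}$. The adjoint relation then rewrites $\kappa_q(A,f(zu^{-1}))=\kappa_q(A.u,\,f(z)-f(u))$, so that pulling the scalar $\vartheta(\kappa_q(A.u,f(u)))$ out of the sum recovers $\vartheta\kappa_q(A.u,f(u))[A.u]=[A]*u$. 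The main obstacle is bookkeeping rather than conceptual: the associativity and restriction steps interleave the adjoint identity with the non-abelian $1$-cocycle identity, and the sign and conjugation conventions for $\vartheta$ together with the placement of inverses in $u^{-1}$ need careful tracking; once the order of substitutions is fixed, no new structural ingredients beyond the previously established lemmas are required.
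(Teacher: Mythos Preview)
Your proof is correct and follows exactly the approach underlying the paper's argument: the paper simply verifies that $(f,\kappa_q|_{V\times V})$ is a monomial linearisation for $G$ with $f|_U$ bijective and then invokes Jedlitschky's general result \cite[2.1.35]{Markus1}, whereas you have written out in full the content of that cited theorem (orthogonality for the basis claim, the cocycle--adjoint computation for associativity, and the reindexing argument for the restriction to $U$). The two are the same proof at different levels of detail.
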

\begin{proof}
By \ref{mono. lin.-3D4}, $(f,{\kappa_q}|_{V\times V})$ is a monomial linearisation for $G$,
 satisfying that $f|_U$ is a bijective map.
By \ref{action A dot g-3D4}, $A.u:=\pi(Au^{-\top})$.
Thus the whole theorem is proved
in view of
\cite[2.1.35]{Markus1}.
\end{proof}

\begin{Comparison}[Monomial linearisations]
\label{com:monomial modules-3D4}
Let $U$ be $A_n(q)$, $D_n^{syl}(q)$ or ${^3}D_4^{syl}(q^3)$,
$G$ an intermediate group of $U$,
$V_0:=V_{\Squarb}$,
$V$ a subspace of $V_0$,
$J:=\mathrm{supp}(V)$,
$f\colon G\to V$ a surjective 1-cocycle of $G$
such that $f|_U$ is injective,
$\kappa\colon V \times V\to \mathbb{F}_q \text{ (or $\mathbb{F}_{q^3}$)}$
a trace form
such that
$(f,\kappa|_{V\times V})$ is a monomial linearisation for $G$
(i.e. $(f|_U,\kappa|_{V\times V})$ is a monomial linearisation for $U$).
Then the
corresponding notations for
$A_n(q)$ (see \cite[2.2]{Markus1}),
$D_n^{syl}(q)$ (see \cite[3.1]{Markus1})
and ${^3}D_4^{syl}(q^3)$ (see \S \ref{sec: monomial 3D4-module})
are listed as follows:
\begin{align*}
\begin{array}{|l|l|l|l|l|l|l|}
\hline
\multicolumn{1}{|c|}{U}
& \multicolumn{1}{c|}{G}
& \multicolumn{1}{c|}{V_0}
& \multicolumn{1}{c|}{J}
& \multicolumn{1}{c|}{V}
& \multicolumn{1}{c|}{f\colon G\to V}
& \multicolumn{1}{c|}{\kappa|_{V\times V}} \\\hline
A_n(q)
& A_{n}(q)
& \mathrm{Mat}_{n\times n}(q)
& \UR
& V=V_{\URb}
& f(g)=\pi_{\URb}(g)=g-I_n
& \kappa|_{V\times V}
\\\hline
D_n^{syl}(q)
& A_{2n}(q)
& \mathrm{Mat}_{2n\times 2n}(q)
& {\UP}
& V=V_{\UPb}
& f(g)=\pi_{\UPb}(g)
& \kappa|_{V\times V}
\\\hline
{^3}D_4^{syl}(q^3)
& G_8(q^3)
& \mathrm{Mat}_{8\times 8}(q^3)
& J
& V\neq V_J
& f(g)=\pi(g)\neq \pi_J(g)
& \kappa_q|_{V\times V}
\\\hline
\end{array}
\end{align*}
\end{Comparison}
From now on, we mainly consider the regular right module $(\mathbb{C}U,*)_{\mathbb{C}U}=\mathbb{C}U_{\mathbb{C}U}$.


\section{${{^3}D}^{syl}_4{(q^3)}$-orbit modules}
\label{sec:U-orbit modules-3D4}

Let $U:={{^3}D}^{syl}_4{(q^3)}$, $A\in V$ and $x_i(t_i)\in U$
($i=1,2,\dots,6$, $t_1, t_3, t_4 \in \mathbb{F}_{q^3}$, $t_2, t_5, t_6 \in \mathbb{F}_{q}$).
In this section,
we classify the $U$-orbit modules (\ref{prop:class orbit-3D4}),
and determine the stabilizers $\mathrm{Stab}_U(A)$ for all $A\in V$ (\ref{prop: 3D4-stab}).

If $A\in V$, then the $U$\textbf{-orbit module}
 associated to $A$ is
$\mathbb{C}\mathcal{O}_U([A])
:=\mathbb{C}\{[A]u \mid u\in U\}
=\mathbb{C}\{[A.u] \mid u\in U\}
=\mathbb{C}\text{-span}\left\{[C]\mid C\in \mathcal{O}_U(A)\right\}$,
where
$\mathcal{O}_U(A):=\left\{A.g \mid  g\in U\right\}$
is the \textbf{orbit} of $A$ under the operation $-.-$ defined in \ref{action A dot g-3D4}.
Similarly, we define the $G$\textbf{-orbit module}
$\mathbb{C}\mathcal{O}_G([A])$ for all $A\in V$.
The \textbf{stabilizer}
 $\mathrm{Stab}_U(A)$ of $A$ in $U$ is
$
 \mathrm{Stab}_U(A)=\{u\in U  \mid  A.u=A\}$,
and
 $\mathrm{dim}_{\mathbb{C}}\mathbb{C}\mathcal{O}_U([A])
 =|\mathcal{O}_U(A)|
 =\frac{|U|}{|\mathrm{Stab}_U(A)|}$.
Let $A,B\in V$.
Then
$\mathbb{C}\mathcal{O}_U([A])$ and $\mathbb{C}\mathcal{O}_U([B])$
are identical (if $A.u=B$ for some $u\in U$)
or their intersection is $\{0\}$.
 Two $\mathbb{C}U$-modules having no nontrivial
 $\mathbb{C}U$-homomorphism between them are called
 \textbf{orthogonal}.
\begin{Lemma}\label{3D4-A.xi, figures}
Let $A\in V$ and $x_i(t_i)\in U$ with $i\in\{1,2,\dots,6\}$,
$t_1, t_3, t_4 \in \mathbb{F}_{q^3}$ and $t_2, t_5, t_6 \in \mathbb{F}_{q}$.
Then $A.x_i(t_i)$  and
the corresponding figures of moves
are obtained as follows:
\begin{alignat*}{2}
A.x_1(t_1)=& A.\left(x_{34}(t_1^q)x_{35}(t_1^{q^2})\right),
& \qquad
A.x_2(t_2)=& A.x_{23}(t_2),\\
A.x_3(t_3)=& A.\left(x_{24}(t_3^q)x_{25}(t_3^{q^2})\right),
& \qquad
A.x_4(t_4)=& A.x_{26}(t_4^q)\\
A.x_5(t_5)=& A,
& \qquad
A.x_6(t_6)=& A.
\end{alignat*}

\begin{align*}
\begin{tikzpicture}[scale=0.75]
\draw[step=0.5cm, gray, very thin](-2, -2)grid(2, 2);
\draw[very thick] (-2,0)--(2,0);
\draw[very thick] (0,-2)--(0,2);
\draw (-1.75,1.75) node{$\bullet$};
\draw (-1.25,1.25) node{$\bullet$};
\draw (-0.75,0.75) node{$\bullet$};
\draw (-0.25,0.25) node{$\bullet$};
\draw (0.25,-0.25) node{$\bullet$};
\draw (0.75,-0.75) node{$\bullet$};
\draw (1.25,-1.25) node{$\bullet$};
\draw (1.75,-1.75) node{$\bullet$};
\draw (1.75,1.75) node{$\bullet$};
\draw (1.25,1.25) node{$\bullet$};
\draw (0.75,0.75) node{$\bullet$};
\draw (0.25,0.25) node{$\bullet$};
\foreach \x in {-1.5, -1,...,1.0}
{
\draw [very thick] (\x,1.5)   rectangle +(0.5,0.5);
}
\draw [very thick] (-1.0,1.0) rectangle +(0.5,0.5);
\draw[->] (-1.25,2)--(-1.25, 2.25)--(-1.75,2.25)--(-1.75,2);
\draw (-1.5,2.5) node{$-t_1$};
\draw[->] (1.75,2)--(1.75, 2.25)--(1.25,2.25)--(1.25,2.03);
\draw(1.5,2.5) node{$t_1$};
\draw[->] (-0.15,2)--(-0.15, 2.25)--(-0.75,2.25)--(-0.75,2.03);
\draw (-0.4,2.55) node{$-t_1^q$};
\draw[->] (0.75,2)--(0.75, 2.25)--(0.25,2.25)--(0.25,2.03);
\draw(0.5,2.55) node{$t_1^q$};
\draw[ ->] (0.15,2.0)--(0.15, 3)--(-0.9,3)--(-0.9,2.03);
\draw (-0.65,3.4) node{$-t_1^{q^2}$};
\draw[->] (0.85,2.0)--(0.85, 3.5)--(-0.25,3.5)--(-0.25,3);
\draw(0.5,3.9) node{$t_1^{q^2}$};
\draw(0,-2.5) node{$A.x_1(t_1)$};
\end{tikzpicture}
\qquad
\begin{tikzpicture}[scale=0.75]
\draw[step=0.5cm, gray, very thin](-2, -2)grid(2, 2);
\draw[very thick] (-2,0)--(2,0);
\draw[very thick] (0,-2)--(0,2);
\draw (-1.75,1.75) node{$\bullet$};
\draw (-1.25,1.25) node{$\bullet$};
\draw (-0.75,0.75) node{$\bullet$};
\draw (-0.25,0.25) node{$\bullet$};
\draw (0.25,-0.25) node{$\bullet$};
\draw (0.75,-0.75) node{$\bullet$};
\draw (1.25,-1.25) node{$\bullet$};
\draw (1.75,-1.75) node{$\bullet$};
\draw (1.75,1.75) node{$\bullet$};
\draw (1.25,1.25) node{$\bullet$};
\draw (0.75,0.75) node{$\bullet$};
\draw (0.25,0.25) node{$\bullet$};
\foreach \x in {-1.5, -1,...,1.0}
{
\draw [very thick] (\x,1.5)   rectangle +(0.5,0.5);
}
\draw [very thick] (-1.0,1.0) rectangle +(0.5,0.5);
\draw[->] (-0.75,2)--(-0.75, 2.25)--(-1.85,2.25)--(-1.85,2);
\draw (-1.6,2.5) node{$t_3$};
\draw[->] (1.9,2)--(1.9, 2.25)--(0.75,2.25)--(0.75,2.03);
\draw(1.8,2.5) node{$-t_3$};
\draw[->] (-0.25,2)--(-0.25, 2.55)--(-1.15,2.55)--(-1.15,2.25);
\draw (-0.75,2.9) node{$-t_3^q$};
\draw[->] (1.15,2.25)--(1.15, 2.55)--(0.25,2.55)--(0.25,2.03);
\draw(0.75,2.9) node{$t_3^q$};
\draw[->] (0.15,2.0)--(0.15, 3.3)--(-1.3,3.3)--(-1.3,2.25);
\draw (-0.75,3.7) node{$-t_3^{q^2}$};
\draw[->] (1.3,2.25)--(1.3, 3.55)--(-0.25,3.55)--(-0.25,3.3);
\draw(0.75,3.95) node{$t_3^{q^2}$};
\draw(0,-2.5) node{$A.x_3(t_3)$};
\end{tikzpicture}
\qquad
\begin{tikzpicture}[scale=0.75]
\draw[step=0.5cm, gray, very thin](-2, -2)grid(2, 2);
\draw[very thick] (-2,0)--(2,0);
\draw[very thick] (0,-2)--(0,2);
\draw (-1.75,1.75) node{$\bullet$};
\draw (-1.25,1.25) node{$\bullet$};
\draw (-0.75,0.75) node{$\bullet$};
\draw (-0.25,0.25) node{$\bullet$};
\draw (0.25,-0.25) node{$\bullet$};
\draw (0.75,-0.75) node{$\bullet$};
\draw (1.25,-1.25) node{$\bullet$};
\draw (1.75,-1.75) node{$\bullet$};
\draw (1.75,1.75) node{$\bullet$};
\draw (1.25,1.25) node{$\bullet$};
\draw (0.75,0.75) node{$\bullet$};
\draw (0.25,0.25) node{$\bullet$};
\foreach \x in {-1.5, -1,...,1.0}
{
\draw [very thick] (\x,1.5)   rectangle +(0.5,0.5);
}
\draw [very thick] (-1.0,1.0) rectangle +(0.5,0.5);
\draw[->] (-0.25,2)--(-0.25, 2.25)--(-1.75,2.25)--(-1.75,2.0);
\draw (-0.8,2.5) node{$-t_4$};
\draw[->] (1.75,2)--(1.75, 2.25)--(0.25,2.25)--(0.25,2.03);
\draw(1.0,2.5) node{$t_4$};
\draw[->] (0.75,2.25)--(0.75, 2.8)--(-1.25,2.8)--(-1.25,2.25);
\draw (-0.25,3.1) node{$-t_4^q$};
\draw[->] (1.25,2.25)--(1.25, 3.45)--(-0.75,3.45)--(-0.75,2.8);
\draw(0.6,3.75) node{$t_4^q$};
\draw[->] (0.25,3.45)--(0.25, 3.75)--(-1.9,3.75)--(-1.9,2);
\draw (-1,4.15) node{$-t_4^{q^2}$};
\draw[->] (1.85,2)--(1.85, 4.25)--(-0.25,4.25)--(-0.25,3.75);
\draw(1,4.65) node{$t_4^{q^2}$};
\draw(0,-2.5) node{$A.x_4(t_4)$};
\end{tikzpicture}
\end{align*}
\begin{align*}
\begin{tikzpicture}[scale=0.75]
\draw[step=0.5cm, gray, very thin](-2, -2)grid(2, 2);
\draw[very thick] (-2,0)--(2,0);
\draw[very thick] (0,-2)--(0,2);
\draw (-1.75,1.75) node{$\bullet$};
\draw (-1.25,1.25) node{$\bullet$};
\draw (-0.75,0.75) node{$\bullet$};
\draw (-0.25,0.25) node{$\bullet$};
\draw (0.25,-0.25) node{$\bullet$};
\draw (0.75,-0.75) node{$\bullet$};
\draw (1.25,-1.25) node{$\bullet$};
\draw (1.75,-1.75) node{$\bullet$};
\draw (1.75,1.75) node{$\bullet$};
\draw (1.25,1.25) node{$\bullet$};
\draw (0.75,0.75) node{$\bullet$};
\draw (0.25,0.25) node{$\bullet$};
\foreach \x in {-1.5, -1,...,1.0}
{
\draw [very thick] (\x,1.5)   rectangle +(0.5,0.5);
}
\draw [very thick] (-1.0,1.0) rectangle +(0.5,0.5);
\draw[->] (-0.75,2)--(-0.75, 2.25)--(-1.25,2.25)--(-1.25,2.03);
\draw (-1.0,2.55) node{$-t_2$};
\draw[->] (1.25,2)--(1.25, 2.25)--(0.75,2.25)--(0.75,2.03);
\draw(1.0,2.55) node{$t_2$};
\draw(0,-2.5) node{$A.x_2(t_2)$};
\end{tikzpicture}
\qquad
\begin{tikzpicture}[scale=0.75]
\draw[step=0.5cm, gray, very thin](-2, -2)grid(2, 2);
\draw[very thick] (-2,0)--(2,0);
\draw[very thick] (0,-2)--(0,2);
\draw (-1.75,1.75) node{$\bullet$};
\draw (-1.25,1.25) node{$\bullet$};
\draw (-0.75,0.75) node{$\bullet$};
\draw (-0.25,0.25) node{$\bullet$};
\draw (0.25,-0.25) node{$\bullet$};
\draw (0.75,-0.75) node{$\bullet$};
\draw (1.25,-1.25) node{$\bullet$};
\draw (1.75,-1.75) node{$\bullet$};
\draw (1.75,1.75) node{$\bullet$};
\draw (1.25,1.25) node{$\bullet$};
\draw (0.75,0.75) node{$\bullet$};
\draw (0.25,0.25) node{$\bullet$};
\foreach \x in {-1.5, -1,...,1.0}
{
\draw [very thick] (\x,1.5)   rectangle +(0.5,0.5);
}
\draw [very thick] (-1.0,1.0) rectangle +(0.5,0.5);
\draw[->] (0.75,2)--(0.75, 2.25)--(-1.75,2.25)--(-1.75,2);
\draw (-0.25,2.5) node{$-t_5$};
\draw[->] (1.75,2)--(1.75, 2.85)--(-0.75,2.85)--(-0.75,2.25);
\draw(0.5,3.1) node{$t_5$};
\draw(0,-2.5) node{$A.x_5(t_5)$};
\end{tikzpicture}
\qquad
\begin{tikzpicture}[scale=0.75]
\draw[step=0.5cm, gray, very thin](-2, -2)grid(2, 2);
\draw[very thick] (-2,0)--(2,0);
\draw[very thick] (0,-2)--(0,2);
\draw (-1.75,1.75) node{$\bullet$};
\draw (-1.25,1.25) node{$\bullet$};
\draw (-0.75,0.75) node{$\bullet$};
\draw (-0.25,0.25) node{$\bullet$};
\draw (0.25,-0.25) node{$\bullet$};
\draw (0.75,-0.75) node{$\bullet$};
\draw (1.25,-1.25) node{$\bullet$};
\draw (1.75,-1.75) node{$\bullet$};
\draw (1.75,1.75) node{$\bullet$};
\draw (1.25,1.25) node{$\bullet$};
\draw (0.75,0.75) node{$\bullet$};
\draw (0.25,0.25) node{$\bullet$};
\foreach \x in {-1.5, -1,...,1.0}
{
\draw [very thick] (\x,1.5)   rectangle +(0.5,0.5);
}
\draw [very thick] (-1.0,1.0) rectangle +(0.5,0.5);
\draw[->] (1.25,2)--(1.25, 2.25)--(-1.75,2.25)--(-1.75,2);
\draw (-0.25,2.5) node{$-t_6$};
\draw[->] (1.75,2)--(1.75, 2.85)--(-1.25,2.85)--(-1.25,2.25);
\draw(0.25,3.1) node{$t_6$};
\draw(0,-2.5) node{$A.x_6(t_6)$};
\end{tikzpicture}
\end{align*}
\end{Lemma}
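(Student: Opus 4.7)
The approach is to decompose each $x_i(t_i)$ into Chevalley root subgroup generators, identify which factors act trivially on $V$ under the dot-action $A.g=\pi(Ag^{-\top})$ of Corollary~\ref{action A dot g-3D4}, and then apply the group-action identity $A.(gh)=(A.g).h$ to strip those factors away. Using Lemma~\ref{Chevalley, D4} and Definition/Lemma~\ref{root subgroups-3D4}, I would first record the identifications $x_{r_1}(t)=x_{1,2}(t)$, $x_{r_5}(t)=x_{1,3}(-t)$, $x_{r_8}(t)=x_{1,4}(t)$, $x_{r_9}(t)=x_{1,5}(t)$, $x_{r_{11}}(t)=x_{1,6}(t)$, $x_{r_{12}}(t)=x_{1,7}(t)$ (the ``top-row'' elements), and $x_{r_2}(t)=x_{2,3}(t)$, $x_{r_6}(t)=x_{2,4}(t)$, $x_{r_7}(t)=x_{2,5}(t)$, $x_{r_{10}}(t)=x_{2,6}(t)$ (the ``second-row'' elements). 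Direct checks of the relevant matrix-unit products from Lemma~\ref{Chevalley, D4} show that each top-row factor in the expansions of Definition/Lemma~\ref{root subgroups-3D4} commutes with the remaining factors, so that
\begin{align*}
x_1(t_1)&=x_{1,2}(t_1)\cdot\bigl(x_{3,4}(t_1^q)x_{3,5}(t_1^{q^2})\bigr),\\
x_3(t_3)&=x_{1,3}(-t_3)\cdot\bigl(x_{2,4}(t_3^q)x_{2,5}(t_3^{q^2})\bigr),\\
x_4(t_4)&=\bigl(x_{1,4}(t_4)x_{1,5}(t_4^{q^2})\bigr)\cdot x_{2,6}(t_4^q),\\
x_2(t_2)&=x_{2,3}(t_2),\quad x_5(t_5)=x_{1,6}(t_5),\quad x_6(t_6)=x_{1,7}(t_6).
\end{align*}
Each residual factor on the right of a decomposition is then checked against Definition/Lemma~\ref{larger group G8-3D4} to lie in $G_8(q^3)$, so that Corollary~\ref{action A dot g-3D4} applies to it.

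The heart of the proof is the single claim that $A.x_{1,j}(t)=A$ for every $A\in V$, $t\in\mathbb{F}_{q^3}$, and $j\in\{2,\dots,7\}$. This is essentially a one-line matrix computation: $x_{1,j}(t)^{-\top}=I_8-t\,e_{j,1}+t\,e_{8,9-j}$, and right-multiplication of $A$ by this matrix alters only position $(1,1)$ (by $-t\,A_{1,j}$) and, when $j=3$, also position $(2,1)$ (by $-t\,A_{2,3}$), since by Notation/Lemma~\ref{V, 3D4} $A$ is supported only on rows $1$ and $2$ with nonzero entries lying in $J$. Neither $(1,1)$ nor $(2,1)$ belongs to $J$, so $\pi$ deletes both modifications and $A.x_{1,j}(t)=A$.

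Applying the group-action identity $A.(gh)=(A.g).h$ of Corollary~\ref{action A dot g-3D4} to the decompositions above and invoking the triviality just proved for each top-row factor gives $A.x_i(t_i)=A.z_i(t_i)$, where $z_i(t_i)$ is the residual factor, matching the asserted formula in each case. The accompanying figures record the column operations produced by right-multiplication by $z_i(t_i)^{-\top}$ on rows $1$ and $2$ of $A$; they follow from the same direct matrix computation. I expect the main obstacle to be bookkeeping: verifying that the cross-terms from products such as $x_{r_3}(t_1^q)x_{r_4}(t_1^{q^2})$ match those of $x_{3,4}(t_1^q)x_{3,5}(t_1^{q^2})$ as matrices, that the relevant commutators of root vectors really do vanish so the top-row factors can be collected on one side, and that each residual factor satisfies the defining constraints of $G_8(q^3)$. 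All of these are routine calculations with the explicit Chevalley basis of Lemma~\ref{Chevalley, D4}.
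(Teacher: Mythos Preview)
Your proposal is correct and complete; the paper itself gives no proof for this lemma, treating it as a direct computation, and your argument---factoring each $x_i(t_i)$ via the Chevalley basis, verifying that the top-row factors $x_{1,j}(t)$ act trivially because $A\,x_{1,j}(t)^{-\top}$ differs from $A$ only at positions $(1,1)$ and possibly $(2,1)$ outside $J$, and checking that the residual factors lie in $G_8(q^3)$ so the group-action identity applies---is exactly the calculation one would do to justify it.
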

\begin{Lemma}\label{zeta_u}
 If $u\in \mathbb{F}_{q^3}^*$ and $p>2$,
 then the map
$\zeta_u \colon \mathbb{F}_{q^3} \to  \mathbb{F}_{q^3}: t\mapsto ut^{q^2}+u^qt^q$
is an $\mathbb{F}_{q}$-automorphism.
\end{Lemma}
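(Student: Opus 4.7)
The plan is to verify $\mathbb{F}_q$-linearity first, then use a dimension argument to reduce to injectivity, and finally derive a contradiction from the existence of a nonzero element in the kernel using the norm map $N\colon \mathbb{F}_{q^3}^*\to \mathbb{F}_q^*$.

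First I would observe that the Frobenius maps $t\mapsto t^q$ and $t\mapsto t^{q^2}$ are $\mathbb{F}_q$-linear endomorphisms of $\mathbb{F}_{q^3}$, so the $\mathbb{F}_{q^3}$-linear combination $\zeta_u(t)=ut^{q^2}+u^q t^q$ is $\mathbb{F}_q$-linear. Since $\mathbb{F}_{q^3}$ has $\mathbb{F}_q$-dimension $3$, proving $\zeta_u$ is an automorphism reduces to proving $\ker\zeta_u=\{0\}$.

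Next, suppose for contradiction that $t\in \mathbb{F}_{q^3}^*$ satisfies $ut^{q^2}+u^qt^q=0$. Since $u\neq 0$, dividing by $u^q t^q$ gives
\begin{equation*}
\left(\frac{t^q}{u}\right)^{q-1}=-1.
\end{equation*}
Setting $s:=t^q/u\in \mathbb{F}_{q^3}^*$, I get $s^{q-1}=-1$.

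The key step is then to apply the norm $N(x)=x^{1+q+q^2}$. On one side,
\begin{equation*}
N(s^{q-1})=s^{(q-1)(1+q+q^2)}=s^{q^3-1}=1,
\end{equation*}
while on the other side, since $p>2$ forces $q$ odd and hence $1+q+q^2$ odd,
\begin{equation*}
N(-1)=(-1)^{1+q+q^2}=-1.
\end{equation*}
Comparing the two gives $1=-1$ in $\mathbb{F}_q$, contradicting $p>2$. Hence $\ker\zeta_u=\{0\}$, and $\zeta_u$ is the desired $\mathbb{F}_q$-automorphism.

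I do not expect a serious obstacle: the only subtlety is the parity check that ensures $1+q+q^2$ is odd (which is exactly where the hypothesis $p>2$ is used), and the realisation that the norm map converts the equation $s^{q-1}=-1$ into a statement about $\mathbb{F}_q$ where the contradiction is immediate.
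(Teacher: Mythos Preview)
Your argument is correct and self-contained. The reduction to $s^{q-1}=-1$ via the substitution $s=t^q/u$ is clean, and the norm computation $s^{q^3-1}=1$ versus $(-1)^{1+q+q^2}=-1$ (using that $1+q+q^2$ is odd when $q$ is odd) is exactly the right way to exploit the hypothesis $p>2$.

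There is essentially nothing to compare against: the paper does not give a proof in place but simply refers the reader to Lemma~3.6 of \cite{sun2016arxiv}. Your write-up therefore supplies what the paper omits, and does so by a short elementary argument using only the multiplicative structure of $\mathbb{F}_{q^3}^*$ and the parity of $1+q+q^2$.
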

\begin{proof}
 c.f. Lemma 3.6 of \cite{sun2016arxiv}.
\end{proof}
\begin{Corollary}\label{zeta_u-b}
If $p>2$,
then
$
\mathbb{F}_{q^3} \to  \mathbb{F}_{q^3}: t\mapsto t+t^{q}
$
is an $\mathbb{F}_{q}$-automorphism.
\end{Corollary}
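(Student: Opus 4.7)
The plan is to derive the corollary directly from Lemma \ref{zeta_u} by making a suitable substitution, using that the Frobenius $F\colon \mathbb{F}_{q^3}\to\mathbb{F}_{q^3},\ t\mapsto t^q$, is an $\mathbb{F}_q$-automorphism (since it fixes $\mathbb{F}_q$ pointwise and is bijective on a finite field).

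First I would apply Lemma \ref{zeta_u} with the admissible choice $u=1\in \mathbb{F}_{q^3}^*$ to conclude that the map $\zeta_1\colon t\mapsto t^{q^2}+t^q$ is an $\mathbb{F}_q$-automorphism of $\mathbb{F}_{q^3}$. Next, writing $\phi\colon t\mapsto t+t^q$, I would observe the identity
\begin{equation*}
\phi(t) \;=\; t+t^q \;=\; t^{q^3}+t^{q^4} \;=\; \zeta_1\bigl(t^{q^2}\bigr) \;=\; (\zeta_1\circ F^2)(t),
\end{equation*}
which uses $t^{q^3}=t$ for all $t\in\mathbb{F}_{q^3}$. Since $F^2$ and $\zeta_1$ are both $\mathbb{F}_q$-automorphisms of $\mathbb{F}_{q^3}$, their composition $\phi$ is one as well.

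As a sanity check, I would also verify injectivity of $\phi$ directly, which independently confirms the corollary and uses the hypothesis $p>2$: if $t+t^q=0$, then $t^q=-t$, so $t=t^{q^3}=-t^{q^2}=t^q\cdot(-1)\cdot$ (iterating) $=-t$, forcing $2t=0$ and hence $t=0$ since $p>2$. The $\mathbb{F}_q$-linearity is automatic because $F$ is $\mathbb{F}_q$-linear, and then bijectivity follows from injectivity on a finite-dimensional space. There is essentially no obstacle here; the entire proof is a one-line reduction to Lemma \ref{zeta_u} once the correct substitution $u=1$ and post-composition with $F^2$ are identified.
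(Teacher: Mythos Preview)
Your proof is correct and matches the paper's intended approach: the paper states this result as a corollary of Lemma~\ref{zeta_u} with no explicit proof, so the natural derivation is precisely what you do---take $u=1$ in Lemma~\ref{zeta_u} to obtain that $\zeta_1\colon t\mapsto t^{q^2}+t^q$ is an $\mathbb{F}_q$-automorphism, and then compose with the Frobenius square to recover $t\mapsto t+t^q$. Your direct injectivity check is a helpful redundancy but not required.
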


\begin{Lemma}[$U$-orbit modules]\label{prop: 3D4-orbit}
Let $A=(A_{i,j}) \in V$.
Then the $U$-orbit module $\mathbb{C}\mathcal{O}_U([A])$ $(A\in V)$ is obtained as follows:
\begin{align*}
& \mathbb{C}\mathcal{O}_U([A])\\
=& \mathbb{C} \Big\{
{\left[%
\newcommand{\mc}[3]{\multicolumn{#1}{#2}{#3}}
\begin{array}{cccccccc}
\cline{2-7}
\mc{1}{c|}{×}
& \mc{1}{c|}{
\begin{array}{l}
 A_{12}\\
-A_{13}t_2\\
-A_{15}^{q}t_3^q\\
-A_{16}t_1^{q^2}t_3^q\\
-A_{17}t_2t_1^{q^2}t_3^q\\
-A_{15}t_3^{q^2}\\
-A_{16}t_1^{q}t_3^{q^2}\\
-A_{17}t_2t_1^{q}t_3^{q^2}\\
-A_{17}t_3^{q^2+q}\\
-A_{16}t_4^q\\
-A_{17}t_2t_4^q\\
\end{array}
}
& \mc{1}{c|}{
\begin{array}{l}
A_{13}\\
-A_{15}^qt_1^q\\
-A_{15}t_1^{q^2}\\
-A_{16}t_1^{q^2+q}\\
-A_{17}t_2t_1^{q^2+q}\\
+A_{17}t_4^q\\
\end{array}
}
& \mc{1}{c|}{
\begin{array}{l}
A_{15}^{q}\\
+A_{16}t_1^{q^2}\\
+A_{17}t_2t_1^{q^2}\\
+A_{17}t_3^{q^2}\\
\end{array}
}
& \mc{1}{c|}{
\begin{array}{l}
A_{15}\\
+A_{16}t_1^{q}\\
+A_{17}t_2t_1^{q}\\
+A_{17}t_3^q\\
\end{array}}
& \mc{1}{c|}{
\begin{array}{l}
A_{16}\\
+A_{17}t_2\\
\end{array}
}
& \mc{1}{c|}{A_{17}}
& × \\\cline{2-7}
× & \mc{1}{c|}{×} & \mc{1}{c|}{A_{23}} & × & × & × & × & ×\\\cline{3-3}
  \end{array}
\right]} \\
& \phantom{\mathbb{C} \Big\{}
\Big|{\ }
t_1, t_3, t_4\in \mathbb{F}_{q^3},{\ } t_2 \in \mathbb{F}_{q}\Big\}.
\end{align*}
\end{Lemma}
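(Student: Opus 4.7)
The plan is to compute $A.u$ for an arbitrary element $u = x_2(t_2) x_1(t_1) x_3(t_3) x_4(t_4) x_5(t_5) x_6(t_6) \in U$ (via Notation \ref{notation:x(t)-3D4}) by successive application of the group action $-.-$, using the fact that it is a group action by Proposition \ref{circ action-3D4}. This yields
\[
A.u = A.x_2(t_2).x_1(t_1).x_3(t_3).x_4(t_4).x_5(t_5).x_6(t_6).
\]
By Lemma \ref{3D4-A.xi, figures}, the last two factors act trivially, so only the first four steps need to be tracked.

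Each individual step can be carried out using Lemma \ref{3D4-A.xi, figures}, which expresses $B.x_i(t_i)$ as the action of a product of elementary transvections $\tilde{x}_{k\ell}(s) = I_8 + s e_{k\ell}$ sitting inside $A_8(q^3)$. For such a transvection, the action is $B.\tilde{x}_{k\ell}(s) = \pi(B\tilde{x}_{k\ell}(s)^{-\top}) = \pi(B - s B e_{\ell k})$, which corresponds to subtracting $s$ times column $k$ from column $\ell$ of $B$, followed by the projection $\pi$ of Notation/Lemma \ref{pi,3D4}. The projection kills entries outside $J$, applies $\pi_q$ at positions $(1,6), (1,7), (2,3)$ to enforce the $\mathbb{F}_q$-constraint, and symmetrises the pair of positions $(1,4), (1,5)$ via the coefficient involving $\eta$.

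The main obstacle is the bookkeeping of cross-terms: entries newly produced by one column operation feed into subsequent operations, yielding polynomial expressions of increasing complexity in the parameters $t_i$ and the $A_{k\ell}$'s. Particular care is required at positions $(1,4)$ and $(1,5)$, since the constraint $A_{14} = A_{15}^q$ must be maintained at each stage; this is guaranteed by Lemma \ref{pi_J,AgT,3D4}, so the $\pi$-projection reduces to $\pi_J$ on these positions, and no division by $1 + \eta^{1-q^2}$ occurs at any intermediate step. Similarly, the cross-terms at $(1,6)$ and $(1,7)$ produced by the column operations all turn out to lie in $\mathbb{F}_q$ (for instance, because $\pi_q$ fixes $\mathbb{F}_q$ and the contributions involve Galois-symmetric sums of the $t_i$'s), so $\pi_q$ acts as the identity on them; this is an application of Corollary \ref{zeta_u-b} and Lemma \ref{zeta_u}.

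Once the four successive column operations are carried out explicitly and the resulting matrix is read off entry-by-entry, comparing with the stated formula verifies the description of $\mathcal{O}_U(A)$. Since $t_1, t_3, t_4$ independently range over $\mathbb{F}_{q^3}$ and $t_2$ over $\mathbb{F}_q$, the orbit $\mathcal{O}_U(A)$ is precisely the set of matrices described, and $\mathbb{C}\mathcal{O}_U([A])$ is its $\mathbb{C}$-span.
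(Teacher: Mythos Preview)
Your proposal is correct and follows exactly the paper's approach: the paper's proof is the one-liner ``By \ref{3D4-A.xi, figures}, we calculate the orbit modules directly,'' and you have simply spelled out what that direct calculation entails. One small remark: your appeal to Lemma~\ref{zeta_u} and Corollary~\ref{zeta_u-b} to justify that the entries at positions $(1,6)$ and $(1,7)$ lie in $\mathbb{F}_q$ is unnecessary, since Lemma~\ref{pi_J,AgT,3D4} (applied with $g = u^{-1} \in G$) already guarantees $\pi_J(Au^{-\top}) \in V$, whence those entries automatically satisfy the required constraints and $\pi$ coincides with $\pi_J$ throughout.
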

\begin{proof}
 By \ref{3D4-A.xi, figures},
 we calculate the orbit modules directly.
\end{proof}

\begin{Definition}
 The elements of $V$  are called \textbf{patterns}.
 The monomial action of $G$ on $\mathbb{C}U$: $([A],g)\mapsto [A]*g$
 (e.g. \ref{fund thm U-3D4})
 and also the corresponding permutation operation on $V$: $(A,g)\mapsto A.g$
 (e.g. \ref{action A dot g-3D4})
 are called
 \textbf{truncated column operation}.
 So the permutation operation $(A,g)\mapsto A.g {\ }(A\in V, g\in G)$
 may be derived by a sequence of truncated column operations
 (see \ref{3D4-A.xi, figures})
 by \ref{pairwise diff-G8q3}.
\end{Definition}
Suppose $A\in V$.
Then
 $(i,j)\in J$ is a \textbf{main condition} of $A$
if and only if $A_{ij}$ is the rightmost non-zero entry in the $i$-th row.
We set
$\mathrm{main}(A):=
 \{ (i,j)\in J  \mid  (i,j) \text{ is a main condition of }A\}$.
 The coordinate $(i,j)$ is called \textbf{the $i$-th main condition} of $A$,
if $(i,j)\in \mathrm{main}(A)$. Set
$\mathrm{main}_i(A):= \{ (i,j)\in J \mid (i,j) \text{ is the $i$-th main condition of }A\}$.
Note that there exists at most one $i$-th main condition of $A$.

\begin{Definition}[Staircase pattern]\label{def: Staircase pattern-3D4}
Let $A\in V$ be a pattern.
We call $A$ a \textbf{staircase pattern},
if the elements in $\mathrm{main}(A)$ lie in different columns.
Analogously, a $U$-orbit
module $\mathbb{C}\mathcal{O}_U([A])$ is called a \textbf{staircase $U$-module},
if the elements in $\mathrm{main}(A)$ lie in different columns.
\end{Definition}

The \textbf{verge} of $A\in V$ is
$\mathrm{verge}(A):=
  \sum_{(i,j)\in \mathrm{main}(A)}{A_{i,j}e_{i,j}}$.
The \textbf{$i$-th verge} of $A$ is
$\mathrm{verge}_i(A):=
  \sum_{(i,k)\in \mathrm{main}_i (A)}{A_{i,k}e_{i,k}}$.
The (staircase) pattern $A\in V$ is called the \textbf{(staircase) verge pattern},
if $A=\mathrm{verge}(A)$.
A \textbf{minor condition}
 of $A\in V$ is $(i,j)\in J$ $(j\leq 4)$,
if $(i,9-j)$ is a main condition.
Set
$\mathrm{minor}(A):=
 \{(i,j)\in J  \mid (i,j) \text{ is a minor condition of }A\}\subseteq J$.
The \textbf{core}
of $A\in V$ is denoted by
$\mathrm{core}(A):=\mathrm{main}(A) \dot{\cup} \mathrm{minor}(A)$.
A (staircase) pattern $A\in V$ is a \textbf{(staircase) core pattern}
if $\mathrm{supp}(A) \subseteq \mathrm{core}(A)$.
\begin{Notation}
Define the families of $U$-orbit modules as follows:
$\mathfrak{F}_6:= \{\mathbb{C}\mathcal{O}_U(A) \mid  A\in V,{\,}A_{17}\neq 0\}$,
$\mathfrak{F}_5:= \{\mathbb{C}\mathcal{O}_U(A) \mid A\in V,{\,} A_{16}\neq 0,{\,} A_{17}= 0\}$,
$ \mathfrak{F}_4:= \{\mathbb{C}\mathcal{O}_U(A) \mid A\in V,{\,} A_{15}\neq 0,{\,} A_{16}=A_{17}= 0\}$,
$ \mathfrak{F}_3:= \{\mathbb{C}\mathcal{O}_U(A) \mid A\in V,{\,} A_{13}\neq 0,{\,} A_{15}=A_{16}=A_{17}= 0\}$,
$ \mathfrak{F}_{1,2}:= \{\mathbb{C}\mathcal{O}_U(A) \mid A\in V,{\,} A_{13}= A_{15}=A_{16}=A_{17}= 0\}$.
Let $A\in V$, we also say $A\in \mathfrak{F}_i$,
if $\mathbb{C}\mathcal{O}_U([A])\in \mathfrak{F}_i$.
\end{Notation}
\begin{Notation}\label{notation:Ta}
Let $a^*\in \mathbb{F}_{q^3}^*=\mathbb{F}_{q^3}\backslash \{0\}$,
and denote by
 $T^{a^*}$ a complete set of
 coset representatives (i.e. a transversal)
 of  $(a^*\mathbb{F}_{q}^+)$ in $\mathbb{F}_{q^3}^+$.
Then $|T^{a^*}|=q^2$.
If $\bar{t}_0\in T^{a^*}$ and $\bar{t}_0\in a^*\mathbb{F}_{q}^+$,
then we set $\bar{t}_0=0$.
\end{Notation}

\begin{Proposition}[Classification of $U$-orbit modules]\label{prop:class orbit-3D4}
Every $U$-orbit module is contained in one of the families
$\{\mathfrak{F}_{1,2},\mathfrak{F}_{3},\mathfrak{F}_{4}, \mathfrak{F}_{5},\mathfrak{F}_{6}\}$,
and
\begin{align*}
\mathfrak{F}_6=& \{\mathbb{C}\mathcal{O}_U([A_{12}e_{12}+A_{23}e_{23}+A_{17}^*e_{17}])
         \mid A_{12}\in \mathbb{F}_{q^3},A_{23}\in \mathbb{F}_{q}, A_{17}^*\in \mathbb{F}_{q}^*\},\\
\mathfrak{F}_5=& \{\mathbb{C}\mathcal{O}_U([A_{13}e_{13}+A_{23}e_{23}+A_{16}^*e_{16}])
         \mid A_{13}\in \mathbb{F}_{q^3},A_{23}\in \mathbb{F}_{q}, A_{6}^*\in \mathbb{F}_{q}^*\},\\
\mathfrak{F}_4=& \{\mathbb{C}\mathcal{O}_U([A_{23}e_{23}+{A_{15}^*}^qe_{14}+A_{15}^*e_{15}])
         \mid A_{23}\in \mathbb{F}_{q}, A_{15}^*\in \mathbb{F}_{q^3}^*\},\\
\mathfrak{F}_3=& \{\mathbb{C}\mathcal{O}_U([A_{23}e_{23}+\bar{A}_{12}^{A_{13}^*}e_{12}+A_{13}^*e_{13}])
         \mid A_{23}\in \mathbb{F}_{q}, A_{13}^*\in \mathbb{F}_{q^3}^*, \bar{A}_{12}^{A_{13}^*}\in T^{A_{13}^*}\},\\
\mathfrak{F}_{1,2}=& \{\mathbb{C}\mathcal{O}_U([A_{12}e_{12}+A_{23}e_{23}])
         \mid A_{12}\in \mathbb{F}_{q^3},A_{23}\in \mathbb{F}_{q}\}.
\end{align*}
The dimensions of $U$-orbit modules are determined.
In particular, every $U$-orbit module of
families $\mathfrak{F}_{1,2}$, $\mathfrak{F}_{4}$, $\mathfrak{F}_{5}$ and $\mathfrak{F}_{6}$
(i.e. except the family $\mathfrak{F}_{3}$)
contains one and only one staircase core pattern.
\end{Proposition}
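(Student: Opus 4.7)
The plan is to use Lemma \ref{prop: 3D4-orbit}, which describes $\mathcal{O}_U(A)$ explicitly in terms of four parameters $t_1,t_3,t_4\in\mathbb{F}_{q^3}$ and $t_2\in\mathbb{F}_q$, and do a case analysis on which of $A_{17},A_{16},A_{15},A_{13}$ vanish. First I would observe from the bottom row of the orbit formula that $A_{23}$ is $U$-invariant, and from the top row that $A_{17}$ is $U$-invariant, while the vanishing of $A_{16}$ (assuming $A_{17}=0$) and of $A_{15}$ (assuming $A_{17}=A_{16}=0$) are also orbit-invariant. This shows at once that $V=\mathfrak{F}_{1,2}\dot{\cup}\mathfrak{F}_3\dot{\cup}\mathfrak{F}_4\dot{\cup}\mathfrak{F}_5\dot{\cup}\mathfrak{F}_6$ and that each $U$-orbit lies in exactly one family.

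Next I would exhibit a representative in each family by choosing the $t_i$ in a prescribed order to kill positions from right to left. In $\mathfrak{F}_6$, the coefficient of $t_2$ at $(1,6)$ is $A_{17}\in\mathbb{F}_q^*$, so $t_2$ annihilates $(1,6)$; then the Frobenius-twisted coefficients of $t_3$ at $(1,5)$ and of $t_4$ at $(1,3)$ are both $A_{17}$, and since $x\mapsto x^q$ is bijective on $\mathbb{F}_{q^3}$ we may solve for $t_3$ and $t_4$ to kill those entries, leaving support inside $\{(1,2),(1,7),(2,3)\}$. In $\mathfrak{F}_5$, the coefficient of $t_1^q$ at $(1,5)$ is $A_{16}\in\mathbb{F}_q^*$ and kills $(1,5)$, after which $t_4$ kills $(1,2)$ via its coefficient $-A_{16}$. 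In $\mathfrak{F}_4$, Lemma \ref{zeta_u} guarantees that $t\mapsto A_{15}t^{q^2}+A_{15}^qt^q$ is an $\mathbb{F}_q$-automorphism of $\mathbb{F}_{q^3}$, so $t_1$ kills $(1,3)$ and an analogous use of $t_3$ kills $(1,2)$. In $\mathfrak{F}_3$ only $t_2\in\mathbb{F}_q$ acts on $(1,2)$, producing the coset $A_{12}+A_{13}^*\mathbb{F}_q$ which the transversal $T^{A_{13}^*}$ parametrizes; in $\mathfrak{F}_{1,2}$ the orbit formula is trivial in all $t_i$, so $A$ is its own representative.

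For uniqueness of the staircase core pattern in families $\mathfrak{F}_{1,2},\mathfrak{F}_4,\mathfrak{F}_5,\mathfrak{F}_6$, I would start from a candidate representative $A_0$ and impose that $A_0.u$ is again a staircase core pattern. Substituting the family-appropriate vanishings $A_{13}=A_{15}=A_{16}=0$ into the orbit formula of Lemma \ref{prop: 3D4-orbit}, the vanishing of the transformed entries $(1,3),(1,5),(1,6)$ becomes a triangular system of equations in $t_1,t_2,t_3,t_4$ whose solution set leaves the $(1,2)$ entry equal to the $(1,2)$ entry of $A_0$; this yields both uniqueness and, by counting the size of the solution set, the order of $\mathrm{Stab}_U(A_0)$, hence $\dim\mathbb{C}\mathcal{O}_U([A_0])=|U|/|\mathrm{Stab}_U(A_0)|$.

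The main technical obstacle is bookkeeping: each parameter $t_i$ appears at several positions above the one it is meant to kill, so I must verify that the choices made to annihilate earlier entries are not disturbed by later ones, and that the surjectivity of the relevant Frobenius-twisted $\mathbb{F}_q$-linear maps, most critically $\zeta_{A_{15}}$ in $\mathfrak{F}_4$, provides enough freedom. Family $\mathfrak{F}_3$ needs separate care because the only nontrivial parameter at the relevant entry lies in $\mathbb{F}_q$ rather than $\mathbb{F}_{q^3}$, which is precisely what forces the appearance of transversals and the failure of staircase-core uniqueness in that one family.
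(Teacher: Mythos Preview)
Your proposal is correct and follows essentially the same approach as the paper. The paper works out only $\mathfrak{F}_6$ in detail: it displays the full orbit of a general $A$ with $A_{17}\neq 0$ explicitly (parametrized by the free entries $B_{13},B_{15}\in\mathbb{F}_{q^3}$, $B_{16}\in\mathbb{F}_q$), reads off the dimension $q^7$, then exhibits the specific element $u=x(t_1,-A_{16}/A_{17}^*,-A_{15}^{q^2}/A_{17}^*,\ldots)$ carrying $A$ to a staircase core pattern and observes that the resulting $(1,2)$-entry $A_{12}+\tfrac{A_{13}A_{16}+A_{15}^{q+1}}{A_{17}^*}$ is determined by $A$; uniqueness follows because setting $B_{13}=B_{15}=B_{16}=0$ in the orbit description pins down all entries. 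The remaining families are dismissed with ``similarly''. Your more structured presentation (invariants first, then right-to-left killing of entries via Lemma~\ref{zeta_u}, then uniqueness via a triangular system) is the same argument organized differently, and in fact supplies more of the case-by-case detail than the paper does.
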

\begin{proof}
Let $A=(A_{ij})\in V$ with $A_{17}=A_{17}^*\in \mathbb{F}_q^*$.
Then we have that
\begin{align*}
&\mathbb{C}\mathcal{O}_U([A])
=\mathbb{C} \left\{
{%
\left[
\newcommand{\mc}[3]{\multicolumn{#1}{#2}{#3}}
\begin{array}{cccccccc}
\cline{2-7}
\mc{1}{c|}{}
& \mc{1}{c|}{
\begin{array}{l}
 A_{12}\\
+\frac{A_{13}A_{16}+A_{15}^{q+1}}{A_{17}^*}\\
-\frac{B_{13}B_{16}+B_{15}^{q+1}}{A_{17}^*}
\end{array}
}
& \mc{1}{c|}{B_{13}}
& \mc{1}{c|}{B_{15}^{q^2}}
& \mc{1}{c|}{B_{15}}
& \mc{1}{c|}{B_{16}}
& \mc{1}{c|}{A^*_{17}}
& \\\cline{2-7}
× & \mc{1}{c|}{} & \mc{1}{c|}{A_{23}} & × & × & × &  & ×\\\cline{3-3}
  \end{array}
\right]}
{\, }\middle|{\,}
{\left\{\begin{array}{l}
     B_{13},B_{15}\in \mathbb{F}_{q^3}\\
     B_{16} \in \mathbb{F}_{q}
     \end{array}
     \right.}
\right\}
\end{align*}
Thus $\mathrm{dim}_{\mathbb{C}}\mathbb{C}\mathcal{O}_U([A])=q^7$.
Let $u:=x(t_1,
-\frac{A_{16}}{A_{17}^*},
-\frac{A_{15}^{q^2}}{A_{17}^*},
-\frac{A_{13}^{q^2}-(A_{15}t_1+A_{15}^{q^2}t_1^q)}{A_{17}^*},
t_5,t_6)$.
Then there is a staircase core pattern
\begin{align*}
C:=A.u=
 {\newcommand{\mc}[3]{\multicolumn{#1}{#2}{#3}}
\begin{array}{cccccccc}\cline{2-7}
\mc{1}{c|}{}
& \mc{1}{c|}{
\rule{0pt}{16pt}A_{12}+\frac{A_{13}A_{16}+A_{15}^{q+1}}{A_{17}^*}}
& \mc{1}{c|}{} & \mc{1}{c|}{}
                 & \mc{1}{c|}{} & \mc{1}{c|}{} & \mc{1}{c|}{A^*_{17}} & \\\cline{2-7}
× & \mc{1}{c|}{} & \mc{1}{c|}{A_{23}} & × & × & × &  & ×\\\cline{3-3}
  \end{array}}
  \in \mathcal{O}_U(A).
\end{align*}
Thus $\mathcal{O}_U(C)=\mathcal{O}_U(A)$
and $\mathbb{C}\mathcal{O}_U([A])=\mathbb{C}\mathcal{O}_U([C])$.
Since $C$ only depends on $A$,
the staircase core pattern is determined uniquely.
Thus
\begin{align*}
\mathfrak{F}_6=\{\mathbb{C}\mathcal{O}_U([D_{12}e_{12}+D_{23}e_{23}+D_{17}^*e_{17}])
         \mid D_{12}\in \mathbb{F}_{q^3},D_{23}\in \mathbb{F}_{q}, D_{17}^*\in \mathbb{F}_{q}^*\}.
\end{align*}
Similarly, all of the statements are proved.
\end{proof}

\begin{Remark}\label{3D4-staircase, F3}
Let $A\in V$.
In \ref{prop:class orbit-3D4},
if $\mathbb{C}\mathcal{O}_U([A]) \subseteq \mathfrak{F}_3$
and $A_{2,3}\neq 0$,
then $\mathbb{C}\mathcal{O}_U([A])$ is not a staircase orbit  module.
In all the other families, the orbit modules $\mathbb{C}\mathcal{O}_U([A])$
are staircase orbit modules.
\end{Remark}

\begin{Proposition}[$U$-stabilizer]\label{prop: 3D4-stab}
Let $A=(A_{ij})\in V$.
\begin{itemize}
\setlength\itemsep{0em}
 \item [(1)] If $A\in \mathfrak{F}_{1,2}$,
             then $\mathrm{Stab}_U(A)=U={{^3}D}^{syl}_4{(q^3)}$.
 \item [(2)] If  $A\in \mathfrak{F}_{3}$ and $A_{13}=A_{13}^*\in \mathbb{F}_{q^3}^*$,
             then $\mathrm{Stab}_U(A)=X_1X_3X_4X_5X_6$.
 \item [(3)] If  $A\in \mathfrak{F}_{4}$ and $A_{15}=A_{15}^*\in \mathbb{F}_{q^3}^*$, then
             \begin{align*}
 \mathrm{Stab}_U(A)
=&\left\{x(0,t_2,t_3,t_4,t_5,t_6){\, }
\middle| {\, }
            t_3, t_4\in \mathbb{F}_{q^3},
            t_2,  t_5, t_6 \in \mathbb{F}_q,
            {A^*_{15}}^qt_3^q+A^*_{15}t_3^{q^2}+A_{13}t_2=0
     \right\}\\
=&\bigg\{x(0,t_2,
-\frac{A_{13}^{q^2}{A_{15}^*}^{q}+A_{13}^q{A_{15}^*}-A_{13}{A_{15}^*}^{q^2}}{2{A_{15}^*}^{q+1}}t_2,
        t_4,t_5,t_6)
{\,}\bigg|{\,}
     t_4\in \mathbb{F}_{q^3},{\,}
     t_2,t_5, t_6 \in \mathbb{F}_q
     \bigg\}.
\end{align*}
 \item [(4)] If  $A\in \mathfrak{F}_{5}$ and $A_{16}=A_{16}^*\in \mathbb{F}_{q}^*$, then
 \begin{align*}
  \mathrm{Stab}_U(A)=\bigg\{
  x(0,t_2,t_3,\frac{-A_{13}^{q^2}t_2-A_{15}t_3-A_{15}^{q^2}t_3^q}{A^*_{16}},t_5,t_6)
 {\,}\bigg|{\,}
  t_3\in \mathbb{F}_{q^3},{\ }t_2,t_5,t_6\in \mathbb{F}_{q} \bigg\}.
 \end{align*}
 \item [(5)] If  $A\in \mathfrak{F}_{6}$ and $A_{17}=A_{17}^*\in \mathbb{F}_{q}^*$, then
 \begin{align*}
  \mathrm{Stab}_U(A)=\bigg\{
x(t_1,0,-\frac{A_{16}t_1}{A^*_{17}},\frac{A_{15}t_1+A_{15}^{q^2}t_1^q+A_{16}t_1^{q+1}}{A^*_{17}},t_5,t_6)
 {\,}\bigg|{\,}
t_1\in \mathbb{F}_{q^3},{\ }t_5,t_6\in \mathbb{F}_{q}\bigg\}.
 \end{align*}
\end{itemize}
\end{Proposition}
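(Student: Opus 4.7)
The plan is to solve $A.u = A$ directly for $u = x(t_1,\dots,t_6)\in U$ by comparing the entries of $A.u$, as listed in Lemma \ref{prop: 3D4-orbit}, with those of $A$. The entries $(1,7)$ and $(2,3)$ always agree and $(1,4)$ is automatic once $(1,5)$ matches (it is the $q$-th power of $(1,5)$), so only the positions $(1,6), (1,5), (1,3), (1,2)$ impose genuine constraints; I process them in that order, since each depends on parameters already fixed in earlier steps.

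For case (1), every relevant entry formula collapses trivially, giving $\mathrm{Stab}_U(A)=U$. For case (2), only $(1,2)$ is non-trivial, reducing to $-A_{13}^*t_2 = 0$ and hence $t_2=0$. For case (3), $(1,6)$ and $(1,5)$ are automatic; $(1,3)$ becomes $\zeta_{A_{15}^*}(t_1) = {A_{15}^*}^q t_1^q + A_{15}^* t_1^{q^2} = 0$, and Lemma \ref{zeta_u} forces $t_1=0$; then $(1,2)$ becomes $\zeta_{A_{15}^*}(t_3) = -A_{13}t_2$, which must be inverted explicitly. To do the inversion I would solve the $\mathbb{F}_q$-linear system expressing $t$ as an $\mathbb{F}_{q^3}$-combination of $\zeta_a(t), \zeta_a(t)^q, \zeta_a(t)^{q^2}$, which (using $p>2$) produces $\zeta_a^{-1}(y) = \tfrac{1}{2a^{q+1}}\bigl(a^{q^2}y - a y^q - a^q y^{q^2}\bigr)$. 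Substituting $y = -A_{13}t_2$ and using $t_2\in\mathbb{F}_q$ yields the displayed formula for $t_3$.

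For case (4), $(1,5)$ forces $A_{16}^*t_1^q = 0$, hence $t_1 = 0$; with $t_1=0$ the $(1,3)$ entry is automatic, and $(1,2)$ yields $A_{16}^* t_4^q = -A_{13}t_2 - A_{15}^q t_3^q - A_{15}t_3^{q^2}$, from which the stated $t_4$ follows by taking the $q^2$-th power (so $t_4^{q^3}=t_4$) and using $A_{16}^*,t_2\in\mathbb{F}_q$. For case (5), $(1,6)$ forces $A_{17}^*t_2=0$, so $t_2=0$; then $(1,5)$ gives $A_{16}t_1^q + A_{17}^*t_3^q = 0$, i.e.\ $t_3 = -A_{16}t_1/A_{17}^*$ after $q^2$-Frobenius (using $A_{16},A_{17}^*\in\mathbb{F}_q$); the $(1,3)$ entry similarly determines $t_4$ via $A_{17}^* t_4^q = A_{15}^q t_1^q + A_{15} t_1^{q^2} + A_{16} t_1^{q^2+q}$.

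The main obstacle lies in case (5): one must still verify that the $(1,2)$ entry, a sum of roughly a dozen terms involving $t_1, t_3, t_4$, vanishes automatically after substituting the above expressions for $t_3$ and $t_4$. This reduces to a tedious but elementary cancellation of the $t_1^q$, $t_1^{q^2}$ and $t_1^{q^2+q}$ coefficients, using $A_{16}, A_{17}^* \in \mathbb{F}_q$ and the $\mathbb{F}_{q^3}$-Frobenius relations. As a sanity check I would verify in each case that $|\mathrm{Stab}_U(A)|\cdot|\mathcal{O}_U(A)| = q^{12} = |U|$, which matches the orbit sizes implicit in Proposition \ref{prop:class orbit-3D4}.
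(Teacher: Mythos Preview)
Your approach is essentially identical to the paper's: both solve $A.x=A$ by reading off the entries from Lemma~\ref{prop: 3D4-orbit}, use Lemma~\ref{zeta_u} to force $t_1=0$ in the $\mathfrak{F}_4$ case, and invert the resulting $\mathbb{F}_q$-linear Frobenius system (the paper does this by multiplying the equation and its Frobenius conjugates by suitable powers of $A_{15}^*$ and combining, which is exactly your explicit $\zeta_a^{-1}$ formula). If anything you are more careful than the paper, which works out only case~(3) in detail and dismisses the rest with ``similarly''; your remark that the $(1,2)$ entry in case~(5) must be checked to vanish automatically, and the orbit--stabilizer sanity check, are genuine points the paper leaves implicit.
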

\begin{proof}
Let $A\in \mathfrak{F}_{4}$ (i.e. $A_{17}=A_{16}=0$, $A_{15}\neq 0$)
and $x:=x(t_1,t_2,t_3,t_4,t_5,t_6)\in U$.
Then we have that
\begin{align*}
& A.x=A
\stackrel{\ref{prop: 3D4-orbit}}{\iff}
\left\{
\begin{array}{l}
-{A^*_{15}}^qt_1^q-A^*_{15}t_1^{q^2}=0\\
-A_{13}t_2-{A^*_{15}}^{q}t_3^q-A^*_{15}t_3^{q^2}=0\\
\end{array}
\right.
\stackrel{\ref{zeta_u}}{\iff}
\left\{
\begin{array}{l}
t_1=0\\
A_{13}t_2+{A^*_{15}}^{q}t_3^q+A^*_{15}t_3^{q^2}=0\\
\end{array}
\right.
\\
{\iff}&
\left\{
\begin{array}{l}
t_1=0\\
A_{13}{A_{15}^*}^{q^2}t_2+{A_{15}^*}^{q^2+q}t_3^q+{A_{15}^*}^{q^2+1}t_3^{q^2}=0\\
A_{13}^q{A_{15}^*}t_2+{A_{15}^*}^{q^2+1}t_3^{q^2}+{A_{15}^*}^{q+1}t_3=0\\
A_{13}^{q^2}{A_{15}^*}^qt_2+{A_{15}^*}^{q+1}t_3+{A_{15}^*}^{q^2+q}t_3^{q}=0\\
\end{array}
\right.
{\iff}
\left\{
\begin{array}{l}
t_1=0\\
t_3=-\frac{A_{13}^{q^2}{A_{15}^*}^{q}+A_{13}^q{A_{15}^*}-A_{13}{A_{15}^*}^{q^2}}{2{A_{15}^*}^{q+1}}t_2
\end{array}
\right..
\end{align*}
Thus (4) is proved.
Similarly, all the other stabilizers are obtained.
\end{proof}

\begin{Comparison}
\label{com:classification and stabilizer-3D4}
\begin{itemize}
\setlength\itemsep{0em}
\item [(1)] (Classification of orbit modules).
Every (staircase) $A_n(q)$-orbit module
has precisely one (staircase) verge pattern (see \cite[Theorem 3.2]{yan2}).
Every (staircase) $D_n^{syl}(q)$-orbit module
has one and only one (staircase) core pattern (see \cite[3.2.29]{Markus1}).
Neither (staircase) verge patterns nor core patterns can do the classification
for (staircase) ${^3}D_4^{syl}(q^3)$-orbit modules
(e.g. the family $\mathfrak{F}_3$ of \ref{prop:class orbit-3D4}).
 \item [(2)] (Stabilizer).
Every (staircase) $A_n(q)$-orbit module has a basis element whose stabilizer is a pattern subgroup
(see \cite[\S 3.3]{yan2}).
This does not hold for
$D_n^{syl}(q)$-orbit modules (see \cite[3.2.25]{Markus1}),
or
for ${^3}D_4^{syl}(q^3)$-orbit modules
(e.g. the family $\mathfrak{F}_5$ of \ref{prop: 3D4-stab}).
\end{itemize}
\end{Comparison}


\section{Homomorphisms between orbit modules}
\label{sec:homom. between orbit modules-3D4}
Let $U:={^3}D_4^{syl}(q^3)$.
In this section,
we define a truncated row operation of $U$ on $V$ (\ref{def: row operation}).
Then we show that
every $U$-orbit module is isomorphic to a staircase orbit module (\ref{3D4-orbit to staircase}).
After that, some irreducible modules are determined,
and any two orbit modules are shown to be orthogonal
when the $1$st verges are different (\ref{3D4-orth.}).

The following property is well known:
every $\varphi \in {\mathrm{End}_{\mathbb{C}U}(\mathbb{C}U)}$ is of the form
$
 \lambda_a \colon  {\mathbb{C}U} \to  {\mathbb{C}U}: x\mapsto ax$
 for a unique $a\in {\mathbb{C}U}$.
 Let $g\in U$ and $A\in V$.
 Then
$
 \lambda_g|_{\mathbb{C}\mathcal{O}_U([A])} \colon
 {\mathbb{C}\mathcal{O}_U([A])} \to  \mathrm{Im ( \lambda_g|_{\mathbb{C}\mathcal{O}_U([A])}) }
 = g{\mathbb{C}\mathcal{O}_U([A])}
$
is a ${\mathbb{C}U}$-isomorphism.

\begin{Lemma}\label{3D4-g[A],lemma}
 Let $A\in V$ and $g\in U$.
 Then
$
 \lambda_g([ A ])
 {=}g{[ A ]}
={\frac{1}{|U|}} \sum_{y\in U}{\overline{\vartheta\kappa_q(g^{-\top}A, y)}}y$.
\end{Lemma}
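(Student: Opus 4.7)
The plan is to unfold the definition of $[A]$, perform a change of summation variable, and then identify the resulting coefficient using the basic properties of $f$ and $\kappa_q$ already established in the section. First, by \ref{fund thm U-3D4} we have
\[
g\,[A] \;=\; \frac{1}{|U|}\sum_{u\in U}\overline{\chi_A(u)}\,gu.
\]
Substituting $y := gu$ (so that $u = g^{-1}y$ runs over $U$ as $y$ does) rewrites the right-hand side as $\frac{1}{|U|}\sum_{y\in U}\overline{\chi_A(g^{-1}y)}\,y$. It therefore suffices to establish the identity
\[
\chi_A(g^{-1}y) \;=\; \vartheta\,\kappa_q(g^{-\top}A,\,y)\qquad (y\in U),
\]
which amounts to showing $\kappa_q(A, f(g^{-1}y)) = \kappa_q(g^{-\top}A, y)$.

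The left-hand side is simplified using \ref{f(x)g,3D4}, which tells us $f(x) \equiv x-1 \pmod{V^\perp}$ for every $x\in G$. Applying this to $x = g^{-1}y \in U \leqslant G$ and using that $A\in V$ is orthogonal to $V^\perp$ with respect to $\kappa_q$, we obtain $\kappa_q(A,f(g^{-1}y)) = \kappa_q(A,g^{-1}y-1) = \kappa_q(A,g^{-1}y) - \kappa_q(A,I_8)$. Here $\kappa_q(A,I_8) = \pi_q(\mathrm{tr}(A^\top)) = \pi_q(\mathrm{tr}(A)) = 0$, since every $A\in V$ has support in $J$ and hence vanishes on the diagonal.

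What remains is the purely formal identity $\kappa(A, g^{-1}y) = \kappa(g^{-\top}A, y)$, which follows directly from the definition of $\kappa$ via the trace form together with the cyclic and transposition properties of the trace:
\[
\kappa(g^{-\top}A,\,y) \;=\; \mathrm{tr}\bigl((g^{-\top}A)^\top y\bigr) \;=\; \mathrm{tr}(A^\top g^{-1}y) \;=\; \kappa(A,\,g^{-1}y).
\]
Applying $\pi_q$ to both sides gives $\kappa_q(A,g^{-1}y) = \kappa_q(g^{-\top}A,y)$, which combined with the previous step yields the desired identity and completes the proof.

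There is no serious obstacle here: the argument is essentially a bookkeeping exercise, and the only subtle point is recognising that we may replace $f(g^{-1}y)$ by $g^{-1}y-1$ under $\kappa_q(A,-)$ with no correction term, which in turn rests on the fact that $A$ has zero diagonal so that $\kappa_q(A, I_8)$ vanishes.
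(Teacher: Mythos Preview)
Your proof is correct and follows essentially the same route as the paper's: substitute $y=gu$, replace $f(g^{-1}y)$ by $g^{-1}y$ under $\kappa_q(A,-)$, and then move $g^{-1}$ across via $\kappa(A,g^{-1}y)=\kappa(g^{-\top}A,y)$. The only cosmetic difference is that the paper absorbs your separate observation $\kappa_q(A,I_8)=0$ into the single congruence $f(g^{-1}y)\equiv g^{-1}y\pmod{V^\bot}$, which holds because $I_8\in V^\bot$ (its support is diagonal, hence disjoint from $J$).
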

\begin{proof}
If $g\in U$ and $A\in V$, then
 $\lambda_g([ A ])
 =\frac{1}{|U|}\sum_{z\in U}{\overline{\chi_A(z)}}gz
\stackrel{y:=gz}{=} \frac{1}{|U|}\sum_{y\in U}{\overline{\chi_A(g^{-1}y)}}y
                 =\frac{1}{|U|}\sum_{y\in U}{\overline{\vartheta\kappa_q(A, f(g^{-1}y))}}y$.
Since $f(g^{-1}y)\equiv g^{-1}y \mod V^\bot$, we have
\begin{align*}
\lambda_g([ A ])=\frac{1}{|U|}\sum_{y\in U}{\overline{\vartheta\kappa_q(A, g^{-1}y)}}y
=\frac{1}{|U|}\sum_{y\in U}{\overline{\vartheta\kappa_q(g^{-\top}A, y)}}y.
\end{align*}
\end{proof}

\begin{Lemma}\label{lemma:3D4-prop x.A}
Let $A=(A_{ij})\in V$ and $x:=x(t_1,t_2,t_3,t_4,t_5,t_6)\in U$.
Then
$ \pi(x^{-\top}A))=A-\pi_q(t_1A_{13})e_{23}$.
\end{Lemma}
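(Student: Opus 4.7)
The plan is to exploit the very restricted support of elements of $V$: every $A\in V$ has nonzero entries only in row $1$ (at columns $2,3,4,5,6,7$) and in row $2$ (at column $3$). Since we only need $\pi(x^{-\top}A)$, and $\pi$ depends solely on the entries at positions in $J$, it suffices to compute $(x^{-\top}A)_{1j}$ for $j\in\{2,3,4,5,6,7\}$ and $(x^{-\top}A)_{23}$.

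First I would write $(x^{-\top}A)_{ij}=\sum_{k}(x^{-1})_{ki}A_{kj}$ and observe that the sum collapses to $k\in\{1,2\}$ because $A_{kj}=0$ otherwise. Since $x$ is upper unitriangular, so is $x^{-1}$, hence $(x^{-1})_{11}=(x^{-1})_{22}=1$, $(x^{-1})_{21}=0$, and one computes $(x^{-1})_{12}=-x_{12}=-t_1$ (the higher-order contributions $(N^2)_{12},(N^3)_{12},\dots$ in the Neumann expansion of $(I+N)^{-1}$ all vanish because $N_{22}=0$). Consequently
\begin{align*}
(x^{-\top}A)_{1j}&=A_{1j}\quad\text{for all }j,\\
(x^{-\top}A)_{2j}&=-t_1 A_{1j}+A_{2j},
\end{align*}
which at the position $(2,3)$ gives $(x^{-\top}A)_{23}=-t_1 A_{13}+A_{23}$.

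Next I would apply the definition of $\pi$ in \ref{pi,3D4} entry by entry. At positions $(1,2),(1,3)$ the map $\pi$ just reads off the entry, so these yield $A_{12},A_{13}$. At positions $(1,4),(1,5)$ the formula for $\pi$ combines $(x^{-\top}A)_{14}$ and $(x^{-\top}A)_{15}$ into $\frac{A_{14}^{q^2}+A_{15}\eta^{1-q^2}}{1+\eta^{1-q^2}}$ and its $q$-th power; since $A\in V$ we have $A_{14}=A_{15}^q$, and this fraction collapses to $A_{15}$, so these entries coincide with $A_{15}^q$ and $A_{15}$ respectively (i.e.\ $\pi|_V=\mathrm{id}_V$, \ref{pi,3D4}). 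At positions $(1,6),(1,7)$ we obtain $\pi_q(A_{16})=A_{16}$ and $\pi_q(A_{17})=A_{17}$ since $A_{16},A_{17}\in\mathbb F_q$ and $\pi_q|_{\mathbb F_q}=\mathrm{id}_{\mathbb F_q}$ by \ref{pi_q, 3D4}. The only nontrivial change occurs at position $(2,3)$, where
\[
\pi_q\bigl((x^{-\top}A)_{23}\bigr)=\pi_q(-t_1A_{13}+A_{23})=-\pi_q(t_1A_{13})+A_{23},
\]
again using $A_{23}\in\mathbb F_q$ and $\mathbb F_q$-linearity of $\pi_q$.

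Assembling the entries, $\pi(x^{-\top}A)$ agrees with $A$ at every coordinate of $J$ except at $(2,3)$, where it differs from $A$ by $-\pi_q(t_1A_{13})$. This yields the claimed formula $\pi(x^{-\top}A)=A-\pi_q(t_1A_{13})e_{23}$. There is no real obstacle here beyond careful bookkeeping; the only subtle point is verifying that $(x^{-1})_{12}=-t_1$ despite $x$ being a product of six root subgroup elements, which is resolved by the observation that second-order contributions to $(x^{-1})_{12}$ vanish.
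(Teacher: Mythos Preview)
Your argument is correct. The paper states this lemma without proof, so your direct computation is exactly the sort of verification one would supply; note also that $(x^{-1})_{12}=-t_1$ can be read off immediately from the explicit matrix in \ref{sylow p-subg, 3D4} together with the general fact that $(I+N)^{-1}_{12}=-N_{12}$ for any strictly upper triangular $N$.
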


\begin{Definition/Lemma}[${{^3}D_4^{syl}(q^3)}$-truncated row operation] \label{def: row operation}
The map
\begin{displaymath}
 U\times V  \to  V: (u,A)\mapsto u.A:=\pi(u^{-\top}A)
\end{displaymath}
defines a (left) group action, which is called the {\textbf{truncated row operation}}.
Note that the elements of $U$ act as $\mathbb{F}_q$-automorphisms on $V$.
\end{Definition/Lemma}
\begin{proof}
Since $\pi$ is an $\mathbb{F}_q$-linear map by \ref{pi,3D4},
it is enough to prove that $(gh).A=g.(h.A)$
for all $g:=x(t_1,t_2,t_3,t_4,t_5,t_6)\in U$,
$h:=x(s_1,s_2,s_3,s_4,s_5,s_6)\in U$ and $A=(A_{ij})\in V$.
Let $gh:=x(r_1,r_2,r_3,r_4,r_5,r_6)$.
Then $r_1=t_1+s_1$ by \ref{sylow p-subg, 3D4}.
By \ref{lemma:3D4-prop x.A},
we have
$g.(h.A)=g.(A-\pi_q(s_1A_{13})e_{23})
=A-(\pi_q(t_1A_{13})+\pi_q(s_1A_{13}))e_{23}
=A-\pi_q(r_1A_{13})e_{23}
=(gh).A$.
\end{proof}

\begin{Corollary}\label{3D4-prop x.A}
 Let $A\in V$ and $x:=x(t_1,t_2,t_3,t_4,t_5,t_6)\in U$.
 Then
 $ x.A=A-\pi_q(t_1A_{13})e_{23}$.
 In particular,
 $x_1(t_1).A=A-\pi_q(t_1A_{13})e_{23}$
 and $x_i(t_i).A=A$ for all $i\in \{2,3,4,5,6\}$.
\end{Corollary}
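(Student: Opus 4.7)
The plan is to derive Corollary \ref{3D4-prop x.A} directly from Lemma \ref{lemma:3D4-prop x.A} together with the definition $x.A := \pi(x^{-\top} A)$ given in Definition/Lemma \ref{def: row operation}; the special cases for $x_i(t_i)$ then fall out by inspecting parameters. So the substance of the work lies in proving Lemma \ref{lemma:3D4-prop x.A}, which I would do by a direct computation of the finitely many entries of $x^{-\top} A$ that survive the projection $\pi$.

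The key observation is that any pattern $A \in V$ has support contained in $\{(1,j) \mid 2 \leq j \leq 7\} \cup \{(2,3)\}$, so only the first two rows of $A$ are nonzero. Expanding $(x^{-\top} A)_{i,j} = \sum_k (x^{-1})_{k,i} A_{k,j}$, only the terms with $k \in \{1,2\}$ contribute. Since $x \in U$ is upper unitriangular, so is $x^{-1}$, which forces $(x^{-\top} A)_{1,j} = A_{1,j}$ for every $j$, and $(x^{-\top} A)_{2,j} = (x^{-1})_{1,2} \, A_{1,j} + A_{2,j}$. From the matrix form in Proposition \ref{sylow p-subg, 3D4} one reads off $x_{1,2} = t_1$, and for upper unitriangular matrices the $(1,2)$-entry of the inverse equals $-t_1$. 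Hence $(x^{-\top} A)_{2,3} = -t_1 A_{1,3} + A_{2,3}$, and every other coordinate of $x^{-\top} A$ that enters into $\pi$ already matches the corresponding entry of $A$.

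Applying $\pi$ according to its definition in Notation/Lemma \ref{pi,3D4}, the first-row entries of $\pi(x^{-\top} A)$ coincide with those of $\pi(A) = A$, while the $(2,3)$-entry becomes $\pi_q(-t_1 A_{1,3} + A_{2,3}) = -\pi_q(t_1 A_{1,3}) + A_{2,3}$ by the $\mathbb{F}_q$-linearity of $\pi_q$ together with $\pi_q|_{\mathbb{F}_q} = \mathrm{id}$ from Notation/Lemma \ref{pi_q, 3D4}. This yields $\pi(x^{-\top} A) = A - \pi_q(t_1 A_{13})\, e_{23}$, proving the lemma. The corollary is then immediate: the general formula follows by \ref{def: row operation}, and specialising to $x_1(t_1) = x(t_1,0,0,0,0,0)$ and to $x_i(t_i)$ for $i \in \{2,\ldots,6\}$ (where Notation \ref{notation:x(t)-3D4} forces the first parameter to vanish, so $\pi_q(0 \cdot A_{13}) = 0$) produces the two particular statements.

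There is essentially no serious obstacle beyond careful bookkeeping. The one point that requires a moment's care is confirming that the only off-diagonal entry of $x^{-1}$ which affects the relevant part of $x^{-\top} A$ is $(x^{-1})_{1,2}$: entries $(x^{-1})_{k,i}$ with $k > 2$ vanish by upper unitriangularity, and positions $(i,j)$ with $i > 2$ are killed when we subsequently apply $\pi$ since they lie outside $J = \mathrm{supp}(V)$.
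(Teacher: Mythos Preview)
Your proposal is correct and follows exactly the route the paper intends: the corollary is stated without proof because it is immediate from Lemma~\ref{lemma:3D4-prop x.A} together with the definition $u.A:=\pi(u^{-\top}A)$ in \ref{def: row operation}, and the special cases drop out by setting the irrelevant parameters to zero. You have additionally supplied the direct matrix computation for Lemma~\ref{lemma:3D4-prop x.A}, which the paper omits; your argument there (only rows $1$ and $2$ of $A$ are nonzero, only $(x^{-1})_{1,2}=-t_1$ matters, and $\pi|_V=\mathrm{id}_V$ handles the first row) is correct and is precisely the calculation the reader is expected to carry out.
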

\begin{Remark}
Let $A\in V$ and $g,u\in U$.
In general $g.(A.u)\neq (g.A).u$.
For example,
if $t_1,t_4\in \mathbb{F}_{q^3}^*$ and
$A=A_{17}^*e_{17}$ ($A_{17}^*\in \mathbb{F}_{q}^*$),
then
$\left(x_1(t_1).A\right).x_4(t_4)=A+t_4^qA_{17}^*e_{13}$
and
$x_1(t_1).\left(A.x_4(t_4)\right)=A+t_4^qA_{17}^*e_{13}-t_1t_4^qA_{17}^*e_{23}$,
so $\left(x_1(t_1).A\right).x_4(t_4)\neq x_1(t_1).\left(A.x_4(t_4)\right)$.
\end{Remark}

\begin{Lemma}\label{3D4-g[B] obit, Lemma}
Let $B:=B_{12}e_{12}+B_{13}e_{13}+B_{23}e_{23}\in V$,
$g:=x(t_1,t_2,t_3,t_4,t_5,t_6)\in U$
and $y\in U$.
Then
$\vartheta\kappa_q(g^{-\top}B, y-1)=\chi_{g.B}(y)$.
In particular,
$\vartheta\kappa_q(x_1(t_1)^{-\top}B, y-1)=\chi_{x_1(t_1).B}(y)$
for all $t_1\in \mathbb{F}_{q^3}$.
\end{Lemma}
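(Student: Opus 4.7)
The plan is to expand both sides and observe that they differ by a quantity that vanishes under $\pi_q$. By definition, $\chi_{g.B}(y)=\vartheta\kappa_q(g.B,f(y))$, and since $g.B\in V$ while $f(y)\equiv y-1\bmod V^\bot$ by \ref{f(x)g,3D4}, one may replace $f(y)$ by $y-1$ on the right. After dividing out $\vartheta$ the claim reduces to
\begin{align*}
\kappa_q(g^{-\top}B,\,y-1)\;=\;\kappa_q(g.B,\,y-1),
\qquad\text{i.e.}\qquad
\kappa_q\bigl(g^{-\top}B-g.B,\,y-1\bigr)=0.
\end{align*}

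Because $B$ is supported on $\{(1,2),(1,3),(2,3)\}$ and $g^{-\top}$ is lower unitriangular, the matrix $g^{-\top}B$ is supported in columns~$2$ and~$3$. Intersecting with $\URb$ (the support of $y-1$) leaves only the three positions $(1,2),(1,3),(2,3)$, where
\begin{align*}
(g^{-\top}B)_{12}=B_{12},\qquad (g^{-\top}B)_{13}=B_{13},\qquad (g^{-\top}B)_{23}=B_{23}-t_1B_{13},
\end{align*}
using $(g^{-1})_{12}=-t_1$ (the superdiagonal of the inverse of a unipotent upper-triangular matrix is the negative of the original). On the other hand \ref{3D4-prop x.A} gives $g.B=B-\pi_q(t_1B_{13})e_{23}$, so $g^{-\top}B-g.B$ vanishes at $(1,2)$ and $(1,3)$ and at $(2,3)$ takes the value $\pi_q(t_1B_{13})-t_1B_{13}$, which lies in $\ker\pi_q$ by idempotence of $\pi_q$ (\ref{pi_q, 3D4}).

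Consequently the inner product collapses to
\begin{align*}
\kappa_q\bigl(g^{-\top}B-g.B,\,y-1\bigr)\;=\;\pi_q\bigl((\pi_q(t_1B_{13})-t_1B_{13})\,y_{23}\bigr).
\end{align*}
Since $y\in U$ forces $y_{23}=t_2\in\mathbb{F}_q$ by \ref{sylow p-subg, 3D4}, and $\ker\pi_q$ is closed under multiplication by $\mathbb{F}_q$ (as $\pi_q$ is $\mathbb{F}_q$-linear), the argument of the outer $\pi_q$ lies in $\ker\pi_q$, giving $0$. This proves the general identity, and the ``in particular'' clause is the specialisation $t_2=t_3=t_4=t_5=t_6=0$. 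The only delicate point is isolating the single coordinate $(2,3)$ as the place where $g^{-\top}B$ and its projection $\pi(g^{-\top}B)=g.B$ disagree within $\URb$; once that is pinned down, the $\mathbb{F}_q$-structure of $y_{23}$ finishes the proof.
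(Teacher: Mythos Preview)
Your proof is correct and follows essentially the same route as the paper's own argument. The paper computes $g^{-\top}B$ on the relevant coordinates, isolates the $(2,3)$-entry $B_{23}-t_1B_{13}$, and uses $y_{23}\in\mathbb{F}_q$ together with the $\mathbb{F}_q$-linearity and idempotence of $\pi_q$ to replace $-t_1B_{13}$ by $-\pi_q(t_1B_{13})$, then invokes \ref{3D4-prop x.A}; you perform the identical computation, merely phrasing it as ``the difference $g^{-\top}B-g.B$ contributes only at $(2,3)$ and lands in $\ker\pi_q$ after multiplying by $y_{23}\in\mathbb{F}_q$''.
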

\begin{proof}
Let $B:=B_{12}e_{12}+B_{13}e_{13}+B_{23}e_{23}\in V$,
$g:=x(t_1,t_2,t_3,t_4,t_5,t_6)\in U$ and $y\in U$.
Then
\begin{align*}
& \vartheta\kappa_q(g^{-\top}B, y-1)
  {=}\vartheta \kappa_q(B-t_1B_{12}e_{22}-t_1B_{13}e_{23}, y-1)\\
{=}& \vartheta \big(\kappa_q(B, y-1)+\pi_q((-t_1B_{13})y_{23})\big)
{=} \vartheta \big(\kappa_q(B, y-1)+\pi_q(\pi_q(-t_1B_{13})y_{23})\big)\\
{=}& \vartheta\kappa_q(B-\pi_q(t_1B_{13})e_{23}, y-1)
\stackrel{\ref{3D4-prop x.A}}{=}
 \vartheta\kappa_q(g.B, f(y))
= \chi_{g.B}(y).
\end{align*}
\end{proof}

\begin{Proposition}\label{3D4-g[B] orbit,Prop}
Let $g\in U$ and $A:=A_{12}e_{12}+A_{13}e_{13}+A_{23}e_{23}\in V$.
Then
\begin{displaymath}
 \lambda_g([B])=\chi_{g.B}(g)[g.B] \quad \text{ for all } B\in \mathcal{O}_U(A).
\end{displaymath}
\end{Proposition}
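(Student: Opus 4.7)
The plan is to reduce the statement to Lemma \ref{3D4-g[B] obit, Lemma} applied to each $B \in \mathcal{O}_U(A)$, together with a bookkeeping step that converts $\kappa_q(g^{-\top}B, y)$ into $\kappa_q(g^{-\top}B, y-1)$ plus a correction term.

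First I would check that the hypothesis of Lemma \ref{3D4-g[B] obit, Lemma} is inherited along the orbit of $A$. Since $A_{15}=A_{16}=A_{17}=0$, the explicit description of $\mathbb{C}\mathcal{O}_U([A])$ in Lemma \ref{prop: 3D4-orbit} collapses: every entry in columns $4,5,6,7$ vanishes, and only the $(1,2)$, $(1,3)$, $(2,3)$ positions survive. Thus every $B\in\mathcal{O}_U(A)$ is again of the form $B_{12}e_{12}+B_{13}e_{13}+B_{23}e_{23}$, so Lemma \ref{3D4-g[B] obit, Lemma} applies to $B$.

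Next I would start from the formula in Lemma \ref{3D4-g[A],lemma},
\begin{equation*}
\lambda_g([B]) \;=\; \frac{1}{|U|}\sum_{y\in U}\overline{\vartheta\kappa_q(g^{-\top}B,\,y)}\;y,
\end{equation*}
and split $\kappa_q(g^{-\top}B, y) = \kappa_q(g^{-\top}B, y-1) + \kappa_q(g^{-\top}B, I_8)$. The first summand equals $\chi_{g.B}(y)$ by Lemma \ref{3D4-g[B] obit, Lemma}, so after factoring out the $y$-independent term I obtain
\begin{equation*}
\lambda_g([B]) \;=\; \overline{\vartheta\kappa_q(g^{-\top}B,\,I_8)} \cdot [g.B].
\end{equation*}

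The final step is to identify the scalar prefactor with $\chi_{g.B}(g)$. Taking $y=g$ in Lemma \ref{3D4-g[B] obit, Lemma} gives $\chi_{g.B}(g) = \vartheta\kappa_q(g^{-\top}B, g-1)$, so it suffices to show $\kappa_q(g^{-\top}B, g) = 0$. But
\begin{equation*}
\kappa_q(g^{-\top}B, g) \;=\; \pi_q\bigl(\mathrm{tr}(B^{\top}g^{-1}g)\bigr) \;=\; \pi_q\bigl(\mathrm{tr}(B^{\top})\bigr) \;=\; 0,
\end{equation*}
because $B$ is strictly upper triangular. This gives $\overline{\vartheta\kappa_q(g^{-\top}B, I_8)} = \vartheta\kappa_q(g^{-\top}B, g-1) = \chi_{g.B}(g)$, completing the proof. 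The only conceptual hurdle is recognizing the vanishing of $\kappa_q(g^{-\top}B, g)$; everything else is a direct assembly of Lemmas \ref{3D4-g[A],lemma} and \ref{3D4-g[B] obit, Lemma}.
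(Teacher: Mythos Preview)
Your proof is correct and follows essentially the same route as the paper: both start from Lemma~\ref{3D4-g[A],lemma}, split $y=(y-1)+I_8$, apply Lemma~\ref{3D4-g[B] obit, Lemma} to the $y-1$ part, and then identify the remaining scalar with $\chi_{g.B}(g)$. The only cosmetic difference is that the paper rewrites the scalar as $\overline{\chi_B(g^{-1})}$ before matching it to $\chi_{g.B}(g)$, whereas you go directly via the trace identity $\kappa_q(g^{-\top}B,g)=\pi_q(\mathrm{tr}(B^\top))=0$; both rely on $B$ being strictly upper triangular, and your explicit verification that every $B\in\mathcal{O}_U(A)$ retains the form $B_{12}e_{12}+B_{13}e_{13}+B_{23}e_{23}$ is exactly what the paper's terse ``$\mathrm{main}(B)=\mathrm{main}(A)$'' is gesturing at.
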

\begin{proof}
If $B\in \mathcal{O}_U(A)$,
then $\mathrm{main}(B)=\mathrm{main}(A)$.
Thus
\begin{align*}
g[B]
\stackrel{\ref{3D4-g[A],lemma}}{=}
& {\frac{1}{|U|}} \sum_{y\in U}{\overline{\vartheta\kappa_q(g^{-\top}B, y)}}y
= {\frac{1}{|U|}} \sum_{y\in U}
  {\overline{\vartheta\kappa_q(g^{-\top}B, y-1)\vartheta\kappa_q(g^{-\top}B, 1)}}y\\
=& {\frac{1}{|U|}} \sum_{y\in U}
  {\overline{\vartheta\kappa_q(g^{-\top}B, y-1)\vartheta\kappa_q(B, f(g^{-1}))}}y\\
\stackrel{\ref{3D4-g[B] obit, Lemma}}{=}&
  {\overline{\vartheta\kappa_q(B, f(g^{-1}))}}\cdot{\frac{1}{|U|}} \sum_{y\in U}
  {\overline{\chi_{g.B}(y)}}y
{=} \overline{\chi_B(g^{-1})}[g.B].
\end{align*}
We have
$\chi_{g.B}(g)
\stackrel{\ref{3D4-g[B] obit, Lemma}}{=} \vartheta\kappa_q(g^{-\top}B, g-1)
= \vartheta\kappa_q(B, 1-g^{-1})
=\overline{\vartheta\kappa_q(B, g^{-1}-1)}
=\overline{\vartheta\kappa_q(B, f(g^{-1}))}
= \overline{\chi_B(g^{-1})}$.
Hence $\lambda_g([B])=\chi_{g.B}(g)[g.B]$ for all $B\in \mathcal{O}_U(A)$.
\end{proof}

\begin{Corollary}\label{right action A23 image-2D4}
Let $g\in U$ and $A:=A_{12}e_{12}+A_{13}e_{13}+A_{23}e_{23}\in V$.
Then
$\mathrm{im}(\lambda_g|_{\mathbb{C}\mathcal{O}_{U}([A])})
=\mathbb{C}\mathcal{O}_{U}([g.A])$,
and
$g.(B.u)= (g.B).u$ for all  $B\in \mathcal{O}_U(A)$ and $u\in U$.
\end{Corollary}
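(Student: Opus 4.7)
The plan is to reduce the corollary to the commutation identity $g.(B.u)=(g.B).u$: once this is established, the image claim follows from Proposition \ref{3D4-g[B] orbit,Prop} essentially by inspection. Indeed, $\mathbb{C}\mathcal{O}_U([A])$ has basis $\{[B]\mid B\in \mathcal{O}_U(A)\}$, and Proposition \ref{3D4-g[B] orbit,Prop} gives $\lambda_g([B])=\chi_{g.B}(g)\,[g.B]$, where the scalar $\chi_{g.B}(g)=\vartheta\kappa_q(g^{-\top}B,g-1)$ is a root of unity and in particular nonzero. Hence $\operatorname{im}(\lambda_g|_{\mathbb{C}\mathcal{O}_U([A])})$ is the $\mathbb{C}$-span of $\{[g.B]\mid B\in\mathcal{O}_U(A)\}$, and by the commutation identity this set equals $\{[(g.A).u]\mid u\in U\}$, which spans $\mathbb{C}\mathcal{O}_U([g.A])$.

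For the commutation identity, the key observation is that for our particular $A=A_{12}e_{12}+A_{13}e_{13}+A_{23}e_{23}$ (so $A_{15}=A_{16}=A_{17}=0$), the right action of $U$ only modifies the entries in the first row and leaves the $(2,3)$-entry untouched; this is immediate from the explicit orbit formula in Lemma \ref{prop: 3D4-orbit}. Moreover, the same formula shows that the $(1,3)$-entry of $A.v$ is simply $A_{13}$ for every $v\in U$, since all the correction terms carry a factor of $A_{15}$, $A_{16}$, or $A_{17}$. Consequently, every $B\in\mathcal{O}_U(A)$ and every $B.u$ with $u\in U$ satisfies $B_{13}=(B.u)_{13}=A_{13}$ and $(B.u)_{23}=B_{23}$.

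Now write $g=x(t_1,t_2,\dots,t_6)$. By Corollary \ref{3D4-prop x.A}, for any $X\in V$ the truncated row action is $g.X=X-\pi_q(t_1 X_{13})e_{23}$, so the only effect of $g.\,\cdot\,$ is to shift the $(2,3)$-entry by $-\pi_q(t_1 X_{13})$. Applying this to $X=B.u$ and using $(B.u)_{13}=A_{13}$ yields $g.(B.u)=B.u-\pi_q(t_1 A_{13})e_{23}$. On the other side, $g.B=B-\pi_q(t_1 A_{13})e_{23}$ differs from $B$ only in the $(2,3)$-entry, and since the right action of $u$ on $V$ preserves the $(2,3)$-entry and the first-row entries of $g.B$ coincide with those of $B$, we obtain $(g.B).u=B.u-\pi_q(t_1 A_{13})e_{23}$ as well. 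Comparing the two expressions proves $g.(B.u)=(g.B).u$.

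The only delicate point is recognising that the specific shape of $A$ forces both the $(1,3)$-entry to be constant and the $(2,3)$-entry to be acted on trivially from the right throughout the entire orbit; the general Remark preceding this corollary already warned that the identity fails without such a restriction. Once this is in place, the proof is a short bookkeeping argument combining Lemmas \ref{prop: 3D4-orbit} and \ref{3D4-prop x.A} with Proposition \ref{3D4-g[B] orbit,Prop}.
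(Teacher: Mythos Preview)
Your proof is correct, but the paper's argument is shorter and more conceptual. You establish the commutation identity $g.(B.u)=(g.B).u$ first, by direct computation with the explicit orbit formula of Lemma~\ref{prop: 3D4-orbit} and the row-action formula of Corollary~\ref{3D4-prop x.A}, and then deduce the image statement. The paper instead does both parts purely by tracking $\mathbb{C}$-lines: since left multiplication by $g$ commutes with the right $U$-action on $\mathbb{C}U$, one has $\mathbb{C}\,g[A.x]=\mathbb{C}\,(g[A])x=\mathbb{C}\,[g.A]x$ for all $x\in U$ (using Proposition~\ref{3D4-g[B] orbit,Prop} once), giving the image claim immediately; and for the commutation identity it simply chains
\[
\mathbb{C}[(g.B).u]=\mathbb{C}[g.B]\,u=\mathbb{C}(g[B])\,u=\mathbb{C}\,g([B]u)=\mathbb{C}\,g[B.u]=\mathbb{C}[g.(B.u)],
\]
invoking Proposition~\ref{3D4-g[B] orbit,Prop} at both ends and the monomial module structure in between. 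Your approach has the virtue of making transparent exactly which feature of the orbit (namely that $A_{15}=A_{16}=A_{17}=0$ forces the $(1,3)$-entry to be constant along the orbit and the $(2,3)$-entry to be fixed by the right action) is responsible for the identity; the paper's approach hides this inside Proposition~\ref{3D4-g[B] orbit,Prop} but avoids any further coordinate computation.
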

\begin{proof}
If $x\in U$, then
$\mathbb{C}g[A.x]=\mathbb{C}g([A]x)
=\mathbb{C}(g[A])x
\stackrel{\ref{3D4-g[B] orbit,Prop}}{=}\mathbb{C}[g.A]x$.
So $\mathrm{im}(\lambda_g|_{\mathbb{C}\mathcal{O}_{U}([A])})=\mathbb{C}\mathcal{O}_{U}([g.A])$.
We have
$
\mathbb{C}[(g.B).u]
= \mathbb{C}[g.B] u
\stackrel{\ref{3D4-g[B] orbit,Prop}}{=} \mathbb{C}(g[B])u
{=} \mathbb{C}g([B]u)
{=} \mathbb{C}g[B.u]
{=} \mathbb{C}[g.(B.u)]$,
thus $ g.(B.u)=(g.B).u $.
\end{proof}

\begin{Corollary}\label{3D4-delete A23}
Let $A:=A_{12}e_{12}+A_{23}e_{23}+A_{13}^*e_{13}\in V$,
$A_{13}^*\in \mathbb{F}_{q^3}^*$, and $x_1(t_1)\in U$ such that $\pi_q(t_1A_{13}^*)=A_{23}$.
Then
$\mathbb{C}\mathcal{O}_{U}([A])   \cong  \mathbb{C}\mathcal{O}_{U}([x_1(t_1).A])
= \mathbb{C}\mathcal{O}_{U}([A-A_{23}e_{23}])$,
i.e.
\begin{align*}
\mathbb{C}\mathcal{O}_{U}(
{\left[%
\newcommand{\mc}[3]{\multicolumn{#1}{#2}{#3}}
\begin{array}{cccccccc}\cline{2-7}
\mc{1}{c|}{} & \mc{1}{c|}{A_{12}} & \mc{1}{c|}{A_{13}^*} & \mc{1}{c|}{×} & \mc{1}{c|}{×} & \mc{1}{c|}{×} & \mc{1}{c|}{×} & \\\cline{2-7}
× & \mc{1}{c|}{} & \mc{1}{c|}{A_{23}} & × & × & × &  & ×\\\cline{3-3}
     \end{array}
\right]}%
)
& \cong  \mathbb{C}\mathcal{O}_{U}(
{\left[%
\newcommand{\mc}[3]{\multicolumn{#1}{#2}{#3}}
\begin{array}{cccccccc}\cline{2-7}
\mc{1}{c|}{} & \mc{1}{c|}{A_{12}} & \mc{1}{c|}{A_{13}^*} & \mc{1}{c|}{×} & \mc{1}{c|}{×} & \mc{1}{c|}{×} & \mc{1}{c|}{×} & \\\cline{2-7}
× & \mc{1}{c|}{} & \mc{1}{c|}{0} & × & × & × &  & ×\\\cline{3-3}
     \end{array}
\right]}%
).
\end{align*}
\end{Corollary}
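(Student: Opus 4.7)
The plan is to derive this as a direct consequence of the two preceding corollaries, with the only real work being to verify that the required $t_1$ actually exists.

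First I would argue the existence of $t_1 \in \mathbb{F}_{q^3}$ with $\pi_q(t_1 A_{13}^*) = A_{23}$. By Notation/Lemma \ref{pi_q, 3D4}, $\pi_q \colon \mathbb{F}_{q^3} \to \mathbb{F}_q$ is an $\mathbb{F}_q$-epimorphism. Since $A_{13}^* \in \mathbb{F}_{q^3}^*$, multiplication by $A_{13}^*$ is an $\mathbb{F}_q$-linear bijection of $\mathbb{F}_{q^3}$, so the composite $\mathbb{F}_{q^3} \to \mathbb{F}_q$, $t_1 \mapsto \pi_q(t_1 A_{13}^*)$, is a surjective $\mathbb{F}_q$-linear map. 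In particular there exists $t_1$ with $\pi_q(t_1 A_{13}^*) = A_{23}$.

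Next I would compute $x_1(t_1).A$ using Corollary \ref{3D4-prop x.A}. Writing $A = A_{12}e_{12} + A_{13}^* e_{13} + A_{23}e_{23}$, we have $A_{13} = A_{13}^*$, so
\begin{equation*}
 x_1(t_1).A = A - \pi_q(t_1 A_{13}^*)e_{23} = A - A_{23}e_{23}
\end{equation*}
by the choice of $t_1$. This gives the equality $\mathbb{C}\mathcal{O}_U([x_1(t_1).A]) = \mathbb{C}\mathcal{O}_U([A-A_{23}e_{23}])$ in the statement.

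For the isomorphism, I would apply Corollary \ref{right action A23 image-2D4} (note that $A$ is precisely of the form $A_{12}e_{12}+A_{13}e_{13}+A_{23}e_{23}$ required in that corollary, so the hypothesis is satisfied). The left multiplication map $\lambda_{x_1(t_1)}$ is an element of $\mathrm{End}_{\mathbb{C}U}(\mathbb{C}U)$, and its restriction to $\mathbb{C}\mathcal{O}_U([A])$ is an injective $\mathbb{C}U$-homomorphism (injective because left multiplication by an invertible group element on $\mathbb{C}U$ is a bijection). By Corollary \ref{right action A23 image-2D4} the image is exactly $\mathbb{C}\mathcal{O}_U([x_1(t_1).A])$, so the restriction is a $\mathbb{C}U$-isomorphism $\mathbb{C}\mathcal{O}_U([A]) \cong \mathbb{C}\mathcal{O}_U([x_1(t_1).A])$, completing the proof.

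There is no substantive obstacle; the statement is a packaging of the two previous results. The only point requiring a brief argument is the surjectivity of $t_1 \mapsto \pi_q(t_1 A_{13}^*)$, which uses that $A_{13}^* \neq 0$ together with the already-established surjectivity of $\pi_q$.
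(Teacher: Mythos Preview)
Your proof is correct and follows exactly the approach implied by the paper, which states this as a corollary without proof: the equality comes from Corollary~\ref{3D4-prop x.A} and the isomorphism from Corollary~\ref{right action A23 image-2D4} together with the fact that $\lambda_{x_1(t_1)}$ restricts to a $\mathbb{C}U$-isomorphism onto its image. Your verification that such a $t_1$ exists is a useful addition, though strictly speaking the statement takes this as a hypothesis.
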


\begin{Corollary}\label{3D4-orbit to staircase}
Every $U$-orbit module is isomorphic to a (not necessarily unique) staircase module,
and the isomorphism is given by the left multiplication by a group element.
\end{Corollary}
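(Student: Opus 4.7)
The plan is to reduce to the classification in Proposition \ref{prop:class orbit-3D4} and apply Corollary \ref{3D4-delete A23} to the only troublesome family.

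First I would invoke Proposition \ref{prop:class orbit-3D4}, which tells us that every $U$-orbit module belongs to exactly one of the five families $\mathfrak{F}_{1,2},\mathfrak{F}_3,\mathfrak{F}_4,\mathfrak{F}_5,\mathfrak{F}_6$, and that for each family one can choose a canonical core representative $A\in V$. By Remark \ref{3D4-staircase, F3}, every representative in $\mathfrak{F}_{1,2},\mathfrak{F}_4,\mathfrak{F}_5,\mathfrak{F}_6$ is already a staircase pattern, so in these four cases $\mathbb{C}\mathcal{O}_U([A])$ is itself a staircase module and the isomorphism is left multiplication by $1\in U$.

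The remaining case is $\mathfrak{F}_3$, where a representative has the form $A=A_{12}e_{12}+A_{23}e_{23}+A_{13}^*e_{13}$ with $A_{13}^*\in\mathbb{F}_{q^3}^*$. If $A_{23}=0$ then $A$ is already a staircase core pattern and we are done. If $A_{23}\neq 0$, the module $\mathbb{C}\mathcal{O}_U([A])$ is not staircase, and this is where the work lies. I would produce $t_1\in\mathbb{F}_{q^3}$ with $\pi_q(t_1A_{13}^*)=A_{23}$: since $A_{13}^*\neq 0$, the corollary to \ref{pi_q, 3D4} guarantees that $t\mapsto \pi_q(tA_{13}^*)$ has image all of $\mathbb{F}_q$, so such a $t_1$ exists. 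Then Corollary \ref{3D4-delete A23} gives
\begin{equation*}
\lambda_{x_1(t_1)}\colon\mathbb{C}\mathcal{O}_U([A])\xrightarrow{\sim}\mathbb{C}\mathcal{O}_U([A-A_{23}e_{23}]),
\end{equation*}
and the target is now a staircase module of $\mathfrak{F}_3$ because the $(2,3)$-entry has been killed while $A_{13}^*$ is preserved (by \ref{3D4-prop x.A} the truncated row action of $x_1(t_1)$ only alters position $(2,3)$).

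I do not anticipate any serious obstacle: Proposition \ref{prop:class orbit-3D4} together with Remark \ref{3D4-staircase, F3} isolates $\mathfrak{F}_3$ as the sole problematic family, and Corollary \ref{3D4-delete A23} was already tailored precisely to provide the required isomorphism by left multiplication in $U$. The only point requiring care is the non-uniqueness remark in the statement, which simply reflects the fact that any $t_1$ in the coset $t_1+\ker\pi_q\cdot(A_{13}^*)^{-1}$ yields a different, equally valid staircase target; this non-uniqueness should be flagged but requires no further argument.
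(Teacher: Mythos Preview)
Your proposal is correct and follows essentially the same line as the paper's (very terse) proof: use Remark \ref{3D4-staircase, F3} to reduce to the case $A\in\mathfrak{F}_3$ with $A_{23}\neq 0$, then apply Corollary \ref{3D4-delete A23}. One small slip in your closing remark: distinct choices of $t_1$ satisfying $\pi_q(t_1A_{13}^*)=A_{23}$ all yield the \emph{same} target $x_1(t_1).A=A-A_{23}e_{23}$ (see \ref{3D4-prop x.A}), so the non-uniqueness clause in the statement does not arise from that choice.
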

\begin{proof}
Let $A\in V$ and $(1,3), (2,3)\in \mathrm{main}(A)$.
Then $(2,3)$ is deleted by \ref{3D4-delete A23}.
By \ref{3D4-staircase, F3}, the claim is proved.
\end{proof}

\begin{Lemma}\label{3D4-x5 A17}
Let $A\in V$ with $A_{17}=A_{17}^* \in \mathbb{F}_q^*$,
$x_5(s_5)\in U$ and $s_5\in \mathbb{F}_q$.
Then
\begin{align*}
\lambda_{x_5(s_5)}([A])=\vartheta(s_5A_{16})[A+s_5{A_{17}^*}e_{23}].
\end{align*}
\end{Lemma}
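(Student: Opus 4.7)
The plan is to apply Lemma \ref{3D4-g[A],lemma} with $g = x_5(s_5)$, compute $x_5(s_5)^{-\top}A$ explicitly, and then recognize the resulting sum as $\vartheta(s_5 A_{16})[A + s_5 A_{17}^* e_{23}]$. The key point is that $x_5(s_5).A = A$ by Corollary \ref{3D4-prop x.A}, because the projection $\pi$ kills the relevant entries of $x_5(s_5)^{-\top}A$; however the \emph{unprojected} bilinear pairing $\kappa_q(x_5(s_5)^{-\top}A, y)$ still picks up nontrivial $s_5$-dependent terms from the entries of $y$ outside the support of $V$, and these are exactly what produce the scalar $\vartheta(s_5 A_{16})$ and the translation $s_5 A_{17}^* e_{23}$.

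First, from the Chevalley basis in \ref{Chevalley, D4} we have $x_5(s_5) = I_8 + s_5 e_{r_{11}} = I_8 + s_5(e_{1,6} - e_{3,8})$, so $x_5(s_5)^{-\top} = I_8 - s_5(e_{6,1} - e_{8,3})$. Hence $x_5(s_5)^{-\top}A = A - s_5\, e_{6,1}A + s_5\, e_{8,3}A$. Since $A \in V$ has $\mathrm{supp}(A) \subseteq J$, row $3$ of $A$ is zero so $e_{8,3}A = 0$, while $e_{6,1}A$ is the matrix whose sixth row equals the first row of $A$ (and which is zero elsewhere).

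Next, for $y = x(t_1,t_2,t_3,t_4,t_5,t_6) \in U$, Proposition \ref{sylow p-subg, 3D4} gives the sixth row of $y$ as $(0,0,0,0,0,1,-t_2,t_1t_2+t_3)$. Using $A_{1,8} = 0$ together with $A_{16}, A_{17}^* \in \mathbb{F}_q$ and $\pi_q|_{\mathbb{F}_q} = \mathrm{id}$, I will compute
\begin{align*}
\kappa_q(e_{6,1}A, y)
= \pi_q\!\Big(\sum_{j} A_{1,j}\, y_{6,j}\Big)
= \pi_q\!\big(A_{16} - A_{17}^* t_2\big)
= A_{16} - A_{17}^* t_2,
\end{align*}
and therefore $\kappa_q(x_5(s_5)^{-\top}A, y) = \kappa_q(A, y) - s_5 A_{16} + s_5 A_{17}^* t_2$. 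Plugging this into Lemma \ref{3D4-g[A],lemma} pulls out the scalar $\vartheta(s_5 A_{16})$ and leaves a sum whose $y$-th coefficient is $\overline{\vartheta\kappa_q(A,y)\cdot \vartheta(s_5 A_{17}^* t_2)}/|U|$.

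Finally I will compare with $[A + s_5 A_{17}^* e_{23}]$. Since $\kappa_q(A,1)=0$ for $A\in V$, the passage from $f(y) \equiv y - 1 \bmod V^\bot$ to $y$ is harmless, and $\kappa_q(e_{23}, f(y)) = \pi_q(\pi_q(y_{2,3})) = \pi_q(t_2) = t_2$, giving
\begin{align*}
\chi_{A + s_5 A_{17}^* e_{23}}(y) = \vartheta\kappa_q(A,y)\cdot \vartheta(s_5 A_{17}^* t_2),
\end{align*}
which identifies the remaining sum with $[A + s_5 A_{17}^* e_{23}]$ and closes the proof. The one mildly delicate point, and the main thing to be careful about, is bookkeeping the rows and columns: specifically using $\mathrm{supp}(A) \subseteq J$ to kill $e_{8,3}A$ and the entry $A_{1,8}$, and extracting precisely the row-$6$ entries of $y$ from the matrix in \ref{sylow p-subg, 3D4}.
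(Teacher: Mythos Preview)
Your proof is correct and follows essentially the same route as the paper: apply Lemma~\ref{3D4-g[A],lemma}, compute $x_5(s_5)^{-\top}A$, use the explicit sixth row of $y$ from Proposition~\ref{sylow p-subg, 3D4} together with $A_{1,8}=0$ to reduce $\kappa_q(x_5(s_5)^{-\top}A,y)$ to $\kappa_q(A,y)-s_5A_{16}+s_5A_{17}^*t_2$, and then identify the remaining sum with $[A+s_5A_{17}^*e_{23}]$. The paper's version phrases the key step as $y_{67}=-y_{23}$ rather than reading off $y_{6,7}=-t_2$ and $y_{2,3}=t_2$ separately, but this is the same observation.
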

\begin{proof}
Let $A\in V$ with $A_{17}=A_{17}^* \in \mathbb{F}_q^*$
and $x_5(s_5)\in U$.
Then we have that
\begin{align*}
&\lambda_{x_5(s_5)}([A])
\stackrel{\ref{3D4-g[A],lemma}}{=}
{\frac{1}{|U|}} \sum_{y\in U}{\overline{\vartheta\kappa_q(x_5(s_5)^{-\top}A, y)}}y
{=}
{\frac{1}{|U|}} \sum_{y\in U}{\overline{\vartheta\kappa_q(A-s_5A_{16}e_{66}-s_5{A_{17}^*}e_{67}, y)}}y\\
{=}&
\vartheta(s_5A_{16})\cdot
{\frac{1}{|U|}} \sum_{y\in U}{\big(\overline{\vartheta\kappa_q(A, y)}
\cdot \overline{\vartheta\pi_q(-s_5{A_{17}^*}y_{67})}\big)}y\\
\stackrel{\ref{sylow p-subg, 3D4}}{=}&
\vartheta(s_5A_{16})\cdot
{\frac{1}{|U|}} \sum_{y\in U}{\big(\overline{\vartheta\kappa_q(A, y)}
\cdot \overline{\vartheta\pi_q(s_5{A_{17}^*}y_{23})}\big)}y
{=}
\vartheta(s_5A_{16})\cdot
{\frac{1}{|U|}} \sum_{y\in U}{\overline{\vartheta\kappa_q(A+s_5{A_{17}^*}e_{23}, y)}}y\\
{=}&
\vartheta(s_5A_{16})\cdot
{\frac{1}{|U|}} \sum_{y\in U}{\overline{\vartheta\kappa_q(A+s_5{A_{17}^*}e_{23}, f(y))}}y
{=}
\vartheta(s_5A_{16}) [A+s_5{A_{17}^*}e_{23}].
\end{align*}
\end{proof}

\begin{Proposition}\label{3D4-delete A23, A17}
Let $A,B\in V$, $A_{17}=A_{17}^* \in \mathbb{F}_q^*$, and
\begin{align*}
A:= {%
\newcommand{\mc}[3]{\multicolumn{#1}{#2}{#3}}
\begin{array}{cccccccc}\cline{2-7}
\mc{1}{c|}{} & \mc{1}{c|}{A_{12}} & \mc{1}{c|}{A_{13}} & \mc{1}{c|}{A_{15}^q}
& \mc{1}{c|}{A_{15}} & \mc{1}{c|}{A_{16}} & \mc{1}{c|}{A_{17}^*} & \\\cline{2-7}
× & \mc{1}{c|}{} & \mc{1}{c|}{A_{23}} & × & × & × &  & ×\\\cline{3-3}
     \end{array}
},
\quad
B:=
{%
\newcommand{\mc}[3]{\multicolumn{#1}{#2}{#3}}
\begin{array}{cccccccc}\cline{2-7}
\mc{1}{c|}{} & \mc{1}{c|}{A_{12}} & \mc{1}{c|}{A_{13}} & \mc{1}{c|}{A_{15}^q}
& \mc{1}{c|}{A_{15}} & \mc{1}{c|}{A_{16}} & \mc{1}{c|}{A_{17}^*} & \\\cline{2-7}
× & \mc{1}{c|}{} & \mc{1}{c|}{0} & × & × & × &  & ×\\\cline{3-3}
     \end{array}
}.
\end{align*}
Then
$
\mathbb{C}\mathcal{O}_{U}([A])
\cong  \mathbb{C}\mathcal{O}_{U}([B])
$.
\end{Proposition}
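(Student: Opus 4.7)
The plan is to imitate the proof of Corollary \ref{3D4-delete A23}, replacing the use of $x_1(t_1)$ (which governed the left action on the $(2,3)$-entry for patterns in $\mathfrak{F}_3$) by the element $x_5(s_5)$ whose left-action behaviour on patterns with $A_{17}\neq 0$ was computed in Lemma \ref{3D4-x5 A17}. The key observation is that Lemma \ref{3D4-x5 A17} gives
\begin{align*}
\lambda_{x_5(s_5)}([A]) \;=\; \vartheta(s_5 A_{16})\,[A + s_5 A_{17}^{*}\,e_{23}],
\end{align*}
so by choosing $s_5 := -A_{23}/A_{17}^{*} \in \mathbb{F}_q$ (which is well-defined since $A_{17}^{*}\in \mathbb{F}_q^{*}$ and $A_{23}\in \mathbb{F}_q$), the $(2,3)$-entry is zeroed out and the resulting pattern is exactly $B$, while all other entries remain untouched.

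First I would fix this $s_5$ and compute $\lambda_{x_5(s_5)}([A]) = \vartheta(s_5 A_{16})\,[B]$. Next I would invoke the general fact, recalled at the start of Section \ref{sec:homom. between orbit modules-3D4}, that for $g\in U$ the map $\lambda_g \colon \mathbb{C}U \to \mathbb{C}U$ is a $\mathbb{C}U$-isomorphism whose restriction to any orbit module $\mathbb{C}\mathcal{O}_U([A])$ is a $\mathbb{C}U$-isomorphism onto $g\,\mathbb{C}\mathcal{O}_U([A])$. In particular, $\lambda_{x_5(s_5)}$ restricted to $\mathbb{C}\mathcal{O}_U([A])$ is injective, and its image is spanned by
\begin{align*}
\{\lambda_{x_5(s_5)}([A]\,u)\mid u\in U\} \;=\; \{\vartheta(s_5 A_{16})\,[B]\,u \mid u\in U\} \;=\; \mathbb{C}\mathcal{O}_U([B]),
\end{align*}
using that $\lambda_{x_5(s_5)}$ commutes with right multiplication by $U$ and that $\vartheta(s_5 A_{16})\in \mathbb{C}^{*}$. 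This yields the desired $\mathbb{C}U$-isomorphism.

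There is essentially no obstacle here beyond correctly reading off $s_5$ from $A_{23}$ and $A_{17}^{*}$; the argument is routine once Lemma \ref{3D4-x5 A17} is in hand. The only thing worth double-checking is that the coefficient $\vartheta(s_5 A_{16})$ is nonzero (it is a root of unity) and that $s_5$ lies in $\mathbb{F}_q$ so that $x_5(s_5)\in U$ by \ref{root subgroups-3D4}; both are immediate.
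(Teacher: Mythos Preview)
Your proposal is correct and follows essentially the same approach as the paper: both choose $s_5 = -A_{23}/A_{17}^{*}$ and invoke Lemma \ref{3D4-x5 A17} to show that $\lambda_{x_5(s_5)}$ restricts to a $\mathbb{C}U$-isomorphism $\mathbb{C}\mathcal{O}_U([A]) \to \mathbb{C}\mathcal{O}_U([B])$. The only cosmetic difference is that the paper applies Lemma \ref{3D4-x5 A17} to every $C\in\mathcal{O}_U(A)$ (noting $C_{17}=A_{17}^{*}$ and $C_{23}=A_{23}$ throughout the orbit) and observes $C+s_5A_{17}^{*}e_{23}\in\mathcal{O}_U(B)$, whereas you apply it once to $[A]$ and then transport along right $U$-multiplication; these are the same argument phrased two ways.
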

\begin{proof}
Let $C\in \mathcal{O}_U(A)$ and $s_5:=-\frac{A_{23}}{A_{17}^*}\in \mathbb{F}_q$.
By \ref{3D4-x5 A17}, we get
$
\lambda_{x_5(s_5)}([C])=\vartheta(s_5C_{16})[C+s_5{A_{17}^*}e_{23}]
$
where $C+s_5{A_{17}^*}e_{23}\in \mathcal{O}_U(B)$ (c.f. the proof of \ref{right action A23 image-2D4}).
Thus $\mathbb{C}\mathcal{O}_{U}([A])
\cong  \mathbb{C}\mathcal{O}_{U}([B])$.
\end{proof}
For $1\leq i \leq 8$, the $i$\textbf{-th hook} of $J$ is
$H_i:=\{(a,b) \in J  \mid  b=i \text{ or } a=9-i\}$.
We have $H_7=\{(1,7), (2,3)\}$.
A pattern $A\in V$ is called
\textbf{hook-separated},
if on every hook $H_i$ of $J$ lies at most one main condition of $A$.
Note that hook-separated patterns are always staircase patterns.
If $A\in V$ is hook-separated,
then $\mathbb{C}\mathcal{O}_U([A])$ is called
a \textbf{hook-separated staircase module}.

\begin{Corollary}\label{3D4-classification, hook-sep.}
 Every $U$-orbit module
 is isomorphic to a certain hook-separated staircase module.
\end{Corollary}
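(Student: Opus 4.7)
The plan is to chain together \ref{3D4-orbit to staircase} with one further application of \ref{3D4-delete A23, A17}. By \ref{3D4-orbit to staircase} every $U$-orbit module is already isomorphic, via left multiplication by some group element, to a staircase orbit module, so it suffices to prove that every staircase module is isomorphic to a hook-separated one.

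First I would compute the hooks explicitly. From $J=\{(1,2),(1,3),(1,4),(1,5),(1,6),(1,7),(2,3)\}$ a direct inspection gives $H_i=\{(1,i)\}$ for $i\in\{2,4,5,6\}$, $H_3=\{(1,3),(2,3)\}$, $H_7=\{(1,7),(2,3)\}$, and $H_8=\{(1,2),\ldots,(1,7)\}$, while the remaining hooks miss $J$. Since each row of any pattern carries at most one main condition, hook-separation on $H_8$ is automatic; and the singleton hooks are obviously fine. So hook-separation can only fail on $H_3$ or on $H_7$.

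Next I would dispose of $H_3$ and $H_7$ separately. Inside $H_3$ the staircase condition already forbids $(1,3)$ and $(2,3)$ from both being main conditions, since they share column $3$. Hence for a staircase orbit module the sole remaining obstruction is that $(1,7)$ and $(2,3)$ are both main; this happens precisely when $A\in\mathfrak{F}_6$ and $A_{23}\neq 0$. Here I would invoke \ref{3D4-delete A23, A17}, which produces a $\mathbb{C}U$-isomorphism given by left multiplication by $x_5(-A_{23}/A_{17}^*)$ from $\mathbb{C}\mathcal{O}_U([A])$ onto $\mathbb{C}\mathcal{O}_U([A-A_{23}e_{23}])$. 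The new base pattern has vanishing $(2,3)$ entry, so $(2,3)$ ceases to be a main condition and hook-separation on $H_7$ is restored; no new main-condition collisions are created because only the $(2,3)$ entry of the base pattern is altered, leaving the row $1$ main condition at $(1,7)$ in place.

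Composing the two $\mathbb{C}U$-isomorphisms (first from \ref{3D4-orbit to staircase}, then from \ref{3D4-delete A23, A17} when the first output lands in $\mathfrak{F}_6$ with nonzero $(2,3)$ entry) yields the required isomorphism onto a hook-separated staircase module. I do not anticipate any genuinely hard step; the only conceptual care needed is verifying that the two reductions chain cleanly, which is immediate because each is left multiplication by a single element of $U$, and the explicit description of $\mathfrak{F}_6$ from \ref{prop:class orbit-3D4} tells us the hypothesis $A_{17}^*\in\mathbb{F}_q^*$ of \ref{3D4-delete A23, A17} is satisfied exactly when it is needed.
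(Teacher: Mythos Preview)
Your proof is correct and follows exactly the same approach as the paper's own proof, which simply cites \ref{3D4-orbit to staircase} and \ref{3D4-delete A23, A17} without further comment. You have usefully filled in the hook computation and the case analysis showing that the only obstruction to hook-separation for a staircase module lies on $H_7$ in family $\mathfrak{F}_6$, which is precisely what \ref{3D4-delete A23, A17} removes.
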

\begin{proof}
 By \ref{3D4-orbit to staircase}, every $U$-orbit module is isomorphic to a
 staircase module.
 By \ref{3D4-delete A23, A17}, we get the desired conclusion.
\end{proof}
Let $A, B \in V$,
$\mathrm{Stab}_U(A, B):=\mathrm{Stab}_U(A)\cap \mathrm{Stab}_U(B)$,
$\psi_A$ be the character of $\mathbb{C}\mathcal{O}_U([A])$
and $\psi_B$ denote the character of $\mathbb{C}\mathcal{O}_U([B])$.
Then
$\mathrm{Hom}_{\mathbb{C}U}(\mathbb{C}\mathcal{O}_U([A]),\mathbb{C}\mathcal{O}_U([B]))=\{0\}$
if and only if for all $C\in \mathcal{O}_U(A)$ and $D\in \mathcal{O}_U(B)$ holds
$\mathrm{Hom}_{\mathrm{Stab}_U(C, D)}(\mathbb{C}[C],\mathbb{C}[D])=\{0\}$.
In particular,
\begin{align*}
&\mathrm{dim}_{\mathbb{C}} \mathrm{Hom}_{\mathbb{C}U}(\mathbb{C}\mathcal{O}_U([A]),\mathbb{C}\mathcal{O}_U([B]))
    = \langle \psi_A, \psi_B\rangle_{U}\\
   = &\sum_{\substack{C\in \mathcal{O}_U(A)\\ D\in \mathcal{O}_U(B)}}
   \frac{|\mathrm{Stab}_U(C, D)|}{|U|} \Big(\mathrm{dim}_{\mathbb{C}}
                          \mathrm{Hom}_{\mathrm{Stab}_U(C, D)}(\mathbb{C}[C],\mathbb{C}[D])\Big).
\end{align*}
If $y\in U$, then
$
 \mathrm{Hom}_{\mathrm{Stab}_U(A, B)}(\mathbb{C}[ A ],\mathbb{C}[ B ])=
 \mathrm{Hom}_{\mathrm{Stab}_U(A.y, B.y)}(\mathbb{C}[ A.y ],\mathbb{C}[ B.y ])$
as $\mathbb{C}$-vector spaces,
and
$\mathrm{Hom}_{\mathbb{C}U}(\mathbb{C}\mathcal{O}_U([A]),\mathbb{C}\mathcal{O}_U([B]))=\{0\}$
if and only if
$\mathrm{Hom}_{\mathrm{Stab}_U(A, D)}(\mathbb{C}[ A ],\mathbb{C}[D])=\{0\}$
for all $D\in \mathcal{O}_U(B)$ (\cite[\S 3.3]{Markus1}).
\begin{Corollary}
Let $A, B\in V$.
Then
$\langle \psi_A, \psi_B\rangle_{U}
=\sum_{D\in \mathcal{O}_U(B)}
    \frac{|\mathrm{Stab}_U(A, D)|}
    {|\mathrm{Stab}_U(A)|}{\langle \chi_A, \chi_D\rangle_{\mathrm{Stab}_U(A, D)}}$,
where
$\chi_A$ and $\chi_D$ are the characters of the $\mathbb{C}\mathrm{Stab}_U(A, D)$-modules
$\mathbb{C}[A]$ and $\mathbb{C}[D]$ respectively.
\end{Corollary}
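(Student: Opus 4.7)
The plan is to reduce the double sum over $\mathcal{O}_U(A)\times\mathcal{O}_U(B)$ given immediately before the statement to a single sum over $\mathcal{O}_U(B)$ by parametrising $C\in\mathcal{O}_U(A)$ via group elements and exploiting the translation invariance of Hom spaces which was recalled just above the corollary.

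First I would start from the identity
\begin{align*}
\langle\psi_A,\psi_B\rangle_U
=\sum_{\substack{C\in\mathcal{O}_U(A)\\ D\in\mathcal{O}_U(B)}}
\frac{|\mathrm{Stab}_U(C,D)|}{|U|}\,
\dim_{\mathbb{C}}\mathrm{Hom}_{\mathrm{Stab}_U(C,D)}(\mathbb{C}[C],\mathbb{C}[D]),
\end{align*}
and then replace the sum over $C\in\mathcal{O}_U(A)$ by a sum over all $y\in U$ with $C=A.y$, compensating by the factor $1/|\mathrm{Stab}_U(A)|$ using the orbit--stabiliser relation $|\mathcal{O}_U(A)|\cdot|\mathrm{Stab}_U(A)|=|U|$. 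This gives
\begin{align*}
\langle\psi_A,\psi_B\rangle_U
=\frac{1}{|\mathrm{Stab}_U(A)|}\sum_{y\in U}\sum_{D\in\mathcal{O}_U(B)}
\frac{|\mathrm{Stab}_U(A.y,D)|}{|U|}\,
\dim_{\mathbb{C}}\mathrm{Hom}_{\mathrm{Stab}_U(A.y,D)}(\mathbb{C}[A.y],\mathbb{C}[D]).
\end{align*}

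Next I would apply the translation isomorphism recalled before the corollary: for any fixed $y\in U$ the substitution $D\mapsto D':=D.y^{-1}$ yields a bijection of $\mathcal{O}_U(B)$ onto itself, and both the $\mathbb{C}$-dimension of the Hom space and the cardinality of the stabiliser are preserved, so the inner sum equals
\begin{align*}
\sum_{D'\in\mathcal{O}_U(B)}
\frac{|\mathrm{Stab}_U(A,D')|}{|U|}\,
\dim_{\mathbb{C}}\mathrm{Hom}_{\mathrm{Stab}_U(A,D')}(\mathbb{C}[A],\mathbb{C}[D']),
\end{align*}
which is independent of $y$. Summing over $y\in U$ then produces a factor $|U|$, which cancels against $1/|U|$ to leave $|U|/|\mathrm{Stab}_U(A)|$ times the $D'$-sum above; using $\dim_{\mathbb{C}}\mathrm{Hom}_{\mathrm{Stab}_U(A,D')}(\mathbb{C}[A],\mathbb{C}[D'])=\langle\chi_A,\chi_{D'}\rangle_{\mathrm{Stab}_U(A,D')}$ for the one-dimensional characters $\chi_A$ and $\chi_{D'}$, I recover exactly the stated formula.

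No step is genuinely hard; the only subtle point is to keep track of the two right actions carefully when asserting $|\mathrm{Stab}_U(A.y,D.y)|=|\mathrm{Stab}_U(A,D)|$, which follows from the conjugation relation $\mathrm{Stab}_U(X.y)=y^{-1}\mathrm{Stab}_U(X)y$ available for the right group action $-.-$ defined in \ref{action A dot g-3D4}. Thus the corollary is immediate from the formula and translation property recalled immediately before its statement.
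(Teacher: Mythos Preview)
Your proof is correct and is exactly the intended derivation: the paper states this Corollary without proof, as an immediate consequence of the double-sum formula and the translation property $\mathrm{Hom}_{\mathrm{Stab}_U(A,B)}(\mathbb{C}[A],\mathbb{C}[B])\cong\mathrm{Hom}_{\mathrm{Stab}_U(A.y,B.y)}(\mathbb{C}[A.y],\mathbb{C}[B.y])$ recalled just before it, and your argument makes that deduction explicit. One small wording slip: when you say the factor $|U|$ ``cancels against $1/|U|$'' and yet leaves $|U|/|\mathrm{Stab}_U(A)|$ times the $D'$-sum, the arithmetic comes out right (since your $D'$-sum still carries the $1/|U|$), but the phrasing is momentarily confusing; it would read more cleanly to say that summing over $y$ contributes $|U|$, so the total prefactor becomes $|U|/|\mathrm{Stab}_U(A)|$, and this $|U|$ then cancels the $1/|U|$ inside the $D'$-sum.
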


\begin{Proposition}\label{3D4-orth.}
Every $U$-orbit module is isomorphic to a hook-separated staircase module
in Table \ref{Table: hook sep.-3D4},
and they satisfy the following properties.
\begin{itemize}
 \setlength\itemsep{0em}
\item [(1)]
Let $A,B\in V$.
If $\mathrm{verge}_1(A)\neq \mathrm{verge}_1(B)$,
then $\mathrm{Hom}_{\mathbb{C}U}(\mathbb{C}\mathcal{O}_U([A]),\mathbb{C}\mathcal{O}_U([B]))=\{0\}$.
In particular, if $\mathbb{C}\mathcal{O}_U([A])\in \mathfrak{F}_i$,
$\mathbb{C}\mathcal{O}_U([B])\in \mathfrak{F}_j$ and $i\neq j$,
then
$\mathrm{Hom}_{\mathbb{C}U}(\mathbb{C}\mathcal{O}_U([A]),\mathbb{C}\mathcal{O}_U([B]))=\{0\}$.
\item [(2)]
In the family $\mathfrak{F}_{1,2}$, the $q^4$ hook-separated staircase modules
are irreducible
and pairwise orthogonal.
\item [(3)]
In the family $\mathfrak{F}_3$, the $(q^3-1)q^2$ hook-separated staircase modules
are irreducible
and pairwise orthogonal.
\item [(4)]
In the family $\mathfrak{F}_{4}$, $\mathfrak{F}_{5}$ and $\mathfrak{F}_{6}$,
the hook-separated staircase modules
are reducible.
\end{itemize}
\begin{table}[!htp]
\caption{Hook-separated staircase ${{^3}D}^{syl}_4{(q^3)}$-orbit modules}
\label{Table: hook sep.-3D4}
\begin{align*}
\begin{array}{|c|l|c|c|}\hline
\text{Family}
& \multicolumn{1}{c|}{\mathbb{C}\mathcal{O}_U([A]){\ } (A\in V)}
& \mathrm{dim}_{\mathbb{C}}\mathbb{C}\mathcal{O}_U([A])
& \text{Irreducible}\\\hline
\hline
\mathfrak{F}_6
&\rule{0pt}{20pt}
\mathbb{C}\mathcal{O}_U\Big(
{\left[%
\newcommand{\mc}[3]{\multicolumn{#1}{#2}{#3}}
\begin{array}{cccccccc}\cline{2-7}
\mc{1}{c|}{} & \mc{1}{c|}{A_{12}} & \mc{1}{c|}{×} & \mc{1}{c|}{×} & \mc{1}{c|}{×} & \mc{1}{c|}{×} & \mc{1}{c|}{A_{17}^*} & \\\cline{2-7}
× & \mc{1}{c|}{} & \mc{1}{c|}{0} & × & × & × &  & ×\\\cline{3-3}
\end{array}
\right]}\Big)
  & q^{7}
  & \text{NO}\\[10pt]\hline
\mathfrak{F}_5
& \rule{0pt}{20pt}
\mathbb{C}\mathcal{O}_U\Big(
{\left[%
\newcommand{\mc}[3]{\multicolumn{#1}{#2}{#3}}
\begin{array}{cccccccc}\cline{2-7}
\mc{1}{c|}{} & \mc{1}{c|}{×} & \mc{1}{c|}{A_{13}} & \mc{1}{c|}{×} & \mc{1}{c|}{×} & \mc{1}{c|}{A_{16}^*} & \mc{1}{c|}{×} & \\\cline{2-7}
× & \mc{1}{c|}{} & \mc{1}{c|}{A_{23}} & × & × & × &  & ×\\\cline{3-3}
\end{array}
\right]}\Big)
& q^{6}
& \text{NO}\\[10pt]\hline
\mathfrak{F}_4
& \rule{0pt}{20pt}
\mathbb{C}\mathcal{O}_U\Big(
{\left[%
\newcommand{\mc}[3]{\multicolumn{#1}{#2}{#3}}
\begin{array}{cccccccc}\cline{2-7}
\mc{1}{c|}{} & \mc{1}{c|}{×} & \mc{1}{c|}{×}
& \mc{1}{c|}{{A_{15}^*}^q} & \mc{1}{c|}{A_{15}^*} & \mc{1}{c|}{×} & \mc{1}{c|}{×} & \\\cline{2-7}
× & \mc{1}{c|}{} & \mc{1}{c|}{A_{23}} & × & × & × &  & ×\\\cline{3-3}
\end{array}
\right]}\Big)
 & q^6
 & \text{NO} \\[10pt]\hline
\mathfrak{F}_3
&\rule{0pt}{25pt}
\mathbb{C}\mathcal{O}_U\Big(
{\left[%
\newcommand{\mc}[3]{\multicolumn{#1}{#2}{#3}}
\begin{array}{cccccccc}\cline{2-7}
\mc{1}{c|}{}
 & \mc{1}{c|}{\bar{A}_{12}^{A_{13}^*}} & \mc{1}{c|}{A_{13}^*}
& \mc{1}{c|}{×} & \mc{1}{c|}{×} & \mc{1}{c|}{×} & \mc{1}{c|}{×} & \\\cline{2-7}
× & \mc{1}{c|}{} & \mc{1}{c|}{0} & × & × & × &  & ×\\\cline{3-3}
\end{array}
\right]}\Big)
& q
& \text{YES}  \\[10pt]\hline
\mathfrak{F}_{1,2}
& \rule{0pt}{20pt}
\mathbb{C}\mathcal{O}_U\Big(
{\left[%
\newcommand{\mc}[3]{\multicolumn{#1}{#2}{#3}}
\begin{array}{cccccccc}\cline{2-7}
\mc{1}{c|}{} & \mc{1}{c|}{A_{12}} & \mc{1}{c|}{×} & \mc{1}{c|}{×} & \mc{1}{c|}{×} & \mc{1}{c|}{×} & \mc{1}{c|}{×} & \\\cline{2-7}
× & \mc{1}{c|}{} & \mc{1}{c|}{A_{23}} & × & × & × &  & ×\\\cline{3-3}
\end{array}
\right]}\Big)
 & 1
 & \text{YES} \\[10pt]\hline
 \end{array}
\end{align*}
where $A_{12}^*,A_{13}^*, A_{15}^* \in \mathbb{F}_{q^3}^*$,
$A_{16}^*,A_{17}^*, A_{23}^* \in \mathbb{F}_{q}^*$
and $\bar{A}_{12}^{A_{13}^*}\in T^{A_{13}^*}$.
\end{table}
\end{Proposition}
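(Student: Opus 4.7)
The plan is to exploit the reduction to hook-separated staircase modules (Corollary \ref{3D4-classification, hook-sep.}) together with the induced-representation realisation
\begin{align*}
\mathbb{C}\mathcal{O}_U([A]) \cong \mathrm{Ind}_{\mathrm{Stab}_U(A)}^{U} \chi_A,
\end{align*}
where the restriction of $\chi_A(u) := \vartheta\kappa_q(A, f(u))$ to $\mathrm{Stab}_U(A)$ is a linear character, since $[A]*u = \chi_A(u)[A]$ for $u \in \mathrm{Stab}_U(A)$ (Theorem \ref{fund thm U-3D4}). The stabilisers in each of the five families are supplied by Proposition \ref{prop: 3D4-stab}, and the Hom-space formula displayed just before the statement reduces each assertion to a computation of character restrictions to common stabilisers $\mathrm{Stab}_U(C, D)$ with $C \in \mathcal{O}_U(A)$, $D \in \mathcal{O}_U(B)$.

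Part (1) proceeds contrapositively. Assuming $\chi_C = \chi_D$ on $\mathrm{Stab}_U(C, D)$ for some $C \in \mathcal{O}_U(A)$ and $D \in \mathcal{O}_U(B)$, I would verify case-by-case on the five families of Proposition \ref{prop: 3D4-stab} that the common stabiliser always contains at least one root subgroup among $\{X_1, X_2, X_5, X_6\}$ whose character values are governed exclusively by the first-row entries $A_{13}, A_{15}, A_{16}, A_{17}$ of $C$ and $D$ via $\pi_q$; injectivity of $y \mapsto \pi_q(x\,y)$ (the Corollary following Notation/Lemma \ref{pi_q, 3D4}) then forces $\mathrm{verge}_1(C) = \mathrm{verge}_1(D)$, and since $\mathrm{verge}_1$ is constant on orbits this contradicts $\mathrm{verge}_1(A) \neq \mathrm{verge}_1(B)$. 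The statement about different families follows because patterns in $\mathfrak{F}_i$ and $\mathfrak{F}_j$ ($i \neq j$) always have first verges supported in different columns. Part (2) is then immediate from Proposition \ref{prop: 3D4-stab}(1): patterns in $\mathfrak{F}_{1,2}$ are fixed by all of $U$, so $\mathbb{C}\mathcal{O}_U([A]) = \mathbb{C}[A]$ is one-dimensional with character $\chi_A$, and non-degeneracy of $\kappa_q|_{V \times V}$ (Corollary \ref{kappa q, nond, 3D4}) ensures the $q^4$ resulting linear characters are pairwise distinct.

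For part (3), I would verify Mackey's irreducibility criterion for $\mathrm{Ind}_{H}^{U}\chi_A$, where $H = X_1X_3X_4X_5X_6$ (Proposition \ref{prop: 3D4-stab}(2)) has index $q$ in $U$ with $X_2$ as a transversal. It therefore suffices to show that for every $g = x_2(t_2)$ with $t_2 \in \mathbb{F}_q^*$, the characters $\chi_A$ and $\chi_{A.g}$ disagree on $H \cap H^{g}$. Lemma \ref{prop: 3D4-orbit} shows that $A.x_2(t_2)$ differs from $A$ in the $(1,2)$-entry by $-A_{13}^*t_2$; evaluating both characters on a generic $x_1(r) \in H \cap H^{g}$ produces a discrepancy of the form $\vartheta\pi_q(r\cdot A_{13}^* t_2)$, which is non-zero for suitable $r \in \mathbb{F}_{q^3}$ by the injectivity statement for $\pi_q$. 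Pairwise orthogonality between the $(q^3 - 1)q^2$ distinct hook-separated representatives follows by the same criterion applied to the differences in the $A_{13}^*$ and $\bar{A}_{12}^{A_{13}^*}$ coordinates.

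For part (4) I apply the converse of Mackey's criterion: in each of $\mathfrak{F}_4, \mathfrak{F}_5, \mathfrak{F}_6$ I produce a single element $g \in U \setminus \mathrm{Stab}_U(A)$ such that $\chi_A$ and $\chi_{A.g}$ agree on $\mathrm{Stab}_U(A) \cap \mathrm{Stab}_U(A.g)$, forcing reducibility. The explicit stabilisers in Proposition \ref{prop: 3D4-stab}(3)--(5) make natural candidates visible, and Lemma \ref{3D4-x5 A17} provides the template in $\mathfrak{F}_6$. The main obstacle is part (3): one must carefully control the combined effect of $X_2$-conjugation, the projection $\pi_q$, and the transversal $T^{A_{13}^*}$, ensuring the character discrepancy genuinely lies outside $\ker \pi_q$ for every non-trivial $t_2$, so that Mackey's criterion actually applies.
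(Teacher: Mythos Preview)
Your overall plan is sound and for parts (1)--(3) runs parallel to the paper: both reduce to hook-separated representatives and then test characters on common stabilisers, the paper phrasing this as a direct inner-product computation $\langle\chi_A,\chi_C\rangle_{\mathrm{Stab}_U(A,C)}$ rather than invoking Mackey by name. One slip in your sketch of~(1): the root subgroups you want are $X_3,X_4,X_5,X_6$, not $X_1,X_2,X_5,X_6$. From the matrix form one has $\chi_A(x_3(t_3))=\vartheta\pi_q(-A_{13}t_3)$, $\chi_A(x_4(t_4))=\vartheta\pi_q(A_{15}^q t_4+A_{15}t_4^{q^2})$, $\chi_A(x_5(t_5))=\vartheta(A_{16}t_5)$ and $\chi_A(x_6(t_6))=\vartheta(A_{17}t_6)$, and it is these that detect the four possible first-verge coordinates ($X_1$ and $X_2$ see only $A_{12}$ and $A_{23}$). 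Lemma~\ref{3D4-A.xi, figures} shows $X_5X_6$ lies in every stabiliser, while $X_3$ or $X_4$ is available in the relevant cases; with this correction your argument goes through.

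The genuine divergence is part~(4). The paper does not attempt Mackey here at all but instead gives a one-line dimension count: a hook-separated module in $\mathfrak{F}_4$ or $\mathfrak{F}_5$ has dimension $q^6$, so if it were irreducible then $(q^6)^2=q^{12}=|U|$ would already exhaust the regular representation, leaving no room for the $q^4$ pairwise-inequivalent linear characters produced in part~(2) and shown orthogonal to it in part~(1); for $\mathfrak{F}_6$ one has $(q^7)^2>|U|$ outright. This recycles (1) and~(2) and needs no further stabiliser analysis. Your Mackey route would also succeed and has the merit of being constructive (it would exhibit an explicit intertwiner), but it requires working out the stabiliser intersections and character comparisons in each family, whereas the paper's argument is immediate once the dimensions in the table are known.
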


\begin{proof}
By \ref{3D4-orbit to staircase}
and \ref{3D4-classification, hook-sep.},
every $U$-orbit module is isomorphic to a hook-separated staircase module in Table \ref{Table: hook sep.-3D4}.
Let $A=A_{15}^*e_{15}+(A_{15}^*)^qe_{14}+A_{23}e_{23}\in \mathfrak{F}_4$,
$B=B_{15}^*e_{15}+(B_{15}^*)^qe_{14}+B_{23}e_{23}\in \mathfrak{F}_4$
(i.e. $A_{15}^*, B_{15}^*\in \mathbb{F}_{q^3}^*$)
and $C\in \mathcal{O}_U(B)$.
By \ref{prop: 3D4-stab}, we get
$
\mathrm{Stab}_U(A)
 \stackrel{A_{13}=0}{=}X_2X_4X_5X_6$.
Then
$\mathrm{Stab}_U(A, C)
=\left\{
\begin{array}{ll}
 X_2X_4X_5X_6, & C_{13}=0\\
 X_4X_5X_6, & C_{13}\neq 0\\
\end{array}
\right.$.
We calculate the inner product:
\begin{align*}
& \langle \chi_A, \chi_C\rangle_{\mathrm{Stab}_U(A, C)}
=\frac{1}{|{\mathrm{Stab}_U(A, C)}|}\sum_{y\in {\mathrm{Stab}_U(A, C)}} \vartheta \kappa_q(A-C, f(y))\\
=&\frac{1}{|{\mathrm{Stab}_U(A, C)}|}
  \sum_{y\in {\mathrm{Stab}_U(A, C)}}   \vartheta \kappa_q
  \left(
{%
\newcommand{\mc}[3]{\multicolumn{#1}{#2}{#3}}
\begin{array}{c|c|cccc}\hline
\mc{1}{|c|}{-C_{12}} & -C_{13} & \mc{1}{c|}{(A_{15}^*-B_{15}^*)^q}
                               & \mc{1}{c|}{A_{15}^*-B_{15}^*} & \mc{1}{c|}{×} & \mc{1}{c|}{×}\\\hline
× & A_{23}-B_{23} & × & × & × & ×\\\cline{2-2}
\end{array}
}%
,
f(y)\right).
\end{align*}
{If $C_{13}=0$, then }
\begin{align*}
&0\neq  \mathrm{dim}_\mathbb{C} \mathrm{Hom}_{\mathrm{Stab}_U(A, C)}(\mathbb{C}[ A ],\mathbb{C}[ C ])
= \langle \chi_A, \chi_C\rangle_{\mathrm{Stab}_U(A, C)}\\
&=\frac{1}{|X_2X_4X_5X_6|}\sum_{\substack{t_4\in \mathbb{F}_{q^3}\\ t_2, t_5,t_6\in \mathbb{F}_{q}} }
        \bigg( \vartheta \pi_q\Big((A_{23}-B_{23})t_2
         +(A_{15}^*-B_{15}^*)^qt_4+(A_{15}^*-B_{15}^*)t_4^{q^2}\Big)\bigg)\\
&=\bigg(\frac{1}{q}\sum_{t_2\in \mathbb{F}_{q}}  \vartheta ((A_{23}-B_{23})t_2)\bigg)
\bigg(\frac{1}{q^3}\sum_{ t_4\in \mathbb{F}_{q^3}} \vartheta \phi_0\Big((A_{15}^*-B_{15}^*)t_4^{q^2}(\eta+\eta^q)\Big)\bigg)\\
\iff &  \{B_{23}= A_{23}\} \wedge \{B^*_{15}=A^*_{15}\}.
\end{align*}
{If $C_{13}\neq 0$, then}
\begin{align*}
&0\neq  \mathrm{dim}_\mathbb{C} \mathrm{Hom}_{\mathrm{Stab}_U(A, C)}(\mathbb{C}[ A ],\mathbb{C}[ C ])
  = \langle \chi_A, \chi_C\rangle_{\mathrm{Stab}_U(A, C)}\\
&=\frac{1}{|X_4X_5X_6|}\sum_{\substack{t_4\in \mathbb{F}_{q^3}\\  t_5,t_6\in \mathbb{F}_{q}} }
        \bigg( \vartheta \pi_q\Big((A_{15}^*-B_{15}^*)^qt_4+(A_{15}^*-B_{15}^*)t_4^{q^2}\Big)\bigg)\\
&=\frac{1}{q^3}\sum_{ t_4\in \mathbb{F}_{q^3}} \vartheta \phi_0\Big((A_{15}^*-B_{15}^*)t_4^{q^2}(\eta+\eta^q)\Big)\\
\iff &   B^*_{15}=A^*_{15}.
\end{align*}
{We get}
$
 \mathrm{Hom}_{\mathrm{Stab}_U(A, C)}(\mathbb{C}[ A ],\mathbb{C}[ C ])\neq\{0\}
\iff \langle \chi_A, \chi_C\rangle_{\mathrm{Stab}_U(A, C)}\neq 0 \quad (i.e. =1)
\iff   B^*_{15}=A^*_{15}$.
{Thus}
$
 \mathrm{Hom}_{\mathbb{C}U}(\mathbb{C}\mathcal{O}_U([A]),\mathbb{C}\mathcal{O}_U([B]))=\{0\}
\iff  B^*_{15}\neq A^*_{15}$.

Let $A\in \mathfrak{F}_i$ and  $B\in \mathfrak{F}_j$,
$\psi_A$ denote the character of $\mathbb{C}\mathcal{O}_U([A])$
and  $\psi_B$ denote the character of $\mathbb{C}\mathcal{O}_U([B])$.
In a similar way, we calculate $ \langle \psi_A, \psi_B\rangle_{U}$.
Then the statements of (1) are proved.

The $q^4$ hook-separated staircase modules of $\mathfrak{F}_{1,2}$ are of dimension $1$,
so they are irreducible,
and pairwise orthogonal by calculating inner product.

Let $A,B\in V$ be hook-separated staircase patterns of the family $\mathfrak{F}_3$ and $A\neq B$.
Then  $\langle \psi_A, \psi_A\rangle_{U}=1$ and $\langle \psi_A, \psi_B\rangle_{U}=0$,
thus the statements of (3) are proved.

Let $A\in V$ be a hook-separated staircase core pattern of the family $\mathfrak{F}_4$.
Then the orbit module $\mathbb{C}\mathcal{O}_U([A])$ is reducible.
Suppose that it is irreducible.
Then by (1) and (2) we get
$\left(\dim_{\mathbb{C}} \mathbb{C}\mathcal{O}_U([A])\right)^2=q^{12}< |U|-q^4=q^{12}-q^4$.
This is a contradiction.
Thus the orbit modules of the family $\mathfrak{F}_4$ are reducible.
Similarly, (5) and (6) are proved.
\end{proof}

\begin{Remark}\label{hook-separated intersect-3D4}
There exist two hook-separated staircase modules such that
they are neither orthogonal nor isomorphic.
For example,
if $A,B\in V$ are hook-separated staircase core patterns of family $\mathfrak{F}_4$
with $A_{15}^*=B_{15}^*\in \mathbb{F}_{q^3}^*$ and $A_{23}\neq B_{23}$,
then
$ \langle \psi_A, \psi_A\rangle_{U}
 =\langle \psi_B, \psi_B\rangle_{U}=q^5+q^3-q^2  $
but
$ \langle \psi_A, \psi_B\rangle_{U}=q^5-q^2 \notin\{0,{\,} q^5+q^3-q^2\}$,
so $\mathbb{C}\mathcal{O}_U([A])$ and $\mathbb{C}\mathcal{O}_U([B])$
are neither orthogonal nor isomorphic.
\end{Remark}

\begin{Comparison}
\label{com:classification staircase U-modules-3D4}
\begin{itemize}
\setlength\itemsep{0em}
\item [(1)] (Classification of staircase $U$-modules).
Let $U$ be $A_n(q)$, $D_n^{syl}(q)$ or ${^3}D_4^{syl}(q^3)$.
Then every $U$-orbit module is isomorphic to a staircase $U$-module
(see \cite[Prop. 2.2 and Thm. 3.2]{yan2},
\cite[3.3.15]{Markus1}
and \ref{3D4-orbit to staircase}).
\item[(2)] (Irreducible $U$-modules).
Every irreducible $A_n(q)$-module is a constituent of precisely one staircase module
(see \cite[Thm. 2.4 and Cor. 2.7]{yan2}).
Every irreducible $D_n^{syl}(q)$-module is a constituent of an unique hook-separated staircase module
(see \cite[3.3.19 and 3.3.43]{Markus1}).
Every irreducible ${^3D}_4^{syl}(q^3)$-module is a constituent of a
(not necessarily unique)
hook-separated staircase module
(see \ref{3D4-classification, hook-sep.},
\ref{3D4-orth.}
and \ref{hook-separated intersect-3D4}).
\end{itemize}
\end{Comparison}



\section{A partition of ${^3D}_4^{syl}(q^3)$}
\label{partition of U-3D4}
Let $G:=G_8(q^3)$ and $U:={^3}D_4^{syl}(q^3)$.
In this section,
a partition of ${^3D}_4^{syl}(q^3)$ is determined (see \ref{superclass:ci ti-3D4})
which is a set of superclasses proved in the next section \ref{sec: supercharacter theories-3D4}.

\begin{Lemma}
Let $1$ denote $I_8\in G$.
Then
$ V_G:=G-1=\{ g-1\mid g\in G \} $
is a nilpotent associative $\mathbb{F}_q$-algebra (G is an algebra group).
\end{Lemma}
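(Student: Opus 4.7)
The plan is to exploit the inclusion $G\leqslant A_8(q^3)$: every $g\in G$ is upper unitriangular, so the bijection $g\mapsto g-1$ sends $G$ onto a set of strictly upper triangular $8\times 8$ matrices over $\mathbb{F}_{q^3}$. Consequently $V_G^{\,8}=0$ follows immediately from the fact that any product of eight $8\times 8$ strictly upper triangular matrices vanishes, so nilpotence is automatic once associative algebra closure is established. The identification $G=1+V_G$ is then tautological from the construction.

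The next step is to check that $V_G$ is an $\mathbb{F}_q$-subspace of $\mathrm{Mat}_{8\times 8}(\mathbb{F}_{q^3})$. Since the off-diagonal entries of $g$ and $g-1$ coincide, the conditions of \ref{larger group G8-3D4} transfer verbatim to $V_G$: $v_{4,5}=0$; $v_{2,3},v_{6,7}\in\mathbb{F}_q$; and the four Frobenius-coupling relations $v_{2,5}=v_{2,4}^q$, $v_{3,5}=v_{3,4}^q$, $v_{4,6}=v_{5,6}^q$, $v_{4,7}=v_{5,7}^q$. Each such condition is $\mathbb{F}_q$-linear because the Frobenius map $t\mapsto t^q$ is an $\mathbb{F}_q$-linear endomorphism of $\mathbb{F}_{q^3}$, so closure under addition and scalar multiplication by $\mathbb{F}_q$ is immediate.

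The substantive work is closure under matrix multiplication, and I expect this to be the main obstacle. For $v,w\in V_G$ and each controlled position $(i,j)$, I would expand $(vw)_{i,j}=\sum_{i<k<j}v_{i,k}w_{k,j}$ and verify that $vw$ still satisfies the defining constraints. The delicate verifications are the Frobenius-coupled pairs. For example, $(vw)_{2,5}=v_{2,3}w_{3,5}+v_{2,4}w_{4,5}$, and to match this with $(vw)_{2,4}^q=(v_{2,3}w_{3,4})^q$ one kills the second term using $w_{4,5}=0$ and pulls the Frobenius through $v_{2,3}$ using $v_{2,3}\in\mathbb{F}_q$. The symmetrically placed check at $(4,7)$ versus $(5,7)$ uses $v_{4,5}=0$ to annihilate the term $v_{4,5}w_{5,7}$ and uses $w_{6,7}\in\mathbb{F}_q$ to commute the Frobenius. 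The remaining checks---that $(vw)_{4,5}$ vanishes, that $(vw)_{2,3}$ and $(vw)_{6,7}$ lie in $\mathbb{F}_q$ (both in fact zero), and the two pairs $(vw)_{3,4}$ versus $(vw)_{3,5}$ and $(vw)_{5,6}$ versus $(vw)_{4,6}$---are routine because the relevant inner index ranges collapse to empty or produce a factor $v_{4,5}=0$ or $w_{4,5}=0$.

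Combining the three steps yields that $V_G$ is a nilpotent associative $\mathbb{F}_q$-algebra and hence that $G=1+V_G$ is an algebra group in the sense of Diaconis--Isaacs. The only place where genuine arithmetic intervenes is the balance between the two $\mathbb{F}_q$-entries at $(2,3)$ and $(6,7)$ and the zero at $(4,5)$, which is precisely what was engineered in \ref{larger group G8-3D4} so that Frobenius compatibility survives the bilinear multiplication map.
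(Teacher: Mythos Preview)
Your argument is correct. The paper states this lemma without proof (it is implicit in the definition \ref{larger group G8-3D4}, whose own proof is ``By direct calculation''), and your write-up supplies exactly the verification one would carry out: the defining constraints of $G_8(q^3)$ are $\mathbb{F}_q$-linear in the off-diagonal entries, and the multiplicative closure checks all collapse thanks to the zero at position $(4,5)$ and the $\mathbb{F}_q$-rationality at $(2,3)$ and $(6,7)$, precisely as you describe.
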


\begin{Notation/Lemma}
Let $1$ denote $I_8\in G$,
$g\in G$ and $u\in U$,
and set
$G(g-1)G:= \{x(g-1)y \mid x,y\in G\}\subseteq V_G$,
$C_g^G:= \{1+x(g-1)y \mid x,y\in G\}=1+G(g-1)G \subseteq G$,
and
$C_u^U:= \{1+x(u-1)y \mid x,y\in G\}\cap U\subseteq C_u^G$.
\end{Notation/Lemma}

\begin{Lemma}\label{3D4,biorbit-G}
Let $1$ denote $I_8\in G$
and $g,h\in G$.
Then the following statements are equivalent:
\begin{enumerate}
\item[(1)] there exist $x,y\in G$ such that $g-1=x(h-1)y$,
\item[(2)] $C_g^G=C_h^G$, and
\item[(3)] $g\in C_h^G$.
\end{enumerate}
\end{Lemma}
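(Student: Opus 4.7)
The plan is to establish the cycle of implications $(1) \Rightarrow (2) \Rightarrow (3) \Rightarrow (1)$, where $(2) \Rightarrow (3)$ and $(3) \Rightarrow (1)$ are essentially unpacking the definitions, and the only implication that needs an actual argument is $(1) \Rightarrow (2)$. The key observation that makes the proof work is that $G$ is a \emph{group} (not merely a set closed under addition or multiplication), so that elements $x,y\in G$ used to transform $h-1$ into $g-1$ are invertible inside $G$ itself, enabling the symmetric argument.

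For $(3) \Rightarrow (1)$: if $g\in C_h^G$, then by definition of $C_h^G$ there exist $x,y\in G$ with $g=1+x(h-1)y$, hence $g-1=x(h-1)y$. For $(2) \Rightarrow (3)$: taking $x=y=1=I_8\in G$ in the definition of $C_g^G$ yields $g=1+1\cdot(g-1)\cdot 1\in C_g^G=C_h^G$. Both of these are immediate.

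For the nontrivial implication $(1) \Rightarrow (2)$: assume $g-1=x(h-1)y$ for some $x,y\in G$. Then $G(g-1)G=Gx(h-1)yG\subseteq G(h-1)G$, since $Gx,yG\subseteq G$. Conversely, because $G\leqslant A_8(q^3)$ is a subgroup of the unitriangular group, the elements $x,y$ are invertible with $x^{-1},y^{-1}\in G$, so one may solve $h-1=x^{-1}(g-1)y^{-1}$, which gives $G(h-1)G\subseteq G(g-1)G$. Hence $G(g-1)G=G(h-1)G$, and adding $1$ to both sides yields $C_g^G=C_h^G$.

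The only point that could be viewed as a mild obstacle is making sure that the set-theoretic identity $C_g^G=C_h^G$ really does follow from the single equation $g-1=x(h-1)y$ and is not merely a one-sided containment; this is resolved precisely by invoking the group structure of $G$ to invert $x$ and $y$. No properties of $V_G$ as an algebra beyond its being closed under left and right $G$-multiplication are needed, and the whole argument is independent of the specific construction of $G_8(q^3)$ in Section~\ref{sec:bigger group}, relying only on $G\leqslant A_8(q^3)$ being a subgroup.
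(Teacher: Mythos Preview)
Your proof is correct. The paper does not supply a proof for this lemma at all (it is stated without a proof environment, being treated as elementary), so your cycle $(1)\Rightarrow(2)\Rightarrow(3)\Rightarrow(1)$ with the invertibility of $x,y$ in $G$ as the only substantive step is exactly the expected standard argument.
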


\begin{Corollary}
 The set $\{C_g^G \mid g\in G\}$ forms a partition of $G$
 with respect to the equivalence relations of \ref{3D4,biorbit-G}.
If $g\in G$, then $C_g^G$ is a union of conjugacy classes of $G$.
\end{Corollary}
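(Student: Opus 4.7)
The plan is to reduce the partition statement to the equivalent conditions already established in Lemma \ref{3D4,biorbit-G}, and then to exhibit conjugation as a special case of the biorbit operation.

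First I would show that the relation $g\sim h$ on $G$ defined by condition (1) of Lemma \ref{3D4,biorbit-G} (the existence of $x,y\in G$ with $g-1 = x(h-1)y$) is an equivalence relation. Reflexivity is immediate by taking $x=y=1_G$. Transitivity: if $g-1 = x(h-1)y$ and $h-1 = x'(k-1)y'$ for $x,y,x',y'\in G$, then $g-1 = (xx')(k-1)(y'y)$ with $xx',\,y'y\in G$. Symmetry is the only non-formal point: since $G \leqslant A_8(q^3) \leqslant GL_8(q^3)$, every element of $G$ is an invertible matrix, so from $g-1 = x(h-1)y$ one obtains $h-1 = x^{-1}(g-1)y^{-1}$ with $x^{-1},y^{-1}\in G$.

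Once $\sim$ is known to be an equivalence relation, Lemma \ref{3D4,biorbit-G} identifies its equivalence class at $g$ with $C_g^G$, so $\{C_g^G \mid g \in G\}$ partitions $G$.

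For the second assertion, let $g \in G$ and let $z \in G$ be arbitrary. Then
\[
zgz^{-1} \;=\; 1 + z(g-1)z^{-1},
\]
which is of the form $1 + x(g-1)y$ with $x := z \in G$ and $y := z^{-1} \in G$. Hence $zgz^{-1} \in C_g^G$, so the full $G$-conjugacy class of $g$ lies in $C_g^G$. Because distinct classes $C_g^G$ are disjoint by the partition property, each $C_g^G$ is a (disjoint) union of $G$-conjugacy classes.

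There is essentially no obstacle here beyond the symmetry check: the only thing that could fail would be the inverses $x^{-1},y^{-1}$ not belonging to $G$, but this is automatic since $G$ is a subgroup of $GL_8(q^3)$. The rest is bookkeeping translating between the three equivalent formulations in Lemma \ref{3D4,biorbit-G}.
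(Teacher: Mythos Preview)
Your proof is correct; the paper states this corollary without proof, treating it as an immediate consequence of Lemma \ref{3D4,biorbit-G} and the observation that conjugation is a special case of the two-sided $G$-action. Your write-up supplies exactly the routine verification the paper omits.
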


\begin{Lemma}\label{3D4,biorbit-U}
 Let $u,v\in U$.
 Then the following statements are equivalent:
\begin{enumerate}
\item[(1)] there exist $x,y\in G$ such that $u-1=x(v-1)y$,
\item[(2)] $C_u^U=C_v^U$, and
\item[(3)] $u\in C_v^U$.
\end{enumerate}
\end{Lemma}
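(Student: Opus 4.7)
The plan is to reduce everything to the analogous statement for the algebra group $G$, i.e.\ Lemma~\ref{3D4,biorbit-G}, using only the fact that $C_u^U = C_u^G \cap U$ together with the tautology $u \in C_u^U$. Since the defining sets $\{x(u-1)y \mid x,y\in G\}$ in condition~(1) and in the definition of $C_u^U$ involve the \emph{same} group $G$ (not $U$), no delicate group-theoretic issue arises from the intersection with $U$; one only has to check that $u,v \in U$ guarantees that passing from $G$-biorbits to $U$-biorbits preserves the equivalence.

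Concretely, I would proceed in the following order. First, I would observe that (1) says precisely that $u-1 \in G(v-1)G$, so by applying Lemma~\ref{3D4,biorbit-G} with $g=u$, $h=v$ we obtain the reformulation $(1) \Longleftrightarrow u \in C_v^G$. Since $u \in U$ by hypothesis, this is equivalent to $u \in C_v^G \cap U = C_v^U$, giving $(1) \Longleftrightarrow (3)$.

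Next, for $(3) \Longrightarrow (2)$, I would use the stronger consequence of Lemma~\ref{3D4,biorbit-G}: if $u \in C_v^U \subseteq C_v^G$, then that lemma yields $C_u^G = C_v^G$, and intersecting both sides with $U$ gives $C_u^U = C_v^U$. For $(2) \Longrightarrow (3)$, I would note that $u = 1 + 1\cdot(u-1)\cdot 1$ together with $u \in U$ shows $u \in C_u^U$, hence if $C_u^U = C_v^U$ then $u \in C_v^U$.

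There is essentially no substantive obstacle: the whole argument is a formal consequence of Lemma~\ref{3D4,biorbit-G} and the definition $C_u^U = C_u^G \cap U$. The only mild point worth flagging in the writeup is that the biorbit data $x,y$ witnessing condition~(1) are allowed to range over the larger group $G$ rather than $U$, which is exactly what makes the translation to $C_v^U$ immediate; were one to restrict $x,y$ to $U$, the equivalences would no longer follow so cheaply.
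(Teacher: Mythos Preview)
Your proposal is correct and complete. The paper states this lemma without an explicit proof, presumably because it follows immediately from Lemma~\ref{3D4,biorbit-G} together with the definition $C_u^U = C_u^G \cap U$ in exactly the way you outline; so your argument is the intended one.
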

\begin{Corollary}
 The set $\{C_u^U \mid u\in U\}$ forms a partition of $U$
 with respect to the equivalence relations of \ref{3D4,biorbit-U}.
 If $u\in U$, then $C_u^U$ is a union of conjugacy classes of $U$.
\end{Corollary}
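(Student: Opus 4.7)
The plan is to derive both assertions directly from Lemma \ref{3D4,biorbit-U}, which supplies the relevant equivalence of conditions. The overall strategy is analogous to the preceding corollary for $G$, but with the extra intersection with $U$, so I would simply check that the relation ``$u\in C_v^U$'' on $U$ is reflexive, symmetric and transitive, and then verify that $U$-conjugation preserves the classes $C_u^U$.

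First I would show that the binary relation $\sim$ on $U$ defined by $u\sim v :\iff u\in C_v^U$ is an equivalence relation. Reflexivity is immediate since $u = 1 + 1\cdot(u-1)\cdot 1 \in C_u^U$. For symmetry and transitivity, I would invoke Lemma \ref{3D4,biorbit-U}: the lemma tells us that $u\in C_v^U$ is equivalent to $C_u^U=C_v^U$. Hence $u\sim v \iff C_u^U = C_v^U$, from which symmetry and transitivity are automatic. Consequently the sets $\{C_u^U \mid u\in U\}$ are the equivalence classes of $\sim$ and therefore partition $U$.

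Second, for the union-of-conjugacy-classes claim, I would fix $u\in U$ and an arbitrary $y\in U$, and show that $y^{-1}uy \in C_u^U$. The key computation is
\begin{equation*}
y^{-1}uy - 1 = y^{-1}uy - y^{-1}y = y^{-1}(u-1)y,
\end{equation*}
so that $y^{-1}uy = 1 + y^{-1}(u-1)y$. Because $U \leqslant G$, both $y^{-1}$ and $y$ lie in $G$, so this expression exhibits $y^{-1}uy$ as an element of $1 + G(u-1)G = C_u^G$. Since $y^{-1}uy$ also lies in $U$, it belongs to $C_u^G \cap U = C_u^U$. Thus the $U$-conjugacy class of $u$ is contained in $C_u^U$, and combined with the partition property from the first step this forces $C_u^U$ to be a union of full $U$-conjugacy classes.

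No real obstacle arises: once Lemma \ref{3D4,biorbit-U} is in hand, the partition statement is formal, and the conjugacy-invariance reduces to the elementary identity $y^{-1}uy - 1 = y^{-1}(u-1)y$ together with the inclusion $U\subseteq G$. The only point that deserves a moment of care is to make sure that $y^{-1}uy$ is actually in $U$ (not merely in $G$), which is clear since $U$ is a subgroup of $G$.
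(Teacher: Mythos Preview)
Your proposal is correct and is exactly the intended argument: the paper states this corollary without proof, treating it as an immediate consequence of Lemma~\ref{3D4,biorbit-U}, and your derivation (equivalence relation via $u\in C_v^U\iff C_u^U=C_v^U$, conjugacy-invariance via $y^{-1}uy-1=y^{-1}(u-1)y$ and $U\leqslant G$) is precisely the standard unpacking of that lemma.
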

We obtain a partition of ${^3{D}}_4^{syl}(q^3)$
by straightforward calculation.
\begin{Proposition}[A partition of ${^3{D}}_4^{syl}(q^3)$]\label{partition-3D4}
Let $T^{t_1^*}$ $({t_1^*}\in \mathbb{F}_{q^3})$ be the transversal for $t_1^*\mathbb{F}_{q}^+$ in $\mathbb{F}_{q^3}^+$.
Then the $C_u^U$ with $u\in U$ are given in Table \ref{table:partition-3D4}:
\begin{table}[!htp]
\caption{A partition of ${^3{D}}_4^{syl}(q^3)$}
\label{table:partition-3D4}
\begin{align*}
\begin{array}{|c|c|l|c|}\hline
u \in U
& \# C_u^U
&
\multicolumn{1}{c|}{C_u^U}
& |C_u^U|\\
\hline
\hline
I_8 & 1 & x(0,0,0,0,0,0) & 1\\\hline
x_6(t_6^*)
& q-1 & x(0,0,0,0,0,t_6^*) & 1\\\hline
x_5(t_5^*) & q-1
& x(0,0,0,0,t_5^*,s_6),{\ }
\forall{\,}
   s_6\in \mathbb{F}_q
& q\\\hline
x_4(t_4^*) & q^3-1
& x(0,0,0,t_4^*,s_5,s_6),{\ }
\forall{\,}
s_5,s_6\in \mathbb{F}_q
& q^2\\\hline
x_3(t_3^*) & q^3-1
& x(0,0,t_3^*,s_4,s_5,s_6),{\ }
\forall{\,}
s_4\in \mathbb{F}_{q^3}, s_5,s_6\in \mathbb{F}_q
& q^5\\\hline
\hline
x_2(t_2^*)x_4(t_4^*) & (q-1)(q^3-1)
& x(0,t_2^*,s_3,t_4^*-\frac{s_3^{q+1}}{t_2^*},s_5,s_6),{\ }
\forall{\,} s_3 \in \mathbb{F}_{q^3}, s_5,s_6\in \mathbb{F}_q
&  q^5 \\\hline
x_2(t_2^*)x_5(t_5)  & (q-1)q
&
x(0,t_2^*,s_3,-\frac{s_3^{q+1}}{t_2^*},t_5+\frac{s_3^{q^2+q+1}}{{t_2^*}^2},s_6),{\ }
\forall{\,} s_3 \in \mathbb{F}_{q^3}, s_6\in \mathbb{F}_q
& q^4\\\hline
\hline
x_1(t_1^*)x_3({\bar{t}}_3^{t_1^*}) & (q^3-1)q^2
& x(t_1^*,0,\bar{t}_3^{t_1^*}+t_1^*s,s_4,s_5,s_6),{\ }
\forall{\,}
s_4 \in \mathbb{F}_{q^3}, s,s_5,s_6\in \mathbb{F}_q
& q^6 \\\hline
x_2(t_2^*)x_1(t_1^*) & (q-1)(q^3-1)
& x(t_1^*,t_2^*,s_3,s_4,s_5,s_6),{\ }
\forall{\,}
s_3,s_4 \in \mathbb{F}_{q^3}, s_5,s_6\in \mathbb{F}_q
& q^8\\\hline
\end{array}
\end{align*}
where ${\bar{t}}_3^{t_1^*}\in T^{t_1^*}$.
\end{table}
\end{Proposition}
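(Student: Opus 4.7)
My plan is to compute $C_u^U$ for each candidate representative $u$ by first determining the $G$-biorbit $G(u-I_8)G \subseteq V_G$ under $(x,y)\cdot v = xvy$, then intersecting $I_8 + G(u-I_8)G$ with $U$. Since $G_8(q^3)$ is generated by the elements $\dot{x}_{i,j}(t)$ of Proposition \ref{pairwise diff-G8q3}, left multiplication acts on $u - I_8$ through the associated elementary row operations and right multiplication through the corresponding column operations, so the reduction to canonical form is a bilinear Gauss elimination constrained by the defining relations of $G_8(q^3)$ from Definition/Lemma \ref{larger group G8-3D4} (notably $g_{4,5}=0$, $g_{2,3},g_{6,7}\in\mathbb{F}_q$, and the twisted conditions $g_{2,5}=g_{2,4}^q$, $g_{3,5}=g_{3,4}^q$, $g_{4,6}=g_{5,6}^q$, $g_{4,7}=g_{5,7}^q$).

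Next I would write $u - I_8$ explicitly using Proposition \ref{sylow p-subg, 3D4} and split into cases according to the support pattern of $u$: (i) $t_1 = t_2 = 0$, with subcases indexed by the leftmost nonzero parameter among $t_3, t_4, t_5, t_6$; (ii) $t_1 = 0$, $t_2 \neq 0$, subdivided by whether $t_4$ or $t_5$ is the effective pivot after absorbing the quadratic correction $s_3^{q+1}/t_2^*$ that appears in Table \ref{table:partition-3D4}; (iii) $t_1 \neq 0$, $t_2 = 0$; and (iv) $t_1 \neq 0$, $t_2 \neq 0$. In each case I would pick the pivot entry of $u - I_8$, use $G$-row and $G$-column moves to clear everything in its row and column that can be cleared, read off the canonical representative, and then apply Proposition \ref{sylow p-subg, 3D4} again in reverse to intersect the resulting affine set with $U$, recovering the free parameters $(s_3, s_4, s_5, s_6)$ listed in Table \ref{table:partition-3D4}; the size $|C_u^U|$ then equals $q$ raised to the number of those free parameters.

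The main obstacle is case (iii), where $t_1 \neq 0$ and $t_2 = 0$: here $t_3$ cannot be eliminated entirely, because the $G$-moves capable of cancelling the $(1,3)$-entry require a compensating change in rows $2$ and $3$ which, once the result is forced back into $U$, permits shifts of $t_3$ only within the $\mathbb{F}_q$-line $t_1^*\mathbb{F}_q^+$ rather than the full $t_1^*\mathbb{F}_{q^3}^+$. This is what forces the transversal $T^{t_1^*}$ of $t_1^*\mathbb{F}_q^+$ in $\mathbb{F}_{q^3}^+$ to appear as a discrete parameter, contributing the factor $q^2 = |T^{t_1^*}|$ to the count $(q^3-1)q^2$. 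Once all cases are settled, pairwise disjointness follows from the uniqueness of the canonical form within each case, and exhaustiveness is confirmed by summing $(\#C_u^U)\cdot|C_u^U|$ across Table \ref{table:partition-3D4} and checking that the total equals $q^{12} = |U|$.
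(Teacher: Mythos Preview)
Your plan is correct and is precisely the computation the paper has in mind: the paper's entire proof reads ``We obtain a partition of ${^3D}_4^{syl}(q^3)$ by straightforward calculation,'' and your proposal is a faithful unpacking of that calculation---biorbit reduction in $V_G$ via the row/column moves generated by the $\dot{x}_{i,j}(t)$, case splitting on the leading nonzero parameters of $u$, and a cardinality check. Your identification of the key subtlety in case $t_1\neq 0$, $t_2=0$ is exactly right: the constraint $g_{2,3}\in\mathbb{F}_q$ in $G_8(q^3)$ means the only column move touching the $(1,3)$-entry via the pivot $t_1^*$ at $(1,2)$ is right multiplication by $\dot{x}_{2,3}(s)$ with $s\in\mathbb{F}_q$, giving shifts of $t_3$ only within $t_1^*\mathbb{F}_q$ and hence the transversal $T^{t_1^*}$.
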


\begin{Notation/Lemma}\label{superclass:ci ti-3D4}
Set
 \begin{align*}
  & C_6(t_6^*):= C_{x_6(t_6^*)}^U,\quad
  C_5(t_5^*):= C_{x_5(t_5^*)}^U,\quad
  C_4(t_4^*):= C_{x_4(t_4^*)}^U,\quad
  C_3(t_3^*):= C_{x_3(t_3^*)}^U,\\
  & C_2(t_2^*):= \big(\bigcup_{t_4^*\in \mathbb{F}_{q^3}^*}^{.}{ C_{x_2(t_2^*)x_4(t_4^*)}^U}\big)
                \dot{\bigcup}
                \big(\bigcup_{t_5\in \mathbb{F}_q}^{.}{ C_{x_2(t_2^*)x_5(t_5)}^U}\big),\\
  & C_{1,3}(t_1^*,{\bar{t}}_3^{t_1^*}):=
                 { C_{x_1(t_1^*)x_3({\bar{t}}_3^{t_1^*})}^U},\quad
  C_{1,2}(t_1^*,t_2^*):= C_{x_2(t_2^*)x_1(t_1^*)}^U,\quad
  C_0: = \{1_U\}=\{ I_8 \}.
 \end{align*}
Note that these sets form a partition of $U$, denoted by $\mathcal{K}$.
\end{Notation/Lemma}

\begin{Comparison}[Superclasses]
\label{com:superclasses-3D4}
\begin{itemize}
 \setlength\itemsep{0em}
\item [(1)]
The superclasses of $A_n(q)$
are the sets
$C_g^{AY}:=\{I_{n}+x(g-I_n)y\mid x,y \in A_n(q)\}$
for all $g\in A_n(q)$,
which are called \textbf{Andr\'{e}-Yan superclasses}
(see \cite[2.2.30 and 3.5.3]{Markus1}).
\item [(2)]
The superclasses of $D_n^{syl}(q)$ are the sets
$C_u^{AN}:=D_n^{syl}(q) \cap \{I_{2n}+x(u-I_{2n})y\mid x,y \in A_{2n}(q)\}$
for all $u\in D_n^{syl}(q)$
(see \cite[page 1279]{an2}),
which are called \textbf{Andr\'{e}-Neto superclasses}
(see \cite[3.5.5]{Markus1}).
\item [(3)]
The superclasses of ${^3}D_4^{syl}(q^3)$ are the elements of
$\mathcal{K}$
(see \ref{superclass:ci ti-3D4} and \ref{supercharacter theory-3D4}).
\end{itemize}
\end{Comparison}


\section{A supercharacter theory for ${{^3D}_4^{syl}}(q^3)$}
\label{sec: supercharacter theories-3D4}

In this section, we determine a supercharacter theory for ${{^3D}_4^{syl}}(q^3)$ (\ref{supercharacter theory-3D4}),
and establish the supercharacter table of ${{^3D}_4^{syl}}(q^3)$ in
Table \ref{table:supercharacter table-3D4}.
Let $U:={^3}D_4^{syl}(q^3)$,
$A_{12}^*,A_{13}^*,A_{15}^*\in \mathbb{F}_{q^3}^*$ and
$A_{16}^*,A_{17}^*,A_{23}^*\in \mathbb{F}_q^*$.
\begin{Definition}
\label{pre-supercharacter theory}
Let $G$ be a finite group.
Suppose that $\mathcal{K}$ is a partition of $G$
and that $\mathcal{X}$ is a set of (nonzero) complex characters of $G$,
such that
\begin{itemize}
 \setlength\itemsep{0em}
 \item [(a)] $|\mathcal{X}|=|\mathcal{K}|$,
 \item [(b)] every character $\chi \in \mathcal{X}$ is constant on each member of $\mathcal{K}$ and
 \item [(c)] the elements of $\mathcal{X}$ are pairwise orthogonal.
\end{itemize}
Then $(\mathcal{X},\mathcal{K})$ is called a
\textbf{pre-supercharacter theory}
for $G$.
The function $\varphi\colon G\to \mathbb{C}$ is called a
\textbf{superclass function},
if it satisfies (b).
In particular, the superclass functions form a $\mathbb{C}$-vector space.
\end{Definition}

\begin{Definition/Lemma}[\S 2 of \cite{di}/3.6.2 of \cite{Markus1}]
\label{supercharacter theory}

Let $G$ be a finite group
and $(\mathcal{X},\mathcal{K})$ be a {pre-supercharacter theory} for $G$.
For every $\chi \in \mathcal{X}$,
let $\mathrm{Irr}(\chi)$ denote the set of all irreducible constituents of $\chi$.
Set $\sigma_{\chi}:=\sum_{\psi \in \mathrm{Irr}(\chi)}{\psi(1)\psi}$.
Then the following statements are equivalent.
\begin{itemize}
 \setlength\itemsep{0em}
 \item [(1)] The set $\{1\}$ is a member of $\mathcal{K}$.
 \item [(2)] $\cup_{\chi\in \mathcal{X}}{\mathrm{Irr}(\chi)}=\mathrm{Irr}(G)$
              and every character $\chi\in \mathcal{X}$ is a constant multiple of $\sigma_{\chi}$.
 \item [(3)] Every irreducible character $\psi$ of $G$ is a constituent of one character
              $\chi\in \mathcal{X}$.
\end{itemize}
Then $(\mathcal{X},\mathcal{K})$
is called a \textbf{supercharacter theory} for $G$,
if one of the three statements holds.
We refer to the elements of $\mathcal{X}$ as \textbf{supercharacters},
and to the elements of $\mathcal{K}$ as \textbf{superclasses}
of $G$.
A $\mathbb{C}G$-module is called a $\mathbb{C}G$-\textbf{supermodule},
if it affords a supercharacter of $G$.
Note that $\sum_{\chi \in \mathcal{X}}{\sigma_{\chi}}=\mathrm{reg}_G$, the regular character of $G$.
\end{Definition/Lemma}

\begin{Notation/Lemma}\label{notation:supermodules-3D4}
Let $A=(A_{ij})\in V$,
and set
 \begin{align*}
 M{(A_{12}e_{12}+A_{23}e_{23})}:=&
 \mathbb{C}\mathcal{O}_{U}([A_{12}e_{12}+A_{23}e_{23}])
=\mathbb{C}[A_{12}e_{12}+A_{23}e_{23}],
\end{align*}
\begin{align*}
 M{(A_{13}^*e_{13}+{\bar{A}_{12}^{A_{13}^*}} e_{12})}:=&
{\mathbb{C}\left\{
\left[{\newcommand{\mc}[3]{\multicolumn{#1}{#2}{#3}}
\begin{array}{cccccccc}\cline{2-7}
\mc{1}{c|}{ × }
& \mc{1}{c|}{
\rule{0pt}{15pt}
\bar{A}_{12}^{A_{13}^*}+s_2A_{13}^*}
& \mc{1}{c|}{{A_{13}^*}}
& \mc{1}{c|}{×}
& \mc{1}{c|}{×} & \mc{1}{c|}{×} & \mc{1}{c|}{×} &  × \\\cline{2-7}
× & \mc{1}{c|}{ × } & \mc{1}{c|}{} & × & × & × &  ×  & ×\\\cline{3-3}
\end{array}
}\right]
{\, }\middle|{\, } s_2\in \mathbb{F}_{q}
\right\}
}\\
=& \mathbb{C}\mathcal{O}_{U}([A_{13}^*e_{13}+{\bar{A}_{12}^{A_{13}^*}} e_{12}]),
\end{align*}
\begin{align*}
M{({A_{15}^*}^qe_{14}+A_{15}^*e_{15})}:=
&
{\mathbb{C}\left\{
\left[{\newcommand{\mc}[3]{\multicolumn{#1}{#2}{#3}}
\begin{array}{cccccccc}\cline{2-7}
\mc{1}{c|}{ × }
& \mc{1}{c|}{A_{12}}
& \mc{1}{c|}{A_{13}}
& \mc{1}{c|}{{{A_{15}^*}^q}}
& \mc{1}{c|}{{A_{15}^*}} & \mc{1}{c|}{×} & \mc{1}{c|}{×} &  × \\\cline{2-7}
× & \mc{1}{c|}{ × } & \mc{1}{c|}{A_{23}} & × & × & × &  ×  & ×\\\cline{3-3}
\end{array}}
\right]
{\, }\middle|{\, }
{\left\{\begin{array}{l}
A_{12},A_{13}\in \mathbb{F}_{q^3}\\
A_{23}\in \mathbb{F}_{q}
\end{array}\right.}
\right\}
}\\
=& \bigoplus_{A_{23}\in \mathbb{F}_{q}}
\mathbb{C}\mathcal{O}_{U}
([{A_{15}^*}^qe_{14}+A_{15}^*e_{15}+A_{23}e_{23}]),
\end{align*}
\begin{align*}
 M{(A_{16}^*e_{16})}:=
&
{\mathbb{C}\left\{
\left[{\newcommand{\mc}[3]{\multicolumn{#1}{#2}{#3}}
\begin{array}{cccccccc}\cline{2-7}
\mc{1}{c|}{ × }
& \mc{1}{c|}{A_{12}}
& \mc{1}{c|}{A_{13}}
& \mc{1}{c|}{A_{15}^q}
& \mc{1}{c|}{A_{15}} & \mc{1}{c|}{{A_{16}^*}} & \mc{1}{c|}{×} &  × \\\cline{2-7}
× & \mc{1}{c|}{ × } & \mc{1}{c|}{A_{23}} & × & × & × &  ×  & ×\\\cline{3-3}
\end{array}
}\right]
{\, }\middle|{\, }
{\left\{\begin{array}{l}
A_{12},A_{13},A_{15}\in \mathbb{F}_{q^3}\\
A_{23}\in \mathbb{F}_{q}
\end{array}\right.}
\right\}
}\\
=& \bigoplus_{\substack{A_{13}\in \mathbb{F}_{q^3}\\A_{23}\in \mathbb{F}_{q}}}
\mathbb{C}\mathcal{O}_{U}
([A_{16}^*e_{16}+A_{13}e_{13}+A_{23}e_{23}]),
\end{align*}
\begin{align*}
 M{(A_{17}^*e_{17})}:=
&
{\mathbb{C}\left\{
\left[{\newcommand{\mc}[3]{\multicolumn{#1}{#2}{#3}}
\begin{array}{cccccccc}\cline{2-7}
\mc{1}{c|}{ × }
& \mc{1}{c|}{A_{12}}
& \mc{1}{c|}{A_{13}}
& \mc{1}{c|}{A_{15}^q}
& \mc{1}{c|}{A_{15}} & \mc{1}{c|}{A_{16}} & \mc{1}{c|}{{A_{17}^*}} &  × \\\cline{2-7}
× & \mc{1}{c|}{ × } & \mc{1}{c|}{} & × & × & × &  ×  & ×\\\cline{3-3}
\end{array}
}\right]
{\, }\middle|{\, }
{\left\{\begin{array}{l}
A_{12},A_{13},A_{15}\in \mathbb{F}_{q^3}\\
A_{16}\in \mathbb{F}_{q}
\end{array}\right.}
\right\}
}\\
=& \bigoplus_{A_{12}\in \mathbb{F}_{q^3}}
\mathbb{C}\mathcal{O}_{U}
([A_{17}^*e_{17}+A_{12}e_{12}]).
\end{align*}
 Denote by $\mathcal{M}$
 the set of all of the above $\mathbb{C}U$-modules.
\end{Notation/Lemma}

\begin{Lemma}\label{notation:supermodules and G-module-3D4}
Let $A=(A_{ij})\in V$ and $G:=G_8(q^3)$.
Then
all $G$-orbit modules are irreducible,
and every $U$-module in $\mathcal{M}$ is
a direct sum of restrictions of some $G_8(q^3)$-orbit modules to ${{^3D}_4^{syl}}(q^3)$
as follows:
 \begin{align*}
 & M{(A_{12}e_{12}+A_{23}e_{23})}=
  \mathrm{Res}^G_U \mathbb{C}\mathcal{O}_{G}([A_{12}e_{12}+A_{23}e_{23}]),\\
& M{(A_{13}^*e_{13}+{\bar{A}_{12}^{A_{13}^*}} e_{12})}=
 \mathrm{Res}^G_U \mathbb{C}\mathcal{O}_{G}([A_{13}^*e_{13}+{\bar{A}_{12}^{A_{13}^*}} e_{12}]),\\
& M{({A_{15}^*}^qe_{14}+A_{15}^*e_{15})}=
  \bigoplus_{A_{23}\in \mathbb{F}_{q}}
\mathrm{Res}^G_U \mathbb{C}\mathcal{O}_{G}
([{A_{15}^*}^qe_{14}+A_{15}^*e_{15}+A_{23}e_{23}]),\\
&  M{(A_{16}^*e_{16})}= \bigoplus_{A_{23}\in \mathbb{F}_{q}}
\mathrm{Res}^G_U \mathbb{C}\mathcal{O}_{G}
([A_{16}^*e_{16}+A_{23}e_{23}]),
 \quad
 M{(A_{17}^*e_{17})}=
 \mathrm{Res}^G_U \mathbb{C}\mathcal{O}_{G}
([A_{17}^*e_{17}]).
\end{align*}
\end{Lemma}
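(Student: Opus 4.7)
The plan has two parts corresponding to the two assertions of the lemma: the irreducibility of every $G$-orbit module, and the explicit identification of each $M(\,\cdot\,)\in\mathcal{M}$ as the restriction to $U$ of a direct sum of $G$-orbit modules.

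For the identification, I would proceed family by family. Every $M(B)\in\mathcal{M}$ is defined in \ref{notation:supermodules-3D4} as the $\mathbb{C}$-span of an explicit set $\mathcal{S}_B\subseteq V$ of monomial basis elements $[A]$, which decomposes as a disjoint union of $U$-orbits; every summand on the right-hand side is a restriction $\mathrm{Res}^G_U\mathbb{C}\mathcal{O}_G([B'])$ whose $\mathbb{C}$-basis is indexed by the $G$-orbit $\mathcal{O}_G(B')$. Since $\mathrm{Res}^G_U\mathbb{C}\mathcal{O}_G([B'])$ decomposes as the direct sum of the $U$-orbit submodules corresponding to the $U$-orbits contained in $\mathcal{O}_G(B')$, the claim reduces, for each representative $B'$, to computing $\mathcal{O}_G(B')$ and matching it, as a union of $U$-orbits, with the appropriate sub-collection of $\mathcal{S}_B$. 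I would carry this out using the formula $B.g=\pi(Bg^{-\top})$ from \ref{action A dot g-3D4} together with the normal form \ref{pairwise diff-G8q3} for $G$: concretely, evaluate the action on the canonical representative of the extra generators $\dot{x}_{i,j}(t)$ that lie in $G$ but not in $U$, and verify that they absorb precisely the parameters appearing in the $U$-orbit decomposition of $\mathcal{S}_B$ that are not indexed in the direct sum on the right-hand side.

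For the irreducibility of the $G$-orbit modules, I would apply the Mackey-style inner product formula displayed just before \ref{3D4-orth.}, with $U$ replaced by $G$:
\[
\langle\psi_B^G,\psi_B^G\rangle_G
=\sum_{D\in\mathcal{O}_G(B)}\frac{|\mathrm{Stab}_G(B,D)|}{|\mathrm{Stab}_G(B)|}\,\langle\chi_B,\chi_D\rangle_{\mathrm{Stab}_G(B,D)},
\]
where $\psi_B^G$ denotes the character of $\mathbb{C}\mathcal{O}_G([B])$. The monomial formula $\chi_B(g)=\vartheta\kappa_q(B,f(g))$ and the non-degeneracy of $\kappa_q|_{V\times V}$ from \ref{kappa q, nond, 3D4} together reduce each inner product on the right-hand side to a character sum over a subspace of $V$, which vanishes unless $D=B$. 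Carrying this out case-by-case for the five representatives $B$ appearing in the statement yields $\langle\psi_B^G,\psi_B^G\rangle_G=1$, and hence irreducibility.

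The main obstacle will be the explicit computation of $\mathrm{Stab}_G(B)$, and correspondingly of $\mathcal{O}_G(B)$, for each family. Although $|G|=q^{65}$ is large, many generators act trivially on the rank-one patterns under consideration; the non-trivial ones interact with the twisted projection $\pi$ rather than with the plain $\pi_J$, because of the various Galois-type constraints in the definitions of $V$ and $G_8(q^3)$. Careful bookkeeping via \ref{pi=piJ, 3D4} and \ref{pi_J,AgT,3D4} is therefore needed to ensure that $\pi(Bg^{-\top})$ is evaluated correctly. A secondary difficulty arises in family $\mathfrak{F}_4$, where the $\mathbb{F}_q$-linear automorphism property of $t\mapsto ut^{q^2}+u^qt^q$ from \ref{zeta_u} is required to confirm that the $A_{23}\in\mathbb{F}_q$ parameter is reached by the $G$-action exactly once, thereby giving the claimed direct sum indexed by $\mathbb{F}_q$.
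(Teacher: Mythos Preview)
The paper states this lemma without proof, so there is no argument to compare against; your proposal is effectively supplying the omitted verification, and the outline is sound.

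For the identification part your plan is exactly right: compute each $G$-orbit $\mathcal{O}_G(B')$ by letting the extra generators $\dot{x}_{i,j}(t)$ with $(i,j)\in\ddot{J}$ act via $A\mapsto\pi(A\,\dot{x}_{i,j}(t)^{-\top})$, and match the resulting set of patterns with the explicit $\mathcal{S}_B$ of \ref{notation:supermodules-3D4}. This is a short direct computation in each family.

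For irreducibility your inner-product strategy also works, but one sentence is overstated. Non-degeneracy of $\kappa_q|_{V\times V}$ by itself does not force $\langle\chi_B,\chi_D\rangle_{\mathrm{Stab}_G(B,D)}=0$ for $D\neq B$; what you actually need is that for each $D\in\mathcal{O}_G(B)\setminus\{B\}$ there exists $g\in\mathrm{Stab}_G(B,D)$ with $\kappa_q(B-D,f(g))\neq 0$. That is a genuine case-by-case check (of the same flavour as the proof of \ref{3D4-orth.}), and it uses the explicit description of $\mathrm{Stab}_G(B)$ you will already have computed for the identification step. Once those stabilizers are in hand the character sums collapse as you expect, so this is only a minor tightening of the argument rather than a gap.
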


\begin{Notation}\label{set of supercharacters-3D4}
 Let $M\in \mathcal{M}$,
 and let the complex character
of the $\mathbb{C}U$-module $M$ be denoted by $\Psi_M$.
We set
 $ \mathcal{X}:=\{\Psi_M \mid M\in \mathcal{M}\}$.
\end{Notation}

\begin{Corollary}
Let $A=(A_{ij})\in V$, and $\psi_A$ be the character of $\mathbb{C}\mathcal{O}_U([A])$.
Then
 \begin{alignat*}{2}
 &\Psi_{M(A_{12}e_{12}+A_{23}e_{23})}= \psi_{A_{12}e_{12}+A_{23}e_{23}},\\
 &\Psi_{M(A_{13}^*e_{13}+\bar{A}_{12}^{A_{13}^*} e_{12})}
                        = {\psi_{\bar{A}_{12}^{A_{13}^*}e_{12}+A_{13}^*e_{13}}},
& \quad
 &\Psi_{M({A_{15}^*}^qe_{14}+A_{15}^*e_{15})}
                         = \sum_{A_{23}\in \mathbb{F}_q}
                           {\psi_{A_{23}e_{23}+{A_{15}^*}^qe_{14}+A_{15}^*e_{15}}},\\
 &\Psi_{M(A_{16}^*e_{16})}= \sum_{\substack{A_{13}\in \mathbb{F}_{q^3}\\A_{23}\in \mathbb{F}_q}}
                           {\psi_{A_{13}e_{13}+A_{23}e_{23}+A_{16}^*e_{16}}},
&\quad
 &\Psi_{M(A_{17}^*e_{17})}= \sum_{A_{12}\in \mathbb{F}_{q^3}}
                           {\psi_{A_{12}e_{12}+A_{17}^*e_{17}}}.
 \end{alignat*}
\end{Corollary}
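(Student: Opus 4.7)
The plan is to observe that this Corollary is essentially immediate from the definitions laid down in \ref{notation:supermodules-3D4}, combined with the elementary fact that the character of a direct sum of $\mathbb{C}U$-modules is the sum of the characters of the summands.

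In detail, I would proceed case by case through the five supermodules in $\mathcal{M}$. For the two supermodules $M(A_{12}e_{12}+A_{23}e_{23})$ and $M(A_{13}^*e_{13}+\bar{A}_{12}^{A_{13}^*}e_{12})$, the definitions in \ref{notation:supermodules-3D4} identify $M$ with a single $U$-orbit module $\mathbb{C}\mathcal{O}_U([A])$, so $\Psi_M=\psi_A$ by the very definition of $\psi_A$ given just before \ref{3D4-orth.}. For the remaining three supermodules $M({A_{15}^*}^qe_{14}+A_{15}^*e_{15})$, $M(A_{16}^*e_{16})$ and $M(A_{17}^*e_{17})$, the same definition \ref{notation:supermodules-3D4} exhibits $M$ as an explicit internal direct sum $\bigoplus_i \mathbb{C}\mathcal{O}_U([A_i])$, with index set $A_{23}\in \mathbb{F}_q$, or $(A_{13},A_{23})\in \mathbb{F}_{q^3}\times \mathbb{F}_q$, or $A_{12}\in \mathbb{F}_{q^3}$ respectively. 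Passing to characters of a direct sum then yields $\Psi_M=\sum_i \psi_{A_i}$, which is precisely the formula claimed.

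There is no genuine obstacle at this stage: the substantive work has already been absorbed upstream, namely in the classification of $U$-orbit modules \ref{prop:class orbit-3D4} (which guarantees that the orbit modules appearing in each direct-sum decomposition correspond to pairwise distinct orbit representatives, so that the summations are well-defined and really decompose $M$) and in the pattern-shape analysis of \ref{notation:supermodules-3D4} (which verifies that the listed index sets are the correct summation ranges). The only point worth double-checking when writing this up is that the parameter conventions for $\bar{A}_{12}^{A_{13}^*}\in T^{A_{13}^*}$ and for the starred entries in $\mathbb{F}_{q^3}^*$ versus $\mathbb{F}_q^*$ match those used throughout Section \ref{sec:U-orbit modules-3D4}, so that the orbit-module labels appearing on the right-hand side of each identity are unambiguously defined; once this is confirmed, the Corollary follows in a single line from the definitions.
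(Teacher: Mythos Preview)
Your proposal is correct and matches the paper's treatment: the paper states this Corollary without any proof, treating it as an immediate consequence of the direct-sum decompositions in \ref{notation:supermodules-3D4} together with the definition of $\Psi_M$ in \ref{set of supercharacters-3D4}. Your observation that the character of a direct sum is the sum of the characters is exactly the one-line justification implicit in the paper.
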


\begin{Proposition}[Supercharacter theory for ${{^3D}_4^{syl}}(q^3)$]\label{supercharacter theory-3D4}
$(\mathcal{X},\mathcal{K})$ is a supercharacter theory for ${{^3D}_4^{syl}}(q^3)$,
where $\mathcal{K}$ is defined in \ref{superclass:ci ti-3D4},
and $\mathcal{X}$ is defined in \ref{set of supercharacters-3D4}.
\end{Proposition}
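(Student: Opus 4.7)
The plan is to verify the four conditions of Definition/Lemma \ref{supercharacter theory}: $|\mathcal{X}| = |\mathcal{K}|$, each $\Psi_M$ is constant on every member of $\mathcal{K}$, the elements of $\mathcal{X}$ are pairwise orthogonal, and $\{I_8\} \in \mathcal{K}$. The last is immediate from $C_0 = \{I_8\} \in \mathcal{K}$ in \ref{superclass:ci ti-3D4}. A direct count over the five families of $\mathcal{M}$ in \ref{notation:supermodules-3D4} and over the superclass types in \ref{superclass:ci ti-3D4} gives
\[ |\mathcal{X}| \;=\; q^4 + (q^3-1)q^2 + (q^3-1) + 2(q-1) \;=\; 1 + 3(q-1) + (q^3-1)(q^2+q+1) \;=\; |\mathcal{K}|. \]

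For pairwise orthogonality I would invoke Proposition \ref{3D4-orth.}. By \ref{notation:supermodules-3D4} every $M \in \mathcal{M}$ is a direct sum of $U$-orbit modules whose patterns share a common first verge, namely $A_{12}e_{12}$ in $\mathfrak{F}_{1,2}$, $A_{13}^*e_{13}$ in $\mathfrak{F}_3$, $A_{15}^*e_{15}$ in $\mathfrak{F}_4$, $A_{16}^*e_{16}$ in $\mathfrak{F}_5$, and $A_{17}^*e_{17}$ in $\mathfrak{F}_6$. Hence any two distinct $M, M' \in \mathcal{M}$ lying in different families, or within the same one of $\mathfrak{F}_3$--$\mathfrak{F}_6$ with distinct starred parameter $A_{13}^*$, $A_{15}^*$, $A_{16}^*$, or $A_{17}^*$, have constituent orbits with distinct first verges and \ref{3D4-orth.}(1) yields $\langle \Psi_M, \Psi_{M'}\rangle_U = 0$. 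The two residual subcases---two modules of $\mathfrak{F}_{1,2}$ sharing the same $A_{12}$, and two modules of $\mathfrak{F}_3$ sharing the same $A_{13}^*$---are dispatched by \ref{3D4-orth.}(2) and \ref{3D4-orth.}(3), which assert that the respective hook-separated staircase modules are irreducible and pairwise orthogonal.

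The main obstacle is the constancy of each $\Psi_M$ on every $C_u^U$. By \ref{notation:supermodules and G-module-3D4}, every $M$ arises as $\mathrm{Res}^G_U \widetilde{M}$ where $\widetilde{M}$ is an explicit direct sum of irreducible $G$-orbit modules. Since $C_u^U = C_u^G \cap U$ by \ref{3D4,biorbit-U}, it suffices to show that the character $\widetilde{\Psi}$ of $\widetilde{M}$ is constant on every $G$-biorbit $C_g^G = I_8 + G(g-I_8)G$. This is the Diaconis-Isaacs-style superclass property for the algebra group $G = G_8(q^3)$: concretely I would expand $\widetilde{\Psi}(g) = \sum_B \mathbf{1}_{B.g = B}\, \vartheta \kappa_q(B, f(g))$ over the orbits constituting $\widetilde{M}$, apply a substitution $g - I_8 = x(h-I_8)y$ with $x, y \in G$, change variables via $B \mapsto \pi(y^\top B x^\top)$, and use the non-degeneracy of $\kappa_q|_{V \times V}$ from \ref{kappa q, nond, 3D4} together with \ref{f(x)g,3D4} to match $\widetilde{\Psi}(g)$ with $\widetilde{\Psi}(h)$. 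The aggregation over $A_{23}$ in $\mathfrak{F}_4$ and $\mathfrak{F}_5$, which is baked into the definition of $\widetilde{M}$, is exactly what is required so that orthogonality of additive characters of $\mathbb{F}_q$ and $\mathbb{F}_{q^3}$ collapses the residual sums to a biorbit invariant; this aggregation step is where the main technical work will concentrate. Once constancy has been established the four conditions hold, and evaluating $\Psi_M$ at the superclass representatives of Table \ref{table:partition-3D4} yields the supercharacter table \ref{table:supercharacter table-3D4}.
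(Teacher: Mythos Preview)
Your treatment of conditions (a), (c), and (d) matches the paper's: the same counting, the same appeal to Proposition~\ref{3D4-orth.} for orthogonality, and the trivial observation $C_0=\{I_8\}\in\mathcal{K}$.

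Where you diverge is condition (b), constancy on $\mathcal{K}$. The paper proves this by direct computation: for each $\Psi_M$ and each superclass type it writes $\Psi_M(y)=\sum_{C:\,C.y=C}\chi_C(y)$, reads off which $y$ lie in $\mathrm{Stab}_U(C)$ from Proposition~\ref{prop: 3D4-stab}, and evaluates the resulting character sums explicitly, producing Table~\ref{table:supercharacter table-3D4} along the way. Your proposed route via the $G$-module $\widetilde M$ is different and, as written, has two genuine gaps.

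First, the reduction ``since $C_u^U=C_u^G\cap U$, it suffices to show $\widetilde\Psi$ is constant on every $C_g^G$'' is not sufficient. The partition $\mathcal{K}$ is \emph{coarser} than $\{C_u^U\}$: by \ref{superclass:ci ti-3D4} the class $C_2(t_2^*)$ is the union of $q^3-1$ sets $C^U_{x_2(t_2^*)x_4(t_4^*)}$ and $q$ sets $C^U_{x_2(t_2^*)x_5(t_5)}$, and these sit in pairwise distinct $G$-biorbits. So even if every $\widetilde\Psi$ were constant on each $C_g^G$, you would still need a separate argument that the values agree across all of these pieces. The paper's explicit computation on $C_2(t_2^*)$ shows exactly this (the value is~$0$ for the families $\mathfrak{F}_3$--$\mathfrak{F}_6$), and this is precisely where the ``aggregation over $A_{23}$'' matters; but that aggregation does not fall out of the biorbit reduction you described.

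Second, the change of variables $B\mapsto \pi(y^\top B x^\top)$ only yields constancy on $C_g^G$ if the index set of $B$'s appearing in $\widetilde M$ is stable under the \emph{left} $G$-action as well as the right one---i.e.\ if $\widetilde M$ is assembled from two-sided $G$-orbits in $V$, which is the content of the Diaconis--Isaacs construction. A single right $G$-orbit module is generally not constant on $G$-biorbits, and you have not checked that the specific sums in \ref{notation:supermodules and G-module-3D4} are left-$G$-stable. This can be verified case by case, but at that point you are essentially doing the paper's direct computation in different clothing.
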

\begin{proof}
 By \ref{superclass:ci ti-3D4}, $\mathcal{K}$ is a partition of $U$.
 We know  $\mathcal{X}$ is a set of nonzero complex characters of $U$.
\begin{itemize}
\setlength\itemsep{0em}
 \item [(a)] {\it{Claim that $|\mathcal{X}|=|\mathcal{K}|$}}.
By \ref{superclass:ci ti-3D4}, \ref{notation:supermodules-3D4} and \ref{set of supercharacters-3D4},
$|\{\Psi_{M{(A_{17}^*e_{17})}} \mid A_{17}^*\in \mathbb{F}_q^* \}|
=|\{{M{(A_{17}^*e_{17})}} \mid  A_{17}^*\in \mathbb{F}_q^*\}|
=|\{C_6(t_6^*) \mid t_6^* \in \mathbb{F}_q^* \}|$.
Similarly, we obtain $|\mathcal{X}|=|\mathcal{K}|$.
 \item [(b)] {\it Claim that the characters $\chi \in \mathcal{X}$ are constant on
          the members of $\mathcal{K}$}.
 Let $A \in \mathfrak{F}_4$ and
 \begin{align*}
 \mathcal{B}_{15}(A_{15}^*):=
{ \left\{
 {\newcommand{\mc}[3]{\multicolumn{#1}{#2}{#3}}
\begin{array}{cccccccc}\cline{2-7}
\mc{1}{c|}{ × }
& \mc{1}{c|}{C_{12}}
& \mc{1}{c|}{C_{13}}
& \mc{1}{c|}{{{A_{15}^*}^q}}
& \mc{1}{c|}{{A_{15}^*}} & \mc{1}{c|}{×} & \mc{1}{c|}{×} &  × \\\cline{2-7}
× & \mc{1}{c|}{ × } & \mc{1}{c|}{C_{23}} & × & × & × &  ×  & ×\\\cline{3-3}
\end{array}}
{\, }\middle|{\, }
{\left\{\begin{array}{l}
C_{12},C_{13}\in \mathbb{F}_{q^3}\\
C_{23}\in \mathbb{F}_{q}
\end{array}\right.}
\right\}
}.
\end{align*}
Let $y\in U$.
Then we have that
 \begin{align*}
 \Psi_{M({A_{15}^*}^qe_{14}+A_{15}^*e_{15})}(y)=
     \sum_{\substack{C\in \mathcal{B}_{15}(A_{15}^*)\\C.y=C}}{\chi_C(y)}
 =\sum_{\substack{C\in \mathcal{B}_{15}(A_{15}^*)\\ y\in \mathrm{Stab}_U(C)}}{\chi_C(y)}.
 \end{align*}
 If $y=x(0,0,0,t_4,t_5,t_6)\in C_0\cup C_4(t_4^*)\cup C_5(t_5^*)\cup C_6(t_6^*) \subseteq \mathcal{K}$,
then
 $y\in \mathrm{Stab}_U(C)$ for all $C\in \mathcal{B}_{15}(A_{15}^*)$ by \ref{prop: 3D4-stab}.
 Thus
\begin{align*}
\Psi_{M({A_{15}^*}^qe_{14}+A_{15}^*e_{15})}(y)=&
               \sum_{C\in \mathcal{B}_{15}(A_{15}^*)}{\chi_C(y)}
               =q^7\cdot
               {\vartheta\pi_q({A_{15}^*}^qt_4+{A_{15}^*}t_4^{q^2})}.
\end{align*}
If $y\in C_{1,2}(t_1^*,t_2^*)\cup C_{1,3}(t_1^*,{\bar{t}}_3^{t_1^*})\cup C_3(t_3^*) \subseteq \mathcal{K}$,
then
 $y \notin \mathrm{Stab}_U(C)$ for all $C\in \mathcal{B}_{15}(A_{15}^*)$ by \ref{prop: 3D4-stab}.
 Thus
$
\Psi_{M({A_{15}^*}^qe_{14}+A_{15}^*e_{15})}(y)=0$.

If $y=x(0,t_2^*,s_3,s_4,s_5,s_6)\in C_2(t_2^*) \subseteq \mathcal{K}$,
then by \ref{prop: 3D4-stab} we have that
\begin{align*}
&   \Psi_{M({A_{15}^*}^qe_{14}+A_{15}^*e_{15})}(y)
=  \sum_{\substack{C\in \mathcal{B}_{15}(A_{15}^*)
              \\{A_{15}^*}^qs_3^q+A_{15}^*s_3^{q^2}+C_{13}t_2^*=0}}{\chi_C(y)}\\
= & \sum_{\substack{C_{12}\in \mathbb{F}_{q^3}\\C_{23}\in \mathbb{F}_{q}}}{
   \vartheta \kappa_q\left({\,}
{%
\newcommand{\mc}[3]{\multicolumn{#1}{#2}{#3}}
\begin{array}{c|c|cccc}\hline
\mc{1}{|c|}{C_{12}}
& -\frac{{A_{15}^*}^qs_3^{q}+A_{15}^*s_3^{q^2}}{t_2^*}
& \mc{1}{c|}{{A_{15}^*}^q} & \mc{1}{c|}{{A_{15}^*}} & \mc{1}{c|}{×} & \mc{1}{c|}{×}\\\hline
× & C_{23} & × & × & × & ×\\\cline{2-2}
\end{array}
}%
,{\,}
{%
\newcommand{\mc}[3]{\multicolumn{#1}{#2}{#3}}
\begin{array}{c|c|cccc}\hline
\mc{1}{|c|}{0} & -s_3 & \mc{1}{c|}{s_4}
& \mc{1}{c|}{s_4^{q^2}} & \mc{1}{c|}{*} & \mc{1}{c|}{*}\\\hline
× & t_2^* & × & × & × & ×\\\cline{2-2}
\end{array}
}%
{\,}\right)}\\
= & q^3\cdot {\vartheta\pi_q\Big(\frac{{A_{15}^*}^qs_3^{q+1}+A_{15}^*s_3^{q^2+1}}{t_2^*}
                                  +{A_{15}^*}^qs_4+{A_{15}^*}s_4^{q^2}\Big)}
        \cdot \sum_{C_{23}\in \mathbb{F}_{q}}{
                         \vartheta\pi_q(C_{23}t_2^*)}
= 0.
\end{align*}
Similarly, we calculate the other values of the Table \ref{table:supercharacter table-3D4}.
Thus the claim is proved.
 \item [(c)] The elements of $\mathcal{X}$ are pairwise orthogonal by \ref{3D4-orth.}.
 \item [(d)] The set $\{I_8\}$ is a member of $\mathcal{K}$.
\end{itemize}
Therefore,
 $(\mathcal{X},\mathcal{K})$ is a supercharacter theory for ${{^3D}_4^{syl}}(q^3)$.
\end{proof}
\begin{sidewaystable}
\caption{Supercharacter table of ${{^3D}_4^{syl}}(q^3)$}
\label{table:supercharacter table-3D4}
{
\footnotesize
\begin{align*}
\renewcommand\arraystretch{1.5}
\begin{array}{l|cccccccc}
×
& C_0
& C_1(t_1^*, {\bar{t}}_3^{t_1^*})
& C_2(t_2^*)
& C_{1,2}(t_1^*,t_2^*)
& C_3(t_3^*)
& C_4(t_4^*)
& C_5(t_5^*)
& C_6(t_6^*)\\
\hline
\Psi_{M{(0)}}
& 1 & 1 & 1 & 1 & 1 & 1 & 1 & 1\\
\Psi_{M{(A_{12}^*e_{12})}}
& 1
& \vartheta \pi_q (A_{12}^*t_1^*)
& 1
& \vartheta \pi_q (A_{12}^*t_1^*)
& 1 & 1 & 1 & 1 \\
\Psi_{M{(A_{23}^*e_{23})}}
& 1
& 1
& \vartheta (A_{23}^*t_2^*)
& \vartheta (A_{23}^*t_2^*)
& 1 & 1 & 1 & 1 \\
\Psi_{M{(A_{12}^*e_{12}+A_{23}^*e_{23})}}
& 1
& \vartheta \pi_q (A_{12}^*t_1^*)
& \vartheta (A_{23}^*t_2^*)
& \begin{array}{l}
\vartheta \pi_q (A_{12}^*t_1^*)\\
\cdot \vartheta (A_{23}^*t_2^*)
\end{array}
& 1 & 1 & 1 & 1
\\
\Psi_{M{(A_{13}^*e_{13}+\bar{A}_{12}^{A_{13}^*} e_{12})}}
& q
& \begin{smallmatrix}
   \vartheta \pi_q(\bar{A}_{12}^{A_{13}^*}t_1^*-\bar{t}_3^{t_1^*}A_{13}^*)\\
   \cdot \sum_{r_2\in \mathbb{F}_{q}}\vartheta \pi_q( -A_{13}^*t_1^*r_2)
  \end{smallmatrix}
& 0 & 0
& \begin{array}{l}
\vartheta\pi_q(-A_{13}^*t_{3}^*)\\
  \cdot q
  \end{array}
& q & q & q
\\
\Psi_{M{({A_{15}^*}^qe_{14}+A_{15}^*e_{15})}}
& q^7
& 0 & 0 & 0
& 0
& \begin{array}{l}
\vartheta\pi_q({A_{15}^*}^qt_4^*)\\
\cdot \vartheta\pi_q({A_{15}^*}{t_4^*}^{q^2})\\
\cdot  q^7
  \end{array}
& q^7 & q^7
\\
 \Psi_{M{(A_{16}^*e_{16})}}
& q^{10}
& 0 & 0 & 0
& 0
& 0
& \begin{array}{l}
  \vartheta(A_{16}^*t_{5}^*)\\
  \cdot q^{10}
  \end{array}
& q^{10}
\\
\Psi_{M{(A_{17}^*e_{17})}}
& q^{10}
& 0 & 0 & 0
& 0 & 0 & 0
& \begin{array}{l}
  \vartheta(A_{17}^*t_{6}^*)\\
  \cdot q^{10}
  \end{array}
\end{array}
\end{align*}
}
\end{sidewaystable}

\begin{Corollary}
By \ref{supercharacter theory-3D4},
the modules of $\mathcal{M}$ (see \ref{notation:supermodules-3D4}) are $\mathbb{C}G$-supermodules,
and
the number of the supercharacters of ${^3{D}_4^{syl}(q^3)}$ is
\begin{align*}
 & |\mathcal{X}|=|\mathcal{M}|=|\mathcal{K}|
= q^5+q^4+q^3-q^2+2q-3\\
=& (q-1)^5+6(q-1)^4+15(q-1)^3+18(q-1)^2+12(q-1)+1.
\end{align*}
\end{Corollary}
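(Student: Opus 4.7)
The corollary has two parts, both of which I would handle with essentially no new work. For the supermodule claim, I would just unwind definitions: by Notation \ref{set of supercharacters-3D4}, each $M \in \mathcal{M}$ affords the character $\Psi_M$, and by Proposition \ref{supercharacter theory-3D4} the set $\{\Psi_M \mid M \in \mathcal{M}\} = \mathcal{X}$ consists of supercharacters of the supercharacter theory $(\mathcal{X},\mathcal{K})$ for $U$; hence each $M$ is a supermodule in the sense of Definition/Lemma \ref{supercharacter theory}. The statement writes $\mathbb{C}G$ rather than $\mathbb{C}U$; if that is to be read literally, the same argument applies to the $G$-orbit modules displayed in Lemma \ref{notation:supermodules and G-module-3D4}, whose restrictions to $U$ are the $M$'s.

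The substantive part is the count of $|\mathcal{K}|$, which I would carry out by enumerating the eight index families in Notation/Lemma \ref{superclass:ci ti-3D4}. The singleton $\{C_0\}$ contributes $1$; the three families $\{C_6(t_6^*)\}$, $\{C_5(t_5^*)\}$, $\{C_2(t_2^*)\}$ are each indexed by $\mathbb{F}_q^*$ and contribute $q-1$ each; the two families $\{C_4(t_4^*)\}$, $\{C_3(t_3^*)\}$ are each indexed by $\mathbb{F}_{q^3}^*$ and contribute $q^3-1$ each; $\{C_{1,2}(t_1^*,t_2^*)\}$ is indexed by $\mathbb{F}_{q^3}^* \times \mathbb{F}_q^*$, contributing $(q^3-1)(q-1)$; and $\{C_{1,3}(t_1^*,\bar{t}_3^{t_1^*})\}$ is indexed by pairs with $t_1^* \in \mathbb{F}_{q^3}^*$ and $\bar{t}_3^{t_1^*} \in T^{t_1^*}$, where $|T^{t_1^*}| = q^2$ by Notation \ref{notation:Ta}, contributing $(q^3-1)q^2$. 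Summing gives
\[
|\mathcal{K}| = 1 + 3(q-1) + 2(q^3-1) + (q^3-1)(q-1) + (q^3-1)q^2 = q^5 + q^4 + q^3 - q^2 + 2q - 3,
\]
and the equalities $|\mathcal{X}| = |\mathcal{M}| = |\mathcal{K}|$ are already part of step (a) in the proof of Proposition \ref{supercharacter theory-3D4}.

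For the alternative $(q-1)$-expansion, I would substitute $r = q-1$ and expand $(r+1)^5 + (r+1)^4 + (r+1)^3 - (r+1)^2 + 2(r+1) - 3$ by the binomial theorem, verifying term-by-term that it equals $r^5 + 6r^4 + 15r^3 + 18r^2 + 12r + 1$. There is essentially no obstacle anywhere in this corollary; the only mildly non-obvious point is to remember that $|T^{t_1^*}| = q^2$ rather than $q^3 - 1$, since $T^{t_1^*}$ is a transversal of $t_1^*\mathbb{F}_q^+$ in $\mathbb{F}_{q^3}^+$ and that subgroup has index $q^2$.
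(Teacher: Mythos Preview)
Your proposal is correct and is essentially the approach the paper intends: the corollary is stated without proof, as an immediate consequence of Proposition \ref{supercharacter theory-3D4} together with the enumeration of the index sets in Table \ref{table:partition-3D4} and Notation/Lemma \ref{superclass:ci ti-3D4}. Your explicit tally of the eight families and the binomial verification of the $(q-1)$-expansion simply spell out what the paper leaves to the reader; your remark about the $\mathbb{C}G$/$\mathbb{C}U$ notation is also apt, since the ``$G$'' in the corollary is the generic finite-group placeholder from Definition/Lemma \ref{supercharacter theory} rather than $G_8(q^3)$.
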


In the following proposition, we use the notations of \cite{sun2016arxiv}.
\begin{Proposition}[Supercharacters and irreducible characters]\label{relation: superchar. and irr.-3D4}
The following relations between supercharacters and irreducible characters
of ${{^3D}_4^{syl}}(q^3)$ are obtained.
\begin{flalign*}
&\Psi_{M{(A_{17}^*e_{17})}}= q^3 \sum_{A_{12} \in \mathbb{F}_{q^3}}{\chi_{6,q^4}^{A_{17}^*,A_{12}}},
& \qquad
&\Psi_{M{(A_{16}^*e_{16})}}= q^3 \sum_{\substack{A_{23}\in \mathbb{F}_{q}\\A_{13} \in \mathbb{F}_{q^3}}}
                             {\chi_{5,q^3}^{A_{16}^*,A_{23},A_{13}}},\\
&\Psi_{M{({A_{15}^*}^qe_{14}+A_{15}^*e_{15})}}= q^3 \sum_{A_{23}\in \mathbb{F}_{q}}
                                               {\chi_{4,q^3}^{A_{15}^*,A_{23}}},
& \qquad
&\Psi_{M{(A_{13}^*e_{13}+\bar{A}_{12}^{A_{13}^*} e_{12})}}
                            =\chi_{3,q}^{A_{13}^*,\bar{A}_{12}^{A_{13}^*}},\\
&\Psi_{M{(A_{12}e_{12}+A_{23}e_{23})}}= \chi_{lin}^{A_{12},A_{23}}.
\end{flalign*}
\end{Proposition}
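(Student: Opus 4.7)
The plan is to exploit the description of the supermodules in $\mathcal{M}$ as direct sums of $U$-orbit modules given in \ref{notation:supermodules-3D4}, together with the classification of hook-separated staircase modules in \ref{3D4-orth.} and the explicit list of irreducible characters of ${{^3D}_4^{syl}}(q^3)$ from \cite{sun2016arxiv}. For each of the five families I would handle the irreducible ones first and then reduce the reducible cases to character computations via the supercharacter table.

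First, for $\Psi_{M(A_{12}e_{12}+A_{23}e_{23})}$ and $\Psi_{M(A_{13}^*e_{13}+\bar{A}_{12}^{A_{13}^*}e_{12})}$, the relevant orbit modules already have dimensions $1$ and $q$ respectively, and by \ref{3D4-orth.} they are irreducible. Thus it suffices to match their characters against the linear characters $\chi_{lin}^{A_{12},A_{23}}$ and the $q$-dimensional characters $\chi_{3,q}^{A_{13}^*,\bar{A}_{12}^{A_{13}^*}}$ from \cite{sun2016arxiv}. I would evaluate both sides on a complete set of class representatives of $U$ (reading off the supercharacter table in Table \ref{table:supercharacter table-3D4} for the left-hand side and using the character tables of \cite{sun2016arxiv} for the right-hand side) and check equality entry by entry.

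For the three reducible families $\mathfrak{F}_4, \mathfrak{F}_5, \mathfrak{F}_6$, I would proceed by a dimension-and-orthogonality argument. Each $\Psi_M$ is constant on superclasses and its values on $C_0$ are $q^7$, $q^{10}$, $q^{10}$ respectively; the candidate decompositions on the right have the same total dimension as verified by a direct count: for $\Psi_{M(A_{17}^*e_{17})}$ one gets $q^3\cdot q^3\cdot q^4=q^{10}$, for $\Psi_{M(A_{16}^*e_{16})}$ one gets $q^3\cdot q^4\cdot q^3=q^{10}$, and for $\Psi_{M({A_{15}^*}^qe_{14}+A_{15}^*e_{15})}$ one gets $q^3\cdot q\cdot q^3=q^{7}$. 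Then for each fixed $M$ I would compute the inner product $\langle \Psi_M,\chi\rangle_U$ against an irreducible character $\chi$ from \cite{sun2016arxiv}, using \ref{notation:supermodules-3D4} to reduce the computation to inner products of orbit module characters $\psi_A$ against $\chi$, and using the superclass values in Table \ref{table:supercharacter table-3D4}. The irreducibles that pair non-trivially with $\Psi_M$ are exactly those in the indicated family, each occurring with multiplicity $q^3$.

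The hard part will be the bookkeeping between the two parametrisations: the labels $(A_{17}^*,A_{12})$, $(A_{16}^*,A_{23},A_{13})$, and $(A_{15}^*,A_{23})$ used here in terms of entries of the pattern $A\in V$ must be identified with the parameters of $\chi_{6,q^4}^{A_{17}^*,A_{12}}$, $\chi_{5,q^3}^{A_{16}^*,A_{23},A_{13}}$ and $\chi_{4,q^3}^{A_{15}^*,A_{23}}$ from \cite{sun2016arxiv}; verifying that the indexing sets bijectively match (so that the sums on the right-hand sides are well defined and exhaust the constituents) is the main bookkeeping obstacle. A useful consistency check, which I would perform last, is that summing over all supercharacters yields the regular character: by \ref{supercharacter theory} applied to our supercharacter theory (\ref{supercharacter theory-3D4}) and the remark $\sum_{\chi\in\mathcal{X}}\sigma_\chi=\mathrm{reg}_U$, the multiplicity $q^3$ appearing on the right is forced once the correct family of irreducible constituents is identified, which simultaneously confirms the proposed decompositions.
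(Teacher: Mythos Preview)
Your proposal is correct in outline, but the paper takes a more direct route that avoids inner products entirely. Rather than computing $\langle \Psi_M,\chi\rangle_U$ for each irreducible $\chi$, the paper simply verifies the claimed identity as an equality of class functions: for each superclass $K$, evaluate the left side from the supercharacter table (Table~\ref{table:supercharacter table-3D4}) and the right side by summing the values of the relevant irreducible characters from \cite{sun2016arxiv} at an arbitrary element of $K$, and check they agree. For most superclasses this is immediate from the tables; the paper works out one nontrivial case in detail (showing that $q^3\sum_{A_{23}}\chi_{4,q^3}^{A_{15}^*,A_{23}}$ vanishes on every element of $C_2(t_2^*)$, matching the zero entry of the supercharacter table) and declares the rest ``similar''. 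Your approach via inner products would ultimately require the same data (superclass sizes, supercharacter values, and the full irreducible character table), just organised into a longer computation.

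One point in your final paragraph should be tightened: the regular-character identity $\sum_{\chi\in\mathcal{X}}\sigma_\chi=\mathrm{reg}_U$ does not by itself force the multiplicity $q^3$, since the constants $c$ in $\Psi_M=c\,\sigma_M$ may differ across supercharacters. The clean argument is that once the constituent set $\mathrm{Irr}(\Psi_M)$ is identified, supercharacter theory (\ref{supercharacter theory}) gives $\Psi_M=c\sum_{\psi}\psi(1)\psi$, and evaluating both sides at $1$ determines $c=1$ (e.g.\ for $\mathfrak{F}_4$: $q^7=c\cdot q\cdot (q^3)^2$). With that correction your strategy is sound; it is just more elaborate than the paper's direct pointwise comparison.
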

\begin{proof}
Take
$\Psi_{M{({A_{15}^*}^qe_{14}+A_{15}^*e_{15})}}= q^3 \sum_{A_{23}\in \mathbb{F}_{q}}
                                               {\chi_{4,q^3}^{A_{15}^*,A_{23}}}$
for example.
We must show that
\begin{align*}
\Psi_{M{({A_{15}^*}^qe_{14}+A_{15}^*e_{15})}}(u)= q^3 \sum_{A_{23}\in \mathbb{F}_{q}}
                                               {\chi_{4,q^3}^{A_{15}^*,A_{23}}(u)}
\quad \text{for all } u\in U.
\end{align*}
By  Table \ref{table:supercharacter table-3D4}
and the character table of ${{^3D}_4^{syl}}(q^3)$ (see \cite{sun2016arxiv}),
it is sufficient to prove that
\begin{align*}
0=\Psi_{M{({A_{15}^*}^qe_{14}+A_{15}^*e_{15})}}(u)= q^3 \sum_{A_{23}\in \mathbb{F}_{q}}
                                               {\chi_{4,q^3}^{A_{15}^*,A_{23}}(u)}
\qquad \text{for all } u\in C_2(t_2^*)\subseteq U.
\end{align*}
Let $u:=x(0,t_2^*,t_3,t_4,t_5,t_6)$.
Then
\begin{align*}
 & q^3 \sum_{A_{23}\in \mathbb{F}_{q}}{\chi_{4,q^3}^{A_{15}^*,A_{23}}(u)}\\
=& q^3 \sum_{\substack{A_{23}\in \mathbb{F}_{q}\\r_1\in \mathbb{F}_{q^3}}}
             \vartheta \pi_q
    ( A_{23}t_2^*
    +{A_{15}^*}^q(t_4-t_2^*r_1^{q+1}+r_1t_3^q+r_1^qt_3)
        +A_{15}^*(t_4^{q^2}-t_2^*r_1^{q^2+1}+r_1^{q^2}t_3+r_1t_3^{q^2}) )\\
=& 0.
\end{align*}
Thus $\Psi_{M{({A_{15}^*}^qe_{14}+A_{15}^*e_{15})}}= q^3 \sum_{A_{23}\in \mathbb{F}_{q}}
                                               {\chi_{4,q^3}^{A_{15}^*,A_{23}}}$
is proved.
Similarly, we get the other formulae.
\end{proof}

\begin{Comparison}[Supercharacters]
\label{com:supercharacter theory-3D4}
\begin{itemize}
\setlength\itemsep{0em}
\item [(1)]
Supercharacters of $A_n(q)$
are the characters of staircase $A_n(q)$-modules,
which are called \textbf{Andr\'{e}-Yan supercharacters}
(see \cite[2.2.27]{Markus1}).
\item [(2)]
Supercharacters of $D_n^{syl}(q)$
are the characters of the sums of all hook-separated staircase $D_n^{syl}(q)$-modules with the same verge
(see \cite[3.6.13 and 3.6.16]{Markus1}
and \cite[page 1278]{an2}),
which are called \textbf{Andr\'{e}-Neto supercharacters}.
\item [(3)]
Supercharacters of ${^3}D_4^{syl}(q^3)$
of families $\mathfrak{F}_4$, $\mathfrak{F}_5$ and $\mathfrak{F}_6$
are the characters of the sums of all hook-separated staircase ${^3}D_4^{syl}(q^3)$-modules
with the same 1st verge
(see \ref{notation:supermodules-3D4} and \ref{supercharacter theory-3D4}).
\end{itemize}
\end{Comparison}


\section*{Acknowledgements}
This paper is part of my PhD thesis \cite{sunphd} at the University of Stuttgart, Germany,
so I am deeply grateful to my supervisor Richard Dipper.
I also would like to thank  Markus Jedlitschky,
Mathias Werth
and Yichuan Yang for the
helpful discussions and valuable suggestions.


\bibliographystyle{abbrv}
\bibliography{bibliography3D4supercharacter}

\end{document}